\numberwithin{equation}{section}
	\theoremstyle{definition}
    \newtheorem{remark}{Remark}[section]
	\theoremstyle{theorem}
    \newtheorem{corollary}{Corollary}[section]
	\newtheorem{theorem}{Theorem}[section]
    \newtheorem{lemma}{Lemma}[section]
    \newtheorem{proposition}{Proposition}[section]  
	\theoremstyle{remark}
\newcommand\numberthis{\addtocounter{equation}{1}\tag{\theequation}}
\newcommand{\R}{\ensuremath{\mathbb{R}}}
\NewDocumentCommand{\integ}{m m O{} D<>{}}{
\IfNoValueTF{#3}{\ensuremath{\displaystyle \int_{#2} #4 \, \mathrm{d}#1}}
{\ensuremath{\displaystyle \int_{#2}^{#3} #4 \, \mathrm{d}#1}}
}
\let\RealFrac\frac
\NewDocumentCommand\f{mg}{%
  \IfNoValueTF{#2}
    {\RealFrac{1}{#1}}
    {\RealFrac{#1}{#2}}%
}
\newcommand{\mathsout}[1]% will main tech
	\title{
		{Existence and Uniqueness of Fast Traveling Pulses in Singularly Perturbed Nonlocal Neural Fields With Heaviside Nonlinearities: a Complete Proof}\\
		%{\includegraphics{university.jpg}}
	}
	\author{Alan Dyson\footnote{(AlanDyson.Research@gmail.com)}}
	\date{\today}
\begin{document}
\newcounter{count}
\setcounter{count}{1}
\newcommand{\Ecount}{\thecount\stepcounter{count}}

\maketitle
%Abstract
\abstract 
We rigorously prove the existence and uniqueness of fast traveling pulse solutions to the singularly perturbed neural field system with linear feedback and Heaviside nonlinearity structure within a spatial convolution. Although a long-standing open problem, the pulse is well-accepted to often exist based on its original singular construction, closed form when it exists, and follow-ups, but prior to this study, there has not been a proof that overcomes the difficulties of (i) solving for the fast speed and width functions using the implicit function theorem at $\epsilon=0$ and (ii) tracking the resultant formal homoclinic orbit near its singular orbit during fast, slow, and mixed time scales. First, we provide new, first-order approximations of the pulse speed and width. We then show that the formal pulse is close to the front and back at threshold crossing points and that the Hausdorff distance between the formal pulse and the singular homoclinic orbit converges to zero, demonstrating that the formal pulse is a true solution to the original system. Broadly, our methods provide insight into nonlocal geometric singular perturbation theory.\\ \\
\noindent {\bf Key words. }neural field equations, traveling pulse, singular perturbation theory, Heaviside firing rate, existence and uniqueness\\ \\
\noindent {\bf AMS subject classifications.} 34E15, 45J05, 92C20

\section{Introduction}\label{Section: Intro}
In mammals, cortical waves are an intriguing and observable pattern, both in normal function and pathology. For example, wave propagation is a prominent phenomena seen in higher order sensory and motor cortical processing, believed to encode plasticity and other hierarchical sequencing \cite{sato2022cortical}. This occurs in the visual cortex via feedforward and feedback traveling waves \cite{aggarwal2022v}. By combining high-resolution imaging techniques like fMRI and MEG-EEG, a synergistic approach can provide insight into the connection between retinotopic directional mappings, hemodynamics, and vision-related motor tasks \cite{zanos2015sensorimotor,aquino2012hemodynamic,grabot2025traveling}.
Traveling waves have also been proven to play a pivotal role in cortical organization such as in working memory \cite{luo2025traveling}. In vitro, neural slices have also shown evidence of recorded propagation, often with techniques to block inhibition \cite{avoli2016models,pavan2022vitro}. Sensory processing pathologies such as migraines \cite{o2021migraine} and epilepsy \cite{martinet2017human} are also known to be related to pulse propagation.

Due to a tractability challenge in neuroscience, neural fields are a well-studied macroscopic model that portray the spatial and temporal interactions between patches of neurons as nonlocal integro-differential equations. Originally, the neural field equations were modeled by Wilson and Cowan \cite{WilsonCowan1972} with separate excitatory and inhibitory network populations. Amari \cite{Amari1977} justified a scalar equation of mixed populations with firing rates encoded within integrals of synaptic interactions. Models with one spatial dimension are justified, based on the layering structure of the cortex and practical recording methods. Voltage activity is then measured as spatial and temporal averages with spike times neglected. Efforts have been made to fit Wilson-Cowan and Amari models to data \cite{freestone2011data,rule2019neural,kofinas2023latent}.

With further simplification, Amari introduced the so-called ``Heaviside" activation approach that supplements the usual sigmoidal models with a more tractable alternative. By exploiting the Heaviside structure, Amari showed bumps, oscillations, and traveling waves exist when synaptic coupling kernels take on Mexican hat shapes. Since then, there have been many follow-ups based on Amari's Heaviside assumption.

As a scalar equation, the model takes on the form \cite{Amari1977,ExistenceandUniqueness-ErmMcLeod} 
\begin{equation} \label{eq:	 intro_scalar}
u_t=-u+\integ{y}{\R}[]<K(x-y)H(u(y,t)-\theta)>.
\end{equation}
In \eqref{eq: intro_scalar}, $u=u(x,t)$ represents average voltage at position $x$ and time $t$; $K$ is a synaptic coupling kernel, representing homogeneous spatial connectivity strength between presynaptic neural patches at position $y$ with postsynaptic neural patches at position $x$. The parameter $\theta>0$ is the threshold for neurons at $y$ to fire. We ignore previously studied spatial and feedback delays \cite{HuttZhang-TravelingWave,LijZhangSolo,Atay2004} for technical reasons, though there's no obvious reason why they would alter our approach. There are a variety of other modifications of \eqref{eq: intro_scalar} in the literature, but \eqref{eq: intro_scalar} can be categorized as the standard nonlocal Heaviside model without external currents, synaptic depression, stochastic noise, or other nonlinearities.

Due to a Heaviside firing rate, neural patches communicate in an all-or-none manner, justified statistically (and mathematically \cite{Burlakov_etal2025}) as an average of sigmoidals or the limit of steep sigmoidals. While sigmoidal firing rates are more physically appropriate, the Heaviside function provides a favorable mathematical trade-off.

Incorporating more realistic linear feedback such as spike frequency adaptation, Pinto and Ermentrout also studied traveling and standing waves in the singularly perturbed system \cite{pinto2001spatially,PintoandErmentrout-SpatiallyStructuredActivityinSynapticallyCoupledI.TravelingWaves}
\begin{align}
u_t&=-u-q+ \integ{y}{\R}[]<K(x-y)H(u(y,t)-\theta)>, \label{eq: intro_system_1}\\
q_t &=\epsilon (u-\gamma q). \label{eq: intro_system_2}
\end{align}
Here, $0<\epsilon \ll 1$, the function $q$ is a slow linear term, and $\gamma>0$ is a decay constant. Equation \eqref{eq: intro_scalar}, system \eqref{eq: intro_system_1}--\eqref{eq: intro_system_2}, and variations have been studied extensively, leading to vast dynamical systems findings. See \cite{cowan2014personal,ermentrout2010mathematical, bressloff2014waves_wholebook,coombes2014neural,cook2022neural} for background and more historical context.

With traveling waves as our focus, we introduce the traveling coordinate $z=x+c t$ and look for solutions to \eqref{eq: intro_system_1}--\eqref{eq: intro_system_2} of the form $\allowbreak (u(x,t),q(x,t))=(U(z),Q(z))$. Moreover, this solution is defined as a fast traveling pulse if there exists $a=O\left(\f{\epsilon}\right)$ such that $U(z)<\theta$ on $(-\infty,0)\cup (a,\infty)$ and $U(z)>\theta$ on $(0,a)$. Plugging in this ansatz into \eqref{eq: intro_system_1}--\eqref{eq: intro_system_2}, we arrive at the system
\begin{align}
c\begin{pmatrix} U' \\ Q' \end{pmatrix}
+A
\begin{pmatrix} U \\ Q \end{pmatrix}
= \begin{pmatrix} \integ{x}{z-a}[z]<K(x)> \\ 0 \end{pmatrix}, \label{eq: ansatz}
\end{align}
where $A:=\begin{pmatrix}
1 & 1\\
-\epsilon & \epsilon\gamma
\end{pmatrix}$. The matrix $A$ has eigenvalues and eigenvectors given by \begin{align}
\omega_1(\epsilon)&=\frac{1+\gamma\epsilon+\sqrt{(1-\gamma\epsilon)^2-4\epsilon}}{2},\qquad v_1=\begin{pmatrix}1\\\omega_1-1\end{pmatrix}, \\
\omega_2(\epsilon)&=\frac{1+\gamma\epsilon-\sqrt{(1-\gamma\epsilon)^2-4\epsilon}}{2},\qquad v_2=\begin{pmatrix}1\\\omega_2-1\end{pmatrix}.
\end{align}
Under the assumption of fixed $\gamma>0$ and $\epsilon \ll 1$, both $\omega_1$ and $\omega_2$ are positive with
\begin{equation}\label{eq: limits_eig}
\omega_1(0)=1,\qquad \omega_2(0)=0,\qquad \omega_1'(0)=-1, \qquad \omega_2'(0)=1+\gamma.
\end{equation}
Note that 
\begin{equation}\label{eq: omega_calcs}
\omega_1 \omega_2 =\det(A)=\epsilon(1 + \gamma), \qquad \omega_1 + \omega_2 = tr(A) = 1+\epsilon \gamma,
\end{equation}
and from \eqref{eq: omega_calcs}, we see that 
\begin{equation}
\f{1-\omega_1}{\omega_2}=\f{\omega_2-\epsilon\gamma}{\omega_2}=1-\f{\epsilon\gamma}{\omega_2}=1-\f{\omega_1\gamma}{1+\gamma}. \label{eq: OneMinusOmega1_Omega2}
\end{equation}
Furthermore, by differentiating the equations in \eqref{eq: omega_calcs} twice, we find $\omega_1''\omega_2 +2\omega_1'\omega_2' +\omega_1\omega_2''=0$ and $\omega_1''+\omega_2''=0$ so
\begin{equation}
\omega_1''(0)=-2(1+\gamma),\qquad \omega_2''(0)=2(1+\gamma). \label{eq: omega_secder}
\end{equation}
The formal solution to \eqref{eq: ansatz} is given by \cite{PintoandErmentrout-SpatiallyStructuredActivityinSynapticallyCoupledI.TravelingWaves,Pinto2005}
\begin{align}
U(z)&=\integ{x}{-\infty}[z]<C_x(x-z,c,\epsilon)\left(\integ{y}{x-a}[x]<K(y)>\right)>,\label{eq: U_formal} \\
Q(z)&=\integ{x}{-\infty}[z]<D_x(x-z,c,\epsilon)\left(\integ{y}{x-a}[x]<K(y)>\right)>, \label{eq: Q_formal}\\
U'(z)&=\integ{x}{-\infty}[z]<C_x(x-z,c,\epsilon)\left(K(x)-K(x-a)\right)>, \label{eq: Up_formal}\\
Q'(z)&=\integ{x}{-\infty}[z]<D_x(x-z,c,\epsilon)\left(K(x)-K(x-a)\right)>,\label{eq: Qp_formal}
\end{align}
where 
\begin{align}
C(x,c,\epsilon)&=\frac{1}{\omega_1-\omega_2}\left[\f{1-\omega_2}{\omega_1}e^{\frac{\omega_1 x}{c}}-\f{1-\omega_1}{\omega_2}e^{\frac{\omega_2 x}{c}}\right], \label{eq: C}\\
C_x(x,c,\epsilon)&=\frac{1}{c(\omega_1-\omega_2)}\left[(1-\omega_2)e^{\frac{\omega_1 x}{c}}-(1-\omega_1)e^{\frac{\omega_2 x}{c}}\right], \label{eq: Cx}\\
D(x,c,\epsilon)&=\frac{\epsilon}{\omega_1-\omega_2}\left[-\f{\omega_1}e^{\frac{\omega_1 x}{c}}+\f{\omega_2}e^{\frac{\omega_2 x}{c}}\right], \label{eq: D} \\
D_x(x,c,\epsilon)&=\frac{\epsilon}{c(\omega_1-\omega_2)}\left[-e^{\frac{\omega_1 x}{c}}+e^{\frac{\omega_2 x}{c}}\right]. \label{eq: Dx}
\end{align}
Using the properties of $\omega_1$ and $\omega_2$ from \eqref{eq: limits_eig}--\eqref{eq: omega_calcs}, we see that
\begin{align*}
C(0,c,\epsilon)&=\f{\omega_2(1-\omega_2)-\omega_1(1-\omega_1)}{\omega_1\omega_2(\omega_1-\omega_2)}=\f{-1+\omega_1+\omega_2}{\omega_1\omega_2}=\f{\gamma}{1+\gamma}, \numberthis \label{eq: C0}\\
D(0,c,\epsilon)&=\f{\epsilon}{\omega_1\omega_2}=\f{1+\gamma}. \numberthis \label{eq: D0}
\end{align*}

Clearly, $(U(-\infty),Q(-\infty))=(0,0)$, and by splitting the integrals by $\int_{-\infty}^z=\int_{-\infty}^a+\int_a^z$, an easy application of the dominated convergence theorem shows $(U(+\infty),Q(+\infty))=(0,0)$ as well. Hence, even for arbitrary $a,c>0$, the formal solution can be understood as a homoclinic orbit connecting the point $(0,0)$ to itself.

This brings us to the main difficulty and purpose of this paper. When is a solution to \eqref{eq: ansatz} really a solution to \eqref{eq: intro_system_1}--\eqref{eq: intro_system_2}? Assuming only the existence of a front (that we show is always unique), how do we prove the existence of unique parameters $(a(\epsilon),c(\epsilon))$ that solve the compatibility equations $U(0)=U(a)=\theta$? How do we demonstrate that the sub and super threshold regions truly correspond and that the pulse is close to the singular homoclinic orbit? Can we compare the difference between the pulse and front speed up to first order in $\epsilon$? We answer all of these questions.

\subsection{Main Results}\label{subsec: main_goal}
Our main goal is to rigorously prove the existence and uniqueness of fast traveling pulses to system \eqref{eq: intro_system_1}-\eqref{eq: intro_system_2} under minimal assumptions. In particular, we prove the following theorems, assuming hypotheses (H1)--(H3) discussed in \cref{subsec: hyp}.
\begin{theorem}[Existence and Uniqueness of Fast Pulses]\label{thm: E}
When $0<\epsilon \ll 1,$ there exists a unique $C^2$ (modulo translation) fast traveling pulse solution $(U_\epsilon,Q_\epsilon)$ to system \eqref{eq: intro_system_1}-\eqref{eq: intro_system_2}. In particular, with $z=x+ct$, there exists unique $C^1$ curves $c_\epsilon=c(\epsilon)$ and $a_\epsilon=a(\epsilon)$ such that $U_\epsilon(z)<\theta$ on $(-\infty,0)\cup(a_\epsilon,\infty)$ and $U_\epsilon(z)>\theta$ on $(0,a_\epsilon)$. The parameters satisfy $c_\epsilon\to c_f$ and $a_\epsilon\to +\infty$ as $\epsilon \to 0$, where $c_f>0$ is the unique speed of the front solution to \eqref{eq: intro_scalar}. As a homoclinic orbit, $\mathcal{S}_\epsilon=\Bigl\{(U_\epsilon(z),Q_\epsilon(z)): z\in \R \Bigr\}$ satisfies $d_H(\mathcal{S}_\epsilon,\mathcal{S}_0)\to 0$ as $\epsilon\to 0$. Here, 
$$
d_H(\mathcal{S}_\epsilon,\mathcal{S}_0)=\max\left\lbrace \sup_{(U,Q)\in \mathcal{S}_0}d\Bigl(\mathcal{S}_\epsilon,(U,Q)\Bigr),\sup_{(U_\epsilon,Q_\epsilon)\in \mathcal{S}_\epsilon}d\Bigl((U_\epsilon,Q_\epsilon),\mathcal{S}_0\Bigr) \right\rbrace
$$ 
is the Hausdorff distance between sets in $(U,Q)$ space with respect to the standard Euclidean metric, and $\mathcal{S}_0$ is the singular homoclinic orbit at $\epsilon=0$.
\end{theorem}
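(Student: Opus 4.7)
The plan is to reduce existence of the pulse to a two-variable root-finding problem for the width $a$ and speed $c$, apply the implicit function theorem at $\epsilon=0$ after a suitable rescaling of the width, and then upgrade the root to a genuine homoclinic orbit by a four-regime asymptotic matching argument.

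\textbf{Step 1 (reformulation).} Fix the translation so that the front threshold crossing lies at $z=0$ and set
\[ F_1(a,c,\epsilon) := U(0;a,c,\epsilon)-\theta, \qquad F_2(a,c,\epsilon) := U(a;a,c,\epsilon)-\theta, \]
with $U$ given by \eqref{eq: U_formal}. Because a fast pulse requires $a=O(1/\epsilon)$, the naive IFT in $(a,c)$ is singular at $\epsilon=0$. The right normal form, suggested by \eqref{eq: limits_eig} and the exponential $e^{\omega_2 x/c}$ in \eqref{eq: Cx}, is the rescaled width $A := \omega_2(\epsilon) a/c$, which is order one. After substituting \eqref{eq: Cx}, splitting $\int_{-\infty}^z=\int_{-\infty}^{z-a}+\int_{z-a}^z$, and changing variable $s=\omega_2 r/c$ on the slow piece, every singular factor is absorbed and $F_1,F_2$ extend as $C^1$ functions of $(c,A,\epsilon)$ on a neighborhood of $(c_f,A_0,0)$.

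\textbf{Step 2 (IFT at $\epsilon=0$).} At $\epsilon=0$ the slow exponential in $C_x$ contributes only where the inner kernel integral sweeps across the full support of $K$; a dominated convergence argument (with pointwise limit $\alpha\,\mathbf{1}_{(-A,0)}(s)$, where $\alpha=\int_\R K$) yields
\[ F_1(c,A,0) = \frac{1}{c}\int_{-\infty}^{0} e^{r/c}\int_{-\infty}^{r} K(y)\,\mathrm{d}y\,\mathrm{d}r - \theta, \]
\[ F_2(c,A,0) = \frac{1}{c}\int_{-\infty}^{0} e^{r/c}\int_{r}^{\infty} K(y)\,\mathrm{d}y\,\mathrm{d}r - \frac{\alpha}{1+\gamma}\bigl(1-e^{-A}\bigr) - \theta. \]
The first equation is exactly the scalar front compatibility equation, whose unique root is $c=c_f$. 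The second is then an explicit monotone equation in $A$ whose unique positive root $A_0$ exists under (H3) (which controls the ordering of kernel integrals); strict monotonicity follows from $-\frac{\alpha}{1+\gamma}e^{-A}<0$. The $2\times 2$ Jacobian $\partial(F_1,F_2)/\partial(c,A)$ at $(c_f,A_0,0)$ is lower triangular with diagonal entries $\partial_c F_1\ne 0$ (front transversality) and $\partial_A F_2=-\frac{\alpha}{1+\gamma}e^{-A_0}\ne 0$. The $C^1$ implicit function theorem then produces unique $C^1$ curves $c_\epsilon\to c_f$ and $a_\epsilon = cA_\epsilon/\omega_2(\epsilon)\to+\infty$.

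\textbf{Step 3 (orbit closure and Hausdorff distance).} Given $(a_\epsilon,c_\epsilon)$, I verify the threshold sign conditions by decomposing $\R$ into four regimes: a fast front window $[-L,L]$, a slow plateau $[L,a_\epsilon-L]$, a fast back window $[a_\epsilon-L,a_\epsilon+L]$, and the far tail $[a_\epsilon+L,\infty)$. On the fast windows, the kernels $C_x,D_x$ converge uniformly (after the translation $z\mapsto z-a_\epsilon$ on the back) to their $\epsilon=0$ limits, so $(U_\epsilon,Q_\epsilon)$ converges to the singular front (resp.\ translated singular back carrying the slow-evolved value of $Q$), and the strict sign of $U-\theta$ propagates by transversality. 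On the slow plateau, the rescaling $\tau=\epsilon z$ together with \eqref{eq: Q_formal} shows $(U_\epsilon,Q_\epsilon)$ tracks a compact segment of the reduced slow manifold on which $U>\theta$ uniformly. Combining the four regimes yields both the sign statement and $d_H(\mathcal{S}_\epsilon,\mathcal{S}_0)\to 0$: every point of $\mathcal{S}_0$ is $o(1)$-close to the appropriate regime piece of $\mathcal{S}_\epsilon$, and conversely.

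\textbf{Main obstacle.} The decisive step is the IFT in Step 2: identifying $A=\omega_2 a/c$ as the correct normal form, proving that the integrals defining $F_1,F_2$ admit $C^1$ extensions through $\epsilon=0$, and computing the limiting equations in a form where the Jacobian is manifestly non-degenerate. Once these algebraic-analytic facts are nailed down, Step 3 becomes a careful but classical multi-timescale matching, while uniqueness of the pulse is inherited from uniqueness of $(c_\epsilon,A_\epsilon)$ and of the scalar front speed $c_f$.
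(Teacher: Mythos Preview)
Your proposal is correct and follows essentially the same strategy as the paper: rescale the width (the paper uses $\tau=\epsilon a$, which is equivalent to your $A=\omega_2 a/c$ since $\omega_2/\epsilon\to 1+\gamma$), apply the implicit function theorem at $\epsilon=0$ to a system whose Jacobian is triangular because $\partial F_1/\partial A=0$, and then verify the threshold inequalities and Hausdorff convergence via a four-region partition of $\R$. Two small slips to patch: your partition omits the left tail $(-\infty,-L)$ (the paper takes $R_1=(-\infty,z_0]$ so the front window absorbs it), and the existence of the positive root $A_0$ is guaranteed by hypothesis (H1) (namely $\tfrac{\gamma}{1+\gamma}<\theta<\tfrac12$), not (H3).
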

As a result of the original work of Pinto and Ermentrout \cite{PintoandErmentrout-SpatiallyStructuredActivityinSynapticallyCoupledI.TravelingWaves}, as well as follow-up mathematical analysis (such as  by Pinto et al. \cite{Pinto2005}), there is a heuristically and computationally justified acceptance of much of \cref{thm: E} (especially for nonnegative kernels), but a complete proof of existence and uniqueness is lacking prior to this paper. 

In particular, to the author's knowledge, there have been no other results that rigorously study the fast pulse solutions in the Heaviside model when they are broken down by fast and slow representations in inner and outer regions for $\epsilon \ll 1$. Unlike the singular homoclinic orbit, when $\epsilon>0$, the fast and slow dynamics need to be analyzed when they overlap. When $K$ is chosen as the exponential type. we have PDE reductions and applications of classic geometric singular perturbation theory can be applied. We avoid requiring this reduction.

In order to prove \cref{thm: E}, we derive a number of new estimates of the solutions and their wave speeds and widths. The following corollary summarizes first-order approximations of the parameters. Denote $\tau(\epsilon):=\epsilon a(\epsilon)$, a natural change of variable since $a(\epsilon)=O\left(\f{\epsilon}\right)$ is known based on the fast and slow time scales. The functions $f$ and $g$ are discussed at length in \cref{sec: calc}.
\begin{corollary}[First-Order Approximations of $\tau(\epsilon)$ and $c(\epsilon)$] \label{thm: first_order}Consider the unique pair of $C^1$ curves $\tau_\epsilon=\tau(\epsilon)$ and $c_\epsilon=c(\epsilon)$ from \cref{thm: E}. The limits $\tau(\epsilon)\to \tau_0$ and $c(\epsilon)\to c_f$ hold, where
$$
\tau_0=-\f{c_f}{1+\gamma}\ln(2\theta(1+\gamma)-\gamma).
$$
The derivative of the vector $(\tau(\epsilon),c(\epsilon))^T$ satisfies
\begin{equation}
 \begin{pmatrix}
\tau'(0) \\ c'(0)
\end{pmatrix}
=-J^{-1}(\tau_0,c_f,0)
\begin{pmatrix}
f_\epsilon(\tau_0,c_f,0) \\ g_\epsilon(\tau_0,c_f,0)
\end{pmatrix},
\end{equation}
where $f(\tau(\epsilon),c(\epsilon),\epsilon)=U(0)=\theta$ and $g(\tau(\epsilon),c(\epsilon),\epsilon)=U(a(\epsilon))=\theta$ are the speed index equations for the pulse. Hence, $\tau(\epsilon)\approx \tau_0 +\tau'(0)\epsilon$ and $c(\epsilon)\approx c_f +c'(0)\epsilon$ are first-order approximations of $\tau(\epsilon)$ and $c(\epsilon)$, respectively. In particular,
\begin{equation}\label{eq: cp_zero}
c'(0)=-\f{\phi_f'(c_f)}\left\lbrace 2\theta-\f{c_f}\integ{x}{-\infty}[0]<|x|\left(e^{\f{x}{c_f}}+1\right)K(x)>\right\rbrace,
\end{equation}
where $\phi_f$ is the speed index function for the front that satisfies $\phi_f'(c_f)<0$.
\end{corollary}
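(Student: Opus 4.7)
The plan is to treat the corollary as a direct consequence of the implicit function theorem applied to the speed index system $f(\tau,c,\epsilon) = g(\tau,c,\epsilon) = \theta$, leveraging the existence and uniqueness of the $C^1$ curves $\tau(\epsilon)$ and $c(\epsilon)$ and their limits already supplied by \cref{thm: E}. For the explicit value of $\tau_0$, I would pass to the limit in $g(\tau_0,c_f,0) = \theta$. Integrating \eqref{eq: U_formal} at $z=a$ by parts converts $C_x$ into $C$, producing a boundary term $C(0)\int_0^\infty K = \tfrac{\gamma}{1+\gamma}\int_0^\infty K$ via \eqref{eq: C0} and an interior contribution from $K(x)-K(x-a)$. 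In the limit $\epsilon \to 0$ with $\tau=\epsilon a$ fixed, the fast mode of $C$ collapses (using \eqref{eq: limits_eig}--\eqref{eq: OneMinusOmega1_Omega2}) to $e^{x/c_f} - \tfrac{1}{1+\gamma}$, while $\omega_2/\epsilon \to 1+\gamma$ makes the slow mode survive in the $K(x-a)$ piece as $-\tfrac{1}{1+\gamma}e^{-(1+\gamma)\tau_0/c_f}\int K$. Substituting the front threshold identity $\int_{-\infty}^0 K - \int_{-\infty}^0 e^{x/c_f}K\,dx = \theta$ reduces everything to $2\theta(1+\gamma) - \gamma = e^{-(1+\gamma)\tau_0/c_f}$, whence the stated formula.

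Differentiating both $f(\tau(\epsilon),c(\epsilon),\epsilon) = \theta$ and $g(\tau(\epsilon),c(\epsilon),\epsilon) = \theta$ in $\epsilon$ yields the linear system displayed in the statement; nondegeneracy of $J(\tau_0,c_f,0)$ is exactly what drives the implicit function theorem argument inside \cref{thm: E}, so it is available here. To reach the explicit formula \eqref{eq: cp_zero}, the key structural observation is that as $\epsilon \to 0$ with $a=\tau/\epsilon \to \infty$, the back of the pulse is too far from $z=0$ to influence $U(0)$ at leading order under the decay hypothesis on $K$, so $f(\tau,c,0) = \phi_f(c)$ is independent of $\tau$. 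Hence $f_\tau(\tau_0,c_f,0)=0$ and $f_c(\tau_0,c_f,0) = \phi_f'(c_f)$, collapsing the $2\times 2$ system to $c'(0) = -f_\epsilon(\tau_0,c_f,0)/\phi_f'(c_f)$.

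What remains is to compute $f_\epsilon(\tau_0,c_f,0)$. Differentiating $U(0)$ from \eqref{eq: U_formal} in $\epsilon$ produces a boundary piece proportional to $K(x-\tau/\epsilon)\,\tau/\epsilon^2$, which vanishes in the limit under the decay of $K$, and a smooth piece coming from $\partial_\epsilon C_x$. Using \eqref{eq: limits_eig}--\eqref{eq: omega_secder}, a direct computation yields
\begin{equation*}
\partial_\epsilon C_x(x,c_f,0) = \frac{1}{c_f}\left[\left(1 + \frac{|x|}{c_f}\right)e^{x/c_f} - 1\right], \qquad x<0.
\end{equation*}
Pairing this against $\int_{-\infty}^x K(y)\,dy$ and recognizing the front threshold identity contributes one copy of $\theta$; integrating the two remaining pieces by parts (using $K$-decay and the identity $\int_{-\infty}^0 \int_{-\infty}^x K\,dy\,dx = \int_{-\infty}^0 |x|K(x)\,dx$) extracts a second copy of $\theta$ via the same front identity and produces the integral in \eqref{eq: cp_zero}. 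Assembling gives $f_\epsilon(\tau_0,c_f,0) = 2\theta - \tfrac{1}{c_f}\int_{-\infty}^0 |x|(e^{x/c_f}+1)K(x)\,dx$, whence \eqref{eq: cp_zero} after dividing by $-\phi_f'(c_f)$. The main technical obstacle is precisely this last step: justifying via dominated convergence that the singular-looking boundary term truly vanishes under the minimal hypotheses (H1)--(H3), and correctly tracking the two separate appearances of the front threshold identity that combine into the coefficient $2\theta$.
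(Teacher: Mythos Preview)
Your proposal is correct and follows essentially the same route as the paper: the implicit function theorem applied to $(f,g)$ at $(\tau_0,c_f,0)$, the key structural fact $f(\tau,c,0)=\phi_f(c)$ forcing $f_\tau=0$ and $f_c=\phi_f'$, and the computation of $f_\epsilon(\tau_0,c_f,0)$ via $\partial_\epsilon C_x$ producing the $2\theta$ coefficient through two appearances of the front identity. The only cosmetic difference is that the paper obtains $g(\tau,c,0)$ by splitting the $x$-integral at $0$ and isolating vanishing error terms $E_1,E_2,E_3$ rather than your integration-by-parts route, but the mechanism and the resulting formulas are identical.
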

\subsection{Hypotheses} \label{subsec: hyp}
We state our main hypotheses and then offer some remarks as to why they are necessary.
\begin{itemize}
\item[(H1)] The parameters satisfy $0<\theta<\min\{\f{2},\int_{-\infty}^0 K\}$, \, $\f{\gamma}{1+\gamma}<\theta,\,$ and $\epsilon \ll 1$.
\item[(H2)] When $\epsilon=0$, there exists a traveling front solution $u(x,t)=U_f(x+c_ft)$ to \eqref{eq: intro_scalar} with $U_f(\cdot)<\theta$ on $(-\infty,0)$, $U_f(\cdot)>\theta$ on $(0,\infty)$, $U_f(-\infty)=0$, and $U_f(\infty)=1$. The solution is translation invariant, but we will assume $U_f(0)=\theta$ unless stated otherwise.
\item[(H3)] There exists $\alpha>0$ and $\rho>0$ such that $|K(x)|\leq \alpha e^{-\rho|x|}$. Moreover, assume $K$ is continuous and $\int_\R K = 1$.
\end{itemize}
We require $0<\theta<\int_{-\infty}^0 K$ because it is necessary in order for the front to exist, while $\theta <\f{2}$ is required for the pulse. Often $K$ is symmetrical so these conditions are equivalent. To conclude remarks on (H1), we note that the assumption $\frac{\gamma}{1+\gamma}<\theta$ ensures that the line $Q=\frac{U}{\gamma}$ does not intersect $Q=-U+H(U-\theta)$ when $U>\theta$, which would lead to a second fixed point, and the homoclinic orbit would be replaced by a heteroclinic orbit. Hypothesis (H2) ensures we have a leading order inner solution. Finally, hypothesis (H3) is a common assumption in nonlocal problems. Biologically, long-range synaptic interaction strengths decay sharply, while mathematically, exponential decay ensures that terms of order $O\left(\epsilon^{-k}e^{-\f{|C|}{\epsilon}}\right)$ vanish. 
\section{Singular Solution,  Previous Results, and Outline}
We review the Heaviside version of the $\epsilon=0$ singular homoclinic orbit construction from \cite{PintoandErmentrout-SpatiallyStructuredActivityinSynapticallyCoupledI.TravelingWaves}, which proves the leading orders of inner and outer solutions can be matched at boundary points. We then highlight how our result advances nonlocal geometric singular perturbation theory approaches.
\subsection{Construction of Singular Homoclinic Orbit}
\subsubsection*{Fast Inner Solutions at $\epsilon=0$}
Writing \eqref{eq: intro_system_1}--\eqref{eq: intro_system_2}, in terms of the traveling coordinate, we obtain the fast inner system
\begin{align}
c\begin{pmatrix} U' \\ Q' \end{pmatrix}
+\begin{pmatrix}
1 & 1\\
-\epsilon & \epsilon\gamma
\end{pmatrix}
\begin{pmatrix} U \\ Q \end{pmatrix}
= \begin{pmatrix} \integ{y}{\R}[]<K(z-y)H(U(y)-\theta)> \\ 0 \end{pmatrix}. \label{eq: fast_null}
\end{align}
When $\epsilon=0$, from the second equation, we have $Q'=0$ so $Q\equiv Q_0$ for some constant $Q_0$. Plugging into the first equation, we recall from \cite{ExistenceandUniqueness-ErmMcLeod} that there may (guaranteed when $K\geq 0$) exist a front with wave speed $c_f>0$ that solves
\begin{equation}
c_f U_f' + U_f=\integ{y}{\R}[]<K(z-y)H(U_f(y)-\theta)>.
\label{eq: front_ODE}
\end{equation}
Here, $Q_0=0$ and the solution is given by
\begin{align}
U_f(z)&=\frac{1}{c_f}\integ{x}{-\infty}[z]<e^{\frac{x-z}{c_f}}\left(\integ{y}{-\infty}[x]<K(y)>\right)>, \label{eq: front}\\
U_f'(z)&=\frac{1}{c_f}\integ{x}{-\infty}[z]<e^{\frac{x-z}{c_f}}K(x)>. \label{eq: frontp}
\end{align}
The limits $U_f(-\infty)=0,\, U_f(\infty)=1,\, U_f'(\pm \infty)=0$ hold, and the solution satisfies $U_f(z)<\theta$ on $(-\infty,0)$ and $U_f(z)>\theta$ on $(0,\infty)$. The front speed $c_f$ is calculated as a root of of the so--called speed index function $\phi_f(c)=U_f(0)=\theta$. Explicitly,
\begin{align*}
\phi_f(c)&:=\frac{1}{c}\integ{x}{-\infty}[0]<e^{\frac{x}{c}}\left(\integ{y}{-\infty}[x]<K(y)>\right)>=\integ{x}{-\infty}[0]<K(x)>-\integ{x}{-\infty}[0]<e^{\f{x}{c}}K(x)>,\numberthis \label{eq: phi_def}\\
\phi_f'(c)&=-\f{c}\phi_f(c)-\f{c^3}\integ{x}{-\infty}[0]<xe^{\frac{x}{c}}\left(\integ{y}{-\infty}[x]<K(y)>\right)>=\f{c^2}\integ{x}{-\infty}[0]<xe^{\f{x}{c}}K(x)>. \numberthis \label{eq: phip_def}
\end{align*}
In the fast system, the inner solution approximates the front as it completes most of its trajectory from $(U_\epsilon(-\infty),Q_\epsilon(-\infty))=(0,0)$ to $(U_\epsilon,Q_\epsilon)\approx (1,0)$ in $O(1)$ time. Then the slow outer solution takes over during upward travel near the right critical manifold (discussed below), and it takes $O\left(\f{\epsilon}\right)$ time until $(U_\epsilon(z),Q_\epsilon(z))\approx (2\theta,1-2\theta)$.

It can then be shown that when $Q_0=1-2\theta,$ the leading order of the fast inner solution generates a traveling back $U_b:=2\theta-U_f$ with the same wave speed (i.e. $c_b=c_f$). The sub and super threshold regions are then opposite of those for the front. The solution approximates the back in $O(1)$ time again, preparing the slow left critical manifold (discussed below) to carry the solution from $(U_\epsilon,Q_\epsilon)\approx (2\theta-1,1-2\theta)$ to $(0,0)$ in $O\left(\f{\epsilon}\right)$ time.

Combined, the fast equations provide two trajectories in $(U,Q)$ space: the front and the back, which we write as
\begin{align}
\mathcal{F}_0:=\{(U_f(z),0): z\in \R\}, \qquad \mathcal{B}_0:=\{(U_b(z),1-2\theta): z\in \R\}.\label{eq Front_Back_sing}
\end{align}
\subsubsection*{Slow Outer Solutions at $\epsilon=0$}
Now consider slow time with the coordinate $\tau=\epsilon z$. Then we obtain the system
\begin{align}
c\begin{pmatrix} \epsilon \dot{U} \\ \dot{Q} \end{pmatrix}
+\begin{pmatrix}
1 & 1\\
-1 & \gamma
\end{pmatrix}
\begin{pmatrix} U \\ Q \end{pmatrix}
= \begin{pmatrix} \integ{y'}{\R}[]<\f{\epsilon}K\left(\frac{\tau-y'}{\epsilon}\right)H(U(y')-\theta)> \\ 0 \end{pmatrix}. \label{eq: slow_null}
\end{align}
Originally from \cite{PintoandErmentrout-SpatiallyStructuredActivityinSynapticallyCoupledI.TravelingWaves},  the firing rate was smooth so  $\{K_\epsilon\}_{\epsilon>0}$ was treated as a family of functions that converge to the delta distribution. In our case, the Heaviside function has a discontinuity, but causes no issues since our left (resp. right) slow critical manifolds are only of interest when $U(\tau)<\theta$ (resp. $U(\tau)>\theta$). Therefore, as $\epsilon\to 0$, we obtain the leading order of the slow system
\begin{align}
Q&=-U+H(U(\tau)-\theta), \\
c\dot{Q}&=U-\gamma Q.
\end{align}
Hence, we may define the left and right critical manifolds in neighborhoods of interest by
\begin{align}
\widehat{\mathcal{M}}_0^L:=\{(-Q,Q): Q\in[-\delta_0,(1-2\theta)+\delta_0]\}, \quad \widehat{\mathcal{M}}_0^R:=\{(1-Q,Q): Q\in [-\delta_0,(1-2\theta)+\delta_0]\}. \label{eq: slow_crit_manifolds}
\end{align}
By matching boundary conditions at takeoff and landing points of the front and back, we obtain the singular homoclinic orbit, defined by
\begin{equation} \label{eq: S_0_def}
\mathcal{S}_0:=\mathcal{F}_0\cup \mathcal{M}_0^R\cup \mathcal{B}_0\cup \mathcal{M}_0^L.
\end{equation}
Here, $\mathcal{M}_0^L$ and $\mathcal{M}_0^R$ restrict to $Q\in [0,1-2\theta]$. 

By the assumptions on the parameters, we can readily see that $\dot{Q}<0$ (resp. $\dot{Q}>0$) on $\mathcal{M}_0^L$ (resp. $\mathcal{M}_0^R$). The true pulse is then a homoclinic orbit that is close to the singular homoclinic orbit. We use the Hausdorff distance metric between sets to show we can control the error completely as $\epsilon \to 0$. See Figure \ref{fig: sing_kt}.
\begin{figure}[H]
\centering
\includegraphics[width=100mm]{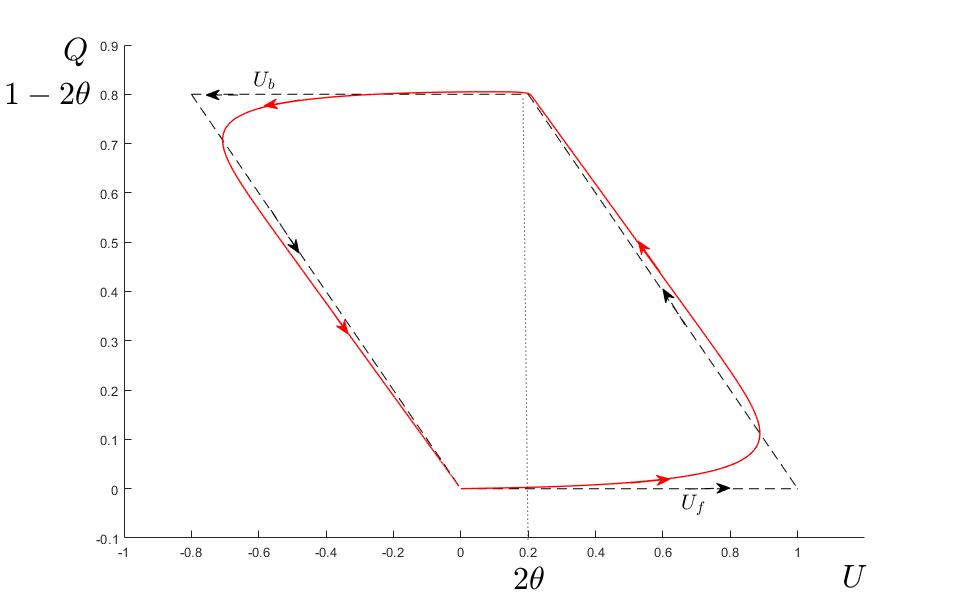}
\caption{Generalized phase space of $\mathcal{S}_0$ (black) along with $\mathcal{S}_\epsilon$ (red) for $\epsilon\ll 1$}
\label{fig: sing_kt}
\end{figure}

Note that if $U_f$ is non-monotone, the front and back repeat points in phase space when $Q=Q_0$. Moreover, it is possible for $U_f<0$ (resp. $U_b>2\theta$) or $U_f>1$ (resp. $U_b<2\theta -1$). For illustrative purposes only, we show the monotone case.
\subsection{Comparison to Previous Results}
Over the course of decades, there have been many results related to the Heaviside model, let alone variations and other nonlocal problems. In this subsection, we highlight some foundational studies that are directly relevant to our main results.
\subsubsection*{Foundational Work}
In the scalar equation, the front is a necessary area of study by itself. For sigmoidal firing rates, the existence (and local uniqueness) of fronts was first proven in the setting of nonnegative kernels by Ermentrout and McLeod in their pioneering work \cite{ExistenceandUniqueness-ErmMcLeod}. In nonlocal equations covering neural fields with nonnegative kernels, Chen \cite{Chen1997} provided strong proofs of uniqueness and asymptotic stability.

With fronts established in the Heaviside and sigmoidal cases, as discussed above, Pinto and Ermentrout \cite{PintoandErmentrout-SpatiallyStructuredActivityinSynapticallyCoupledI.TravelingWaves}
established the construction of two traveling pulses in the singularly perturbed system, one fast and one slow. The fast pulse is close to the singular homoclinic orbit. The slow pulse will not be focused on here since it is unstable. By highlighting the different time scales, they established the singular solution and computed the fast pulse without rigorous proof of inner and outer matching when $\epsilon>0$. Their use of geometric singular perturbation theory was only applied in the sigmoidal case. For the Heaviside case, they suggested methods for solving for $(a,c)$ with fixed $\epsilon$, but did not study sub and super threshold regions.

Building off of \cite{PintoandErmentrout-SpatiallyStructuredActivityinSynapticallyCoupledI.TravelingWaves}, substantial progress towards a proof was given by Pinto, Jackson, and Wayne \cite{Pinto2005}. They showed that for positive ``bell-shaped" kernels, the fast (and slow) parameter pair $(a,c)$ can be solved for when $\theta$ is small. However, the calculations are directly based on that kernel shape. We still note that our intuition behind solving $U(0)=U(a)=\theta$ for $(a,c)$ in this paper is motivated by some of their findings.

Our results depart entirely from \cite{Pinto2005} in several key ways. Rather than fixing $\epsilon$, we use the time scale $\tau=\epsilon a$ in order to draw a comparison between the front and pulse speed index functions as $\epsilon\to 0$, with the kernel shape being arbitrary when we apply the implicit function theorem. Indeed, our proofs of \cref{thm: E,thm: first_order} cover pulses with highly non-monotone fronts and backs. The existence and uniqueness of such traveling and stationary wave solutions to the scalar model have been previously studied \cite{Zhang-HowDo,Dyson2019_MBE,Lvwang,magpantay2010wave,ExploitingtheHamiltonian}.
See Figure \ref{fig: pulse_osc}, for example.

After obtaining the $C^1$ curves $(a(\epsilon),c(\epsilon))$, another key difference is that we carefully study the time scales and show formal pulse solutions are actual solutions to \eqref{eq: intro_system_1}--\eqref{eq: intro_system_2} and are arbitrarily close to the singular homoclinic orbit. The authors in \cite{Pinto2005} did not track sub and super threshold regions beyond computationally checking, and remarked that for some kernel choices, formal solutions fail the threshold requirements. We prove that exponential decay of the kernel is a sufficient hypothesis to avoid this situation.
\begin{figure}[H]
\centering
\includegraphics[width=100mm]{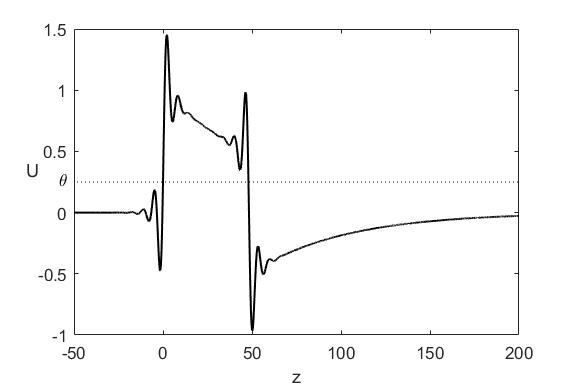}
\caption{A traveling pulse solution when $\theta=0.25$, $\gamma=0.2$, $\epsilon=0.005$, and $K(x)=\f{1+a^2}{4a}e^{-a|x|}(a\sin(|x|)+\cos(x))$ with $a=0.3$. The front and back are non-monotone, but cross the threshold only once}
\label{fig: pulse_osc}
\end{figure}
\subsubsection*{Geometric Singular Perturbation Theory and Other Methods}
In singularly perturbed systems, tracking the trajectory of the pulse can be exceptionally difficult. We provide a brief overview and comparison to common techniques in geometric singular perturbation theory.

Importantly, Fenichel  \cite{fenichel1979geometric} showed that in local and autonomous systems, the $\epsilon=0$ invariant critical manifolds (and their stable and unstable manifolds) can be perturbed. However, the pulse problem is still challenging because the shooting unstable manifold (the homoclinic orbit) may be of lower dimension than the stable and unstable manifolds of the critical manifolds. In the context of normally hyperbolic critical manifolds, Jones and Kopell \cite{JonesKopell} used differential forms to create the $k+1$ Exchange Lemma in order to compare manifolds of different dimensions, demonstrating that transverse intersections of various stable and unstable manifolds at $\epsilon=0$ is a sufficient condition in order to pick the wave speed. Jones et al. \cite{jones37construction} successfully constructed the Fitzhugh-Nagumo pulse with this method. The Exchange Lemma with exponentially small error (ELESE) was later developed to solve the multiple-pulse problem in more general systems like the Hamiltonian \cite{jones1996tracking}.

In neural fields with synaptic depression and sigmoidal firing rates, Faye \cite{Faye2013} also applied geometric singular perturbation theory to study existence and stability. However, the author required the kernel to be an exponential type, forcing the problem to ultimately reduce to a system of local, first-order ODEs. In this case, the variable $q$ is multiplied inside the convolution rather than being a linear term so the take-off point for the back occurs at a knee, leading to a center manifold; a ``blow-up" method was then required. In general, kernels with special Fourier transforms (such as differences of exponentials \cite{guo2016existence}) lead to local structures of varying complexity.

Using a true nonlocal method, a powerful existence result was obtained by Faye and Scheel \cite{Faye2015}. For the singularly perturbed nonlocal Fitzhugh--Nagumo equations, they proved the existence of fast pulses. They invoked partitions of unity, Fredholm operator theory, fixed point theory, and other methods to show that the real pulse is essentially the singular pulse with remainder terms that converge to zero as $\epsilon \to 0$. Due to nonlinearity structures, their methods are more advanced than those in this paper, but there are parallels in approach that we adopt such as exploiting exponential decay of the kernel.

In the present study, the Heaviside firing rate leads to closed form solutions, so while the analysis in our study is thorough, the mathematical tools are simpler. We are able to directly show $d_H(\mathcal{S}_\epsilon,\mathcal{S}_0) \to 0$ by sufficiently estimating solutions at regions in the domain where inner and outer regions overlap.

\subsubsection*{Smoothness and Other Considerations}
Mathematically, it is important to understand the Heaviside function as a limit of steep sigmiodals. Burlakov, Oleynik, and Ponosov \cite{Burlakov_etal2025} showed that if traveling pulses exist in neural fields systems with smoothed Heaviside firing rates and nonnegative kernels, then we may increase the steepness of the firing rates as they converge to the Heaviside case, and the pulse solutions converge to the Heaviside solution. However, they do not attempt to analyze the formal Heaviside solution as we do here. Dyson \cite{Dyson2020} explored a similar idea involving the front in the presence of lateral inhibition couplings, but showed existence persists as the firing rate morphs from the Heaviside function to smoothed Heaviside functions. Since smoothed Heaviside functions can take on sigmoidal shapes, they serve as a bridge between the traveling wave problem with and without closed form solutions. Hence, some studies examined iterative computational techniques for approximating solutions \cite{CoombesSchmidt,oleynik2015iterative}.  

Beyond those mentioned here, there have been many other works concerning the pulse in neural fields, exploring concepts like stability \cite{Zhang-OnStability,Sandstede-EvansFunctions,CoombesOwen_Evans}, heterogeneities \cite{kilpatrick2008traveling}, breathers \cite{folias2017}, stochasticity \cite{BressloffWebber,lang2016multiscale}, and stimulus-driven wave shifts \cite{shaw2024representing}, to name a few. 

\subsection{Outline of Technical Analysis}
\cref{thm: E,thm: first_order} are proven with two main technical steps, as follows:
\begin{enumerate}
\item In \cref{sec: calc}, we show that for $\epsilon \ll 1$, there exists unique solution curves $(\tau(\epsilon), c(\epsilon))$. We accomplish this using the implicit function theorem at $\epsilon=0$. We set up a mapping $F(\tau,c,\epsilon)$, show it is $C^1$, solve for the only eligible root, and prove $\det J \neq 0$. These calculations allow us to prove \cref{thm: first_order}.
\item In \cref{sec: thres}, we prove that $U_\epsilon(z)<\theta$ on $(-\infty,0)\cup (a(\epsilon),\infty)$ and $U_\epsilon(z)>\theta$ on $(0,a(\epsilon))$. We accomplish this by creating a sliding partition of $\R$. For all $\epsilon \ll 1$, the partition consists of four sets (for the front, back, left, and right). By rewriting the formal solution $(U_\epsilon(z),Q_\epsilon(z))$ in appropriate ways in each region, we show the solution is arbitrarily close to the singular homoclinic orbit.
\end{enumerate}
\section{Calculation of Unique $(a_\epsilon,c_\epsilon)$}\label{sec: calc}
In this section, we apply the implicit function theorem in order to prove the existence and uniqueness of $(a(\epsilon),c(\epsilon))$ for all $\epsilon<\epsilon_p$ for some $\epsilon_p>0$. We start with a proof that (if it exists) the front is always unique. We then study the pulse as a system.
\subsection{Uniqueness of the Front and Back Speed}\label{subsec: Unique_front}
In this subsection, we prove the following lemma.
\begin{lemma}\label{lemma: unique_front}
Suppose $K$ and $\theta$ are chosen so that a traveling front solution $(U_f,c_f)$ exists to \eqref{eq: front_ODE}. Then $c_f$ is the only positive root of the equation $\phi_f(c)=\theta$. Hence, the solution $(U_f,c_f)$ is unique (modulo translation). Moreover, $\phi_f'(c_f)<0$ holds.
\end{lemma}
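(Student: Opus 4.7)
The plan is to split the proof into two parts: derive an integral identity for $\phi_f'$ that, combined with the threshold hypothesis (H2), immediately gives $\phi_f'(c_f)<0$; then use this together with the endpoint behavior of $\phi_f$ to obtain uniqueness of the root.

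First, I would rewrite $\phi_f$ in the equivalent form
$$
\phi_f(c)=U_f(0;c)=\f{c}\integ{w}{-\infty}[0]<e^{w/c}J(w)>,\qquad J(w):=\integ{y}{-\infty}[w]<K(y)>.
$$
Differentiating in $c$ produces $\phi_f(c)+c\phi_f'(c)=-\f{c^2}\integ{w}{-\infty}[0]<we^{w/c}J(w)>$. An integration by parts using the antiderivative $\psi(w):=\integ{s}{-\infty}[w]<e^{s/c}J(s)>$, together with the relation $\psi(w)=ce^{w/c}U_f(w;c)$ that comes directly from the defining formula for $U_f$, yields the clean identity
$$
c^{2}\phi_f'(c)=\integ{w}{-\infty}[0]<e^{w/c}\bigl(U_f(w;c)-\phi_f(c)\bigr)>,
$$
valid for every $c>0$. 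At $c=c_f$ with $\phi_f(c_f)=\theta$, the right-hand side reduces to $\integ{w}{-\infty}[0]<e^{w/c_f}(U_f(w;c_f)-\theta)>$, and hypothesis (H2) — which gives $U_f(\cdot\,;c_f)<\theta$ strictly on $(-\infty,0)$ — forces the integrand to be strictly negative, so $\phi_f'(c_f)<0$ follows at once.

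For uniqueness of the root, the boundary values $\phi_f(0^{+})=J(0)>\theta$ (by (H1)) and $\phi_f(\infty)=0<\theta$, together with continuity, guarantee at least one positive root. When $K\ge 0$, the pointwise expression $\phi_f'(c)=\f{c^2}\integ{x}{-\infty}[0]<xe^{x/c}K(x)>$ is manifestly negative since $x\le 0$ and $K\ge 0$, so $\phi_f$ is globally strictly decreasing and the root is unique. For exponentially decaying but sign-changing $K$, $\phi_f$ need not be monotone, and a second root $c^{*}\neq c_f$ must be ruled out by applying the identity above at $c^{*}$ and piping the resulting sign of $\phi_f'(c^{*})$ through the threshold structure of the formal solution $U_f(\cdot\,;c^{*})$, comparing it to the true sub-threshold profile at $c_f$. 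The back-speed portion of the lemma then follows immediately by applying the same argument to the reflected profile $U_b=2\theta-U_f$, which solves the analogous equation with the threshold roles interchanged.

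The main obstacle is this uniqueness step in the sign-changing case. The assertion $\phi_f'(c_f)<0$ is essentially a one-line consequence of the identity plus (H2), and in the nonnegative-kernel setting strict monotonicity of $\phi_f$ is equally immediate. Without global monotonicity, however, ruling out a second root requires carefully exploiting how $U_f(\cdot\,;c)$ depends structurally on $c$ and showing that no alternative candidate root can simultaneously satisfy $\phi_f(c^{*})=\theta$ while maintaining a profile compatible with the formal-solution construction at $c_f$; this is the subtle point where the argument must work harder.
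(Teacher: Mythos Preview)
Your derivation of the identity
\[
c^{2}\phi_f'(c)=\int_{-\infty}^{0} e^{w/c}\bigl(U_f(w;c)-\phi_f(c)\bigr)\,\mathrm{d}w
\]
is correct, and evaluated at $c=c_f$ it gives $\phi_f'(c_f)<0$ exactly as the paper does; this part of your argument and the paper's are essentially the same computation written in slightly different order.

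The genuine gap is the uniqueness step for sign-changing $K$, which you yourself flag as unfinished. Your identity at a hypothetical second root $c^{*}$ reads
\[
c^{*2}\phi_f'(c^{*})=\int_{-\infty}^{0} e^{w/c^{*}}\bigl(U_f(w;c^{*})-\theta\bigr)\,\mathrm{d}w,
\]
but $U_f(\cdot\,;c^{*})$ is only the \emph{formal} profile at speed $c^{*}$, and hypothesis (H2) gives no threshold information about it; the sub-threshold property $U_f<\theta$ on $(-\infty,0)$ is only known for the true front at $c_f$. So you cannot read off a sign for $\phi_f'(c^{*})$, and even if you could, a sign on the derivative at $c^{*}$ does not by itself contradict $\phi_f(c^{*})=\theta$. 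The vague phrase ``piping the resulting sign \ldots\ comparing it to the true sub-threshold profile at $c_f$'' is not a workable plan as stated.

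The paper closes this gap with a different idea: instead of studying $\phi_f'$ at a candidate root, it expresses $\phi_f$ at an \emph{arbitrary} speed in terms of the true front at $c_f$. Writing $c=c_f/(\lambda+1)$ and integrating by parts against $e^{\lambda x/c_f}$ produces
\[
\phi_f\!\left(\frac{c_f}{\lambda+1}\right)=(\lambda+1)\left\{\theta-\frac{\lambda}{c_f}\int_{-\infty}^{0} e^{(\lambda+1)x/c_f}\,U_f(x)\,\mathrm{d}x\right\},
\]
where $U_f$ is the genuine front (at speed $c_f$). Now the single hypothesis $U_f(x)<\theta$ on $(-\infty,0)$ is enough: comparing the integral to $\int_{-\infty}^{0} e^{(\lambda+1)x/c_f}\theta\,\mathrm{d}x=c_f\theta/(\lambda+1)$ yields $\phi_f(c_f/(\lambda+1))>\theta$ for $\lambda>0$ and $<\theta$ for $-1<\lambda<0$, so $\lambda=0$ (i.e.\ $c=c_f$) is the unique root. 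The essential move you are missing is to anchor the integration-by-parts at $c_f$ so that the true front, not the formal profile at the unknown $c$, appears in the identity.
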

\begin{proof}
The proof is trivial when $K\geq 0$ since $\phi_f(0)=\int_{-\infty}^0 K>\theta$, $\phi_f(\infty)=0<\theta$, and $\phi_f'(c)<0$ for all $c>0$. But since our hypotheses leave open conditions on $K$, we prove a general claim (covering lateral inhibition, lateral excitation, oscillatory types).

For $\lambda >-1$, consider the function $\lambda \mapsto \phi_f\left(\f{c_f}{\lambda+1}\right)$. We show that $\lambda=0$ is the only solution to $\phi_f\left(\f{c_f}{\lambda+1}\right)=\theta$. From \eqref{eq: phi_def}, write
\begin{align*}
\phi_f\left(\f{c_f}{\lambda+1}\right)&=\frac{\lambda+1}{c_f}\integ{x}{-\infty}[0]<e^{\frac{(\lambda+1)x}{c_f}}\left(\integ{y}{-\infty}[x]<K(y)>\right)>\\
&=(\lambda+1)\int_{-\infty}^0 e^{\f{\lambda x}{c_f}}\left[\f{c_f} \int_{-\infty}^x e^{\f{y}{c_f}}\left(\integ{z}{-\infty}[y]<K(z)>\right)\,\mathrm{d}y \right]'\, \mathrm{d}x \\
&=(\lambda+1)\left\lbrace \phi_f(c_f)-\f{\lambda}{c_f}\int_{-\infty}^0 e^{\f{(\lambda+1) x}{c_f}}\left[\f{c_f} \int_{-\infty}^x e^{\f{y-x}{c_f}}\left(\integ{z}{-\infty}[y]<K(z)>\right)\,\mathrm{d}y \right]\, \mathrm{d}x\right\rbrace \\
&=(\lambda+1)\left\lbrace \theta-\f{\lambda}{c_f}\integ{x}{-\infty}[0]<e^{\f{(\lambda+1)x}{c_f}}U_f(x)> \right\rbrace.
\end{align*}
Since $U_f(x)<\theta$ for $x<0$, it follows that when $\lambda>0$ (resp. $\lambda<0$), we have $\phi_f\left(\f{c_f}{\lambda+1}\right)>\theta$ (resp. $\phi_f\left(\f{c_f}{\lambda+1}\right)<\theta$).
Hence, the only solution occurs at $\lambda=0$. As a result, the front has unique wave speed so is unique (modulo translation).

For the second claim, from \eqref{eq: phip_def}, write
\begin{align*}
\phi_f'(c_f)&=\f{c_f}\integ{x}{-\infty}[0]<x\left[\f{c_f}\integ{y}{-\infty}[x]<e^{\f{y}{c_f}}K(y)>\right]'>=-\f{c_f}\integ{x}{-\infty}[0]<e^{\f{x}{c_f}}U_f'(x)> \\
&=-\f{c_f}\left[\theta-\f{c_f}\integ{x}{-\infty}[0]<e^{\f{x}{c_f}}U_f(x)>\right].
\end{align*}
Again, since $U_f(x)<\theta$, it follows that $\f{c_f^2}e^{\f{x}{c_f}}U_f(x)<\f{\theta}{c_f^2}e^{\f{x}{c_f}}$ so $\phi_f'(c_f)<0$. This completes the proof.
\end{proof}
\begin{remark}
Concerning spectral stability, it can be shown for the front that roots of the Evans function \cite{Zhang-OnStability} satisfy $\mathcal{E}_f(\lambda)=0$, where 
$\mathcal{E}_f(\lambda):=\phi_f\left(\f{c_f}{\lambda+1}\right)-\theta$. These roots classify the point spectrum. By \cref{lemma: unique_front}, it follows that $\mathcal{E}_f(\lambda) > 0$ for real $\lambda>0$ and $\mathcal{E}_f'(0)=-c_f\phi_f'(c_f)>0$, showing that $\lambda=0$ is a simple eigenvalue. Thus, the only threats to stability occur when $\mathrm{Re}(\lambda)\geq 0$ and $\mathrm{Im}(\lambda)\neq 0$. Usually, the front and pulse are stable.
\end{remark}
\subsection{Preparation for the Implicit Function Theorem}\label{subsec: par}
We prepare to apply the implicit function theorem.
Intuitively, $U(a)$ is written in terms of the fast coordinate, but $U(a)=\theta$ occurs after an $O\left(\f{\epsilon}\right)$ outer region. Hence, as a form of time-scale matching, we set $\tau=\epsilon a$. Then from the two equations $U(0)=U\left(\f{\tau}{\epsilon}\right)=\theta$ in \eqref{eq: U_formal}, we arrive at the equations $f(\tau,c,\epsilon)=g(\tau,c,\epsilon)=\theta$, where
\begin{align}
f(\tau,c,\epsilon)&:=\integ{x}{-\infty}[0]<C_x(x,c,\epsilon)\left(\integ{y}{x-\f{\tau}{\epsilon}}[x]<K(y)>\right)>, \label{eq: f}\\
g(\tau,c,\epsilon)&:=\integ{x}{-\infty}[\f{\tau}{\epsilon}]<C_x\left(x-\f{\tau}{\epsilon},c,\epsilon\right)\left(\integ{y}{x-\f{\tau}{\epsilon}}[x]<K(y)>\right)>.\label{eq: g}
\end{align}
Define the vector--valued function $F:[\tau_0-\delta_\tau,\tau_0+\delta_\tau]\times [c_f-\delta_c,c_f+\delta_c]\times [0,\delta_\epsilon] \to \R^2$ by
\begin{equation}\label{eq: Fvec}
F(\tau,c,\epsilon):=
\begin{pmatrix}
f(\tau,c,\epsilon) \\
g(\tau,c,\epsilon)
\end{pmatrix}.
\end{equation}
Here, $\tau_0>0$ is derived below in \cref{subsec: IFT}. The positive numbers $\delta_\tau$, $\delta_c$, and $\delta_\epsilon$ are arbitrary and chosen so that $F$ is defined in a (positive) neighborhood of $(\tau_0,c_0,\epsilon_0)=(\tau_0,c_f,0)$, the base point where we apply the implicit function theorem. The strategy is to show that $F$ is $C^1$ and that $\det J \neq 0$, where
\begin{equation}\label{eq: Jacobian}
J=\begin{pmatrix}
f_\tau(\tau_0,c_0,\epsilon_0) & f_c(\tau_0,c_0,\epsilon_0)\\
g_\tau(\tau_0,c_0,\epsilon_0) & g_c(\tau_0,c_0,\epsilon_0)
\end{pmatrix}.
\end{equation}
\subsubsection{The Continuous Extension of $F$}\label{subsubsec: F_C1}
In this subsection, we show that $F$ can be continuously extended when $\epsilon = 0$. 
\begin{lemma}\label{lemma: F_extend}
\begin{itemize}
\item[(i)] Define 
\begin{equation}\label{eq: f0}
f(\tau,c,0):=\phi_f(c),
\end{equation}
the speed index function for the front.
\item[(ii)] Define
\begin{equation}\label{eq: g0}
g(\tau,c,0):= \f{1+\gamma}\left(\gamma + e^{-\f{(1+\gamma)\tau}{c}}\right)-\phi_f(c).
\end{equation}
Then $f$ and $g$ are continuous at $(\tau,c,0)$.
\end{itemize}
\end{lemma}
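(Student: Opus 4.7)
The plan is to verify the stated limits $f(\tau,c,\epsilon)\to\phi_f(c)$ and $g(\tau,c,\epsilon)\to\tfrac{1}{1+\gamma}(\gamma+e^{-(1+\gamma)\tau/c})-\phi_f(c)$ as $(\tau',c',\epsilon)\to(\tau,c,0^+)$, via dominated convergence on each of the two exponential modes of $C_x$. From \eqref{eq: limits_eig}--\eqref{eq: omega_calcs}, I would first record the leading-order behavior $\omega_1=1-\epsilon+O(\epsilon^2)$, $\omega_2=(1+\gamma)\epsilon+O(\epsilon^2)$, so that $(1-\omega_2)/[c(\omega_1-\omega_2)]\to 1/c$, $(1-\omega_1)/\epsilon\to 1$, $\omega_2/\epsilon\to 1+\gamma$, and $\omega_1-\omega_2\to 1$. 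For $\epsilon$ small one can fix $\omega_1\geq\tfrac12$ and $\omega_2\in[0,1/4]$, which yields the clean uniform dominator $|C_x(x,c,\epsilon)|\leq (C/c)e^{x/(2c)}$ on $(-\infty,0]$ that will be reused below.

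For part (i), since $\tau/\epsilon\to+\infty$, the inner integral $\int_{x-\tau/\epsilon}^{x}K(y)\,dy$ converges pointwise to $\int_{-\infty}^{x}K(y)\,dy$; combined with the $\omega_2$-mode contribution being $O(\epsilon)$ and the dominator $(C/c)e^{x/(2c)}\|K\|_{L^1}$, dominated convergence gives $f(\tau,c,\epsilon)\to\int_{-\infty}^{0}\tfrac{1}{c}e^{x/c}\int_{-\infty}^{x}K(y)\,dy\,dx=\phi_f(c)$. Joint continuity in $(\tau,c)$ at $\epsilon=0$ is immediate because the limiting integrand depends continuously on $c$ and the dominator can be made uniform on any compact $c$-neighborhood, while the $\tau$-dependence vanishes in the limit.

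For part (ii), after substituting $s=x-\tau/\epsilon$, write $g=g_A-g_B$ with
\begin{align*}
g_A(\tau,c,\epsilon)&=\integ{s}{-\infty}[0]<\tfrac{1-\omega_2}{c(\omega_1-\omega_2)}e^{\omega_1 s/c}\integ{y}{s}[s+\tau/\epsilon]<K(y)>>,\\
g_B(\tau,c,\epsilon)&=\integ{s}{-\infty}[0]<\tfrac{1-\omega_1}{c(\omega_1-\omega_2)}e^{\omega_2 s/c}\integ{y}{s}[s+\tau/\epsilon]<K(y)>>.
\end{align*}
The term $g_A$ is handled exactly as in part (i): the inner integral converges pointwise to $1-\int_{-\infty}^{s}K$, the dominator $(C/c)e^{s/(2c)}\|K\|_{L^1}$ works, and $g_A\to 1-\phi_f(c)$. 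The real obstacle is $g_B$, because the prefactor $(1-\omega_1)=O(\epsilon)$ is small while $e^{\omega_2 s/c}$ decays only on the slow scale $s\sim 1/\epsilon$, so naive pointwise limits lose the answer. The remedy is the change of variables $\sigma=\epsilon s$, under which the integrand becomes
$$
\tfrac{(1-\omega_1)/\epsilon}{c(\omega_1-\omega_2)}\,e^{(\omega_2/\epsilon)\sigma/c}\integ{y}{\sigma/\epsilon}[(\sigma+\tau)/\epsilon]<K(y)>\ \longrightarrow\ \tfrac{1}{c}e^{(1+\gamma)\sigma/c}\,\mathbf{1}_{(-\tau,0)}(\sigma),
$$
since for $\sigma\in(-\tau,0)$ both limits of integration escape to $\mp\infty$ and $\int_{\mathbb R}K=1$, while for $\sigma<-\tau$ both escape to $-\infty$ and the tail vanishes by hypothesis (H3).

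Dominated convergence is then established in two pieces. On the bounded interval $(-\tau,0)$ all factors are uniformly bounded in $\epsilon$ (using $e^{\omega_2 s/c}\leq 1$ for $s\leq 0$ and the uniform bounds on $\omega_j$), so a constant dominator suffices. On the tail $\sigma<-\tau$, the crucial estimate from (H3) gives $\bigl|\int_{\sigma/\epsilon}^{(\sigma+\tau)/\epsilon}K(y)\,dy\bigr|\leq\tfrac{\alpha}{\rho}e^{\rho(\sigma+\tau)/\epsilon}$, whose integral in $\sigma$ is $O(\epsilon)$ and thus vanishes. This yields $g_B\to\int_{-\tau}^{0}\tfrac{1}{c}e^{(1+\gamma)\sigma/c}\,d\sigma=\tfrac{1-e^{-(1+\gamma)\tau/c}}{1+\gamma}$, and combining with the $g_A$ limit gives the stated formula. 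Joint continuity in $(\tau,c)$ follows from the same argument applied along sequences $(\tau_n,c_n,\epsilon_n)\to(\tau,c,0)$, with dominators chosen uniform on a compact neighborhood. The hardest step is controlling $g_B$: identifying the slow scale $\sigma=\epsilon s$ so that the apparent $O(\epsilon)$ prefactor, once rescaled, survives to produce the nontrivial slow-manifold contribution $\tfrac{\gamma+e^{-(1+\gamma)\tau/c}}{1+\gamma}$ that encodes the outer trajectory from the takeoff to the landing of the singular back.
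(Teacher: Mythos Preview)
Your argument is correct and the computations check out, but it follows a genuinely different decomposition from the paper's. For part (ii) the paper splits the $x$-integral at $x=0$ (equivalently at $s=-\tau/\epsilon$ after your substitution), integrates the constant piece exactly using $C(0,c,\epsilon)=\gamma/(1+\gamma)$, isolates $\overline{C}(\tau,c,\epsilon):=C(-\tau/\epsilon,c,\epsilon)$ as the main slow term, and then shows three remainder integrals $E_1,E_2,E_3$ vanish by dominated convergence. You instead split by the two exponential modes of $C_x$ and handle the slow $\omega_2$-mode via the rescaling $\sigma=\epsilon s$, which makes the slow-manifold contribution $\tfrac{1}{1+\gamma}(1-e^{-(1+\gamma)\tau/c})$ emerge directly from a finite integral over $(-\tau,0)$.

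Your route is arguably cleaner conceptually: it exhibits the outer time scale explicitly and avoids introducing three separate error terms. The paper's decomposition, on the other hand, is set up with an eye toward the harder follow-up computation of $Dg$ at $\epsilon=0$ (needed for the implicit function theorem), where the same $\overline{C}$, $E_1$, $E_2$, $E_3$ structure is differentiated term by term in the appendix. So the paper's apparently heavier bookkeeping is amortized over the later lemma. One small wording issue: your dominator $(C/c)e^{x/(2c)}\|K\|_{L^1}$ only controls the $\omega_1$-mode; for the $\omega_2$-mode in part (i) you implicitly need $\int_{-\infty}^0\int_{-\infty}^x|K|\,dy\,dx<\infty$ (which holds by (H3)) to conclude that piece is $O(\epsilon)$, so it would be worth making that dominator explicit.
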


\begin{proof}
\begin{itemize}
\item[(i)] For $x\leq 0$, we have $e^{\f{\omega_1 x}{c}}\leq 1$ and $e^{\f{\omega_2 x}{c}}\leq 1$ so $C_x(x,c,\epsilon)$ is bounded by a constant independent of $\epsilon$. Moreover, $\left|\integ{y}{x-\f{\tau}{\epsilon}}[x]<K(y)>\right |\leq \integ{y}{-\infty}[x]<|K(y)|>$, which is integrable over $(-\infty,0]$. Hence, we apply the dominated convergence theorem. Using $\omega_1(\epsilon)\to 1$ and $\omega_2(\epsilon) \to 0$, it is clear that 
\begin{align}\label{eq: limit_Cx}
C_x(x,c,\epsilon) \longrightarrow \f{c}e^{\f{x}{c}},\qquad \integ{y}{x-\f{\tau}{\epsilon}}[x]<K(y)> \longrightarrow \integ{y}{-\infty}[x]<K(y)>.
\end{align}
Bringing the limit inside the integral, by definition, we have $f(\tau,c,\epsilon) \to \phi_f(c)$ as $\epsilon \to 0$. 
\item[(ii)] Write
\begin{align*}
g(\tau,c,\epsilon)&=\left(\int_{-\infty}^0 + \int_0^{\f{\tau}{\epsilon}}\right)C_x\left(x-\f{\tau}{\epsilon},c,\epsilon\right)\left(\integ{y}{x-\f{\tau}{\epsilon}}[x]<K(y)>\right)\,dx \\
&=\integ{x}{-\infty}[0]<C_x\left(x-\f{\tau}{\epsilon},c,\epsilon\right)\left(\integ{y}{x-\f{\tau}{\epsilon}}[x]<K(y)>\right)> \\
&\phantom{=}+\integ{x}{0}[\f{\tau}{\epsilon}]<C_x\left(x-\f{\tau}{\epsilon},c,\epsilon\right)\left[1-\left(\int_{-\infty}^{x-\f{\tau}{\epsilon}} + \int_x^\infty \right)K(y)\,dy\right]> \\
&=\f{\gamma}{1+\gamma}-\overline{C}(\tau,c,\epsilon)-\phi_f(c)+E_1(\tau,c,\epsilon)-E_2(\tau,c,\epsilon)-E_3(\tau,c,\epsilon), \numberthis \label{eq: g_simp}\\
\intertext{where $C(0,c,\epsilon)=\f{\gamma}{1+\gamma}$ is used and}
\overline{C}(\tau,c,\epsilon) &:=C\left(-\f{\tau}{\epsilon},c,\epsilon\right), \numberthis \label{eq: Cbar} \\
E_1(\tau,c,\epsilon)&:=\integ{x}{-\infty}[0]<C_x\left(x-\f{\tau}{\epsilon},c,\epsilon\right)\left(\integ{y}{x-\f{\tau}{\epsilon}}[x]<K(y)>\right)>,\numberthis \label{eq: E1} \\
E_2(\tau,c,\epsilon)&:=\integ{x}{0}[\f{\tau}{\epsilon}]<C_x\left(x-\f{\tau}{\epsilon},c,\epsilon\right)\left(\integ{y}{-\infty}[x-\f{\tau}{\epsilon}]<K(y)>\right)>-\phi_f(c),\numberthis \label{eq: E2} \\
E_3(\tau,c,\epsilon)&:=\integ{x}{0}[\f{\tau}{\epsilon}]<C_x\left(x-\f{\tau}{\epsilon},c,\epsilon\right)\left(\integ{y}{x}[\infty]<K(y)>\right)>.\numberthis \label{eq: E3}
\end{align*}
We recall that $\f{\omega_2(\epsilon)}{\epsilon} \to \omega_2'(0)=1+\gamma$ from \eqref{eq: limits_eig}-\eqref{eq: omega_calcs}. Moreover, the function $C$ is a difference of exponential functions with $e^{-\f{\omega_1(\epsilon) \tau}{c\epsilon}} \to 0$ since $\omega_1(\epsilon) \to 1$. Therefore,
$$\overline{C}(\tau,c,\epsilon)=\frac{1}{\omega_1-\omega_2}\left[\f{1-\omega_2}{\omega_1}e^{-\frac{\omega_1 \tau}{c\epsilon}}-\f{1-\omega_1}{\omega_2}e^{-\frac{\omega_2 \tau}{c\epsilon}}\right]\longrightarrow -\f{1+\gamma}e^{-\f{(1+\gamma)\tau}{c}}.
$$
The proof is complete by showing that error terms $E_1$, $E_2$, $E_3$ vanish as $\epsilon \to 0$.

For $E_1$, the function $C_x$ is bounded. Hence, the integrand is dominated by a constant multiple of $\integ{y}{-\infty}[x]<|K(y)|>$, which is integrable. By dominated convergence theorem, we may bring the limit inside the integral, where $C_x\left(x-\f{\tau}{\epsilon},c,\epsilon\right) \to 0$ as $\epsilon \to 0$.

For $E_2$, after a change of variable, we may write
$$
E_2(\tau,c,\epsilon)=\integ{x}{-\infty}[0]<C_x(x,c,\epsilon)\left(\integ{y}{-\infty}[x]<K(y)>\right)\chi_{(-\f{\tau}{\epsilon},0]}(x)>-\phi_f(c).
$$
Again, by dominated convergence theorem, we may bring the limit inside the integral, where $C_x(x,c,\epsilon)\chi_{(-\f{\tau}{\epsilon},0]}(x) \to \f{e^{\f{x}{c}}}{c}$, and the result follows by the definition of $\phi_f(c)$ in \eqref{eq: phi_def}.

For $E_3$, similar to $E_1$, we find that $C_x$ is bounded so the integrand is dominated by a constant multiple of $\integ{y}{x}[\infty]<|K(y)|>$, which is integrable over $[0,\infty)$. Then $C_x\left(x-\f{\tau}{\epsilon},c,\epsilon\right) \to 0$ as $\epsilon \to 0$. This completes the proof.
\end{itemize}
\end{proof}

\subsubsection{The Continuous Extension of $DF$}
Having established continuous extensions of $f$ and $g$ at $\epsilon=0$, we need to do the same for the partial derivatives. The calculations are tedious applications of dominated convergence theorem, which we leave for Appendix \ref{append_A}. The following lemma summarizes the analysis.
\begin{lemma}\label{lemma: Fpartial_cont}
For $\epsilon=0$, define
\begin{align*}
(i) \qquad f_\tau(\tau,c,0)&:=0, \\
f_c(\tau,c,0)&:= \phi_f'(c),\\
f_\epsilon(\tau,c,0)&:=2\phi_f(c)-\f{c}\integ{x}{-\infty}[0]<|x|\left(e^{\f{x}{c}}+1\right)K(x)>\\
(ii) \qquad g_\tau(\tau,c,0)&=-\f{c}e^{-\f{(1+\gamma)\tau}{c}},\\
g_c(\tau,c,0)&=\f{\tau}{c^2}e^{-\f{(1+\gamma)\tau}{c}}-\phi_f'(c),\\
g_\epsilon(\tau,c,0)&=\left(2-\f{\tau}{c}\right)e^{-\f{(1+\gamma)\tau}{c}}-\f{c}e^{-\f{(1+\gamma)\tau}{c}}\integ{x}{-\infty}[0]<|x|K(x)>\\
&\phantom{=}-\left\lbrace 2\phi_f(c)-\f{c}\integ{x}{-\infty}[0]<|x|\left(e^{\f{x}{c}}+1\right)K(x)> \right\rbrace \\
&\phantom{=}+\left\lbrace \f{c}e^{-\f{(1+\gamma)\tau}{c}}\integ{x}{0}[\infty]<|x|K(x)> \right\rbrace.
\end{align*}
Then $Df$ and $Dg$ are continuous at $\epsilon=0$.
\end{lemma}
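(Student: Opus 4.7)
The plan is to differentiate $f$ and $g$ under the integral sign using Leibniz's rule for the $\tau/\epsilon$-dependent bounds, to justify every interchange by dominated convergence with bounds from (H3) in the same spirit as the proof of \cref{lemma: F_extend}, and then to evaluate the resulting limits using the expansions of $\omega_1, \omega_2$ and their first two derivatives at $\epsilon = 0$ furnished by \eqref{eq: limits_eig}--\eqref{eq: omega_secder}. Continuity of $Df$ and $Dg$ at $\epsilon = 0$ then follows by defining the extensions via the computed formulas; the same dominating functions, chosen uniformly on a compact neighborhood of $(\tau_0, c_f)$, control the limit.

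For the $f$-partials, differentiating with respect to $\tau$ (and the Leibniz portion of $\epsilon$) introduces a factor $K(x - \tau/\epsilon)/\epsilon$. Because $x \leq 0$ and $\tau/\epsilon \to +\infty$, (H3) gives $|K(x - \tau/\epsilon)| \leq \alpha e^{-\rho\tau/\epsilon} e^{\rho x}$, so after integrating the whole expression is bounded by $\frac{M\alpha}{\rho\epsilon} e^{-\rho\tau/\epsilon}$ and vanishes exponentially fast. This immediately yields $f_\tau(\tau, c, 0) = 0$ and kills the Leibniz contribution to $f_\epsilon$. The surviving pieces come from differentiating $C_x$ in its remaining arguments: $f_c(\tau, c, 0)$ matches $\phi_f'(c)$ by direct comparison with \eqref{eq: phip_def} once $\omega_1 \to 1$ and $\omega_2 \to 0$; $f_\epsilon(\tau, c, 0)$ requires a first-order $\epsilon$-expansion of $C_x$ using all of $\omega_1'(0), \omega_2'(0), \omega_1''(0), \omega_2''(0)$, after which collecting terms produces the stated formula.

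For the $g$-partials, the cleanest approach is to differentiate the exact decomposition $g = \frac{\gamma}{1+\gamma} - \overline{C} - \phi_f(c) + E_1 - E_2 - E_3$ established in the proof of \cref{lemma: F_extend}(ii). The dominant contributions come from $\overline{C}(\tau, c, \epsilon) = C(-\tau/\epsilon, c, \epsilon)$. Here $\overline{C}_\tau = -\frac{1}{\epsilon} C_x(-\tau/\epsilon, c, \epsilon)$ looks divergent, but the $e^{-\omega_1\tau/(c\epsilon)}$ term dies exponentially and the surviving $e^{-\omega_2\tau/(c\epsilon)} \to e^{-(1+\gamma)\tau/c}$ piece carries the coefficient $(1 - \omega_1)/(\omega_1 - \omega_2)$, which from \eqref{eq: limits_eig}--\eqref{eq: omega_calcs} expands as $\epsilon + O(\epsilon^2)$ and cancels the $1/\epsilon$, producing $g_\tau(\tau, c, 0) = -\frac{1}{c} e^{-(1+\gamma)\tau/c}$. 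Analogous cancellations handle $\overline{C}_c$ and $\overline{C}_\epsilon$, with the $\epsilon$-derivative requiring the second-order coefficients $\omega_1''(0), \omega_2''(0)$ from \eqref{eq: omega_secder} to capture the $(2 - \tau/c) e^{-(1+\gamma)\tau/c}$ piece. For the error terms, the integrands in $E_1$ and $E_3$ contain $C_x(x - \tau/\epsilon, c, \epsilon)$ whose argument drifts to $-\infty$, so both they and their partials vanish at $\epsilon = 0$ via DCT; after the change of variables $u = x - \tau/\epsilon$ used in \cref{lemma: F_extend}(ii), $E_2$ becomes an $f$-type integral truncated by $\chi_{(-\tau/\epsilon, 0]}$, whose $c$- and $\epsilon$-derivatives recover the remaining $\phi_f'(c)$ piece in $g_c$ and the $\frac{1}{c} \int_{-\infty}^0 |x|(e^{x/c} + 1) K(x)\, dx$ correction in $g_\epsilon$.

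The main obstacle is the bookkeeping for $g_\epsilon$, where at least five distinct contributions interact (the $\overline{C}$-term, $\phi_f$, and each $E_i$) and several carry factors of $\tau/\epsilon^2$ from Leibniz differentiation of $\tau/\epsilon$-dependent bounds. One must verify that these either cancel algebraically (as in the $\overline{C}_\tau$ calculation above, via the vanishing coefficient $(1-\omega_1)$) or are annihilated by the $e^{-\rho\tau/\epsilon}$ decay of $K$. Producing dominating functions that are uniform on a compact neighborhood of $(\tau_0, c_f)$, rather than merely at the base point, is essential to upgrade pointwise convergence to continuity of the full Jacobian at $\epsilon = 0$.
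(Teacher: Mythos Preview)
Your overall strategy matches the paper's: differentiate under the integral, use the decomposition $g=\frac{\gamma}{1+\gamma}-\overline{C}-\phi_f+E_1-E_2-E_3$, and handle each piece separately with dominated convergence and the $\omega_i$-expansions. The treatment of $f$ and of $\overline{C}$ is essentially correct.

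There is, however, a genuine gap in your treatment of $E_1$ and $E_3$. You assert that because their integrands contain $C_x\bigl(x-\tfrac{\tau}{\epsilon},c,\epsilon\bigr)$ with argument drifting to $-\infty$, ``both they and their partials vanish at $\epsilon=0$ via DCT.'' This is false for the $\epsilon$-derivative. The function $C_x(\xi,c,\epsilon)$ has two exponential modes: the fast one $e^{\omega_1\xi/c}$ does vanish as $\xi\to-\infty$, but the slow one $e^{\omega_2\xi/c}$ with $\omega_2\sim(1+\gamma)\epsilon$ gives $e^{\omega_2(x-\tau/\epsilon)/c}\to e^{-(1+\gamma)\tau/c}\neq 0$. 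In $C_x$ itself this slow mode is harmless because its coefficient is $1-\omega_1=O(\epsilon)$; but the $\epsilon$-derivative of $E_1$ (and of $E_3$) produces terms like $\tfrac{\tau}{\epsilon^2}C_{xx}\bigl(x-\tfrac{\tau}{\epsilon},c,\epsilon\bigr)$ and $C_{x\epsilon}\bigl(x-\tfrac{\tau}{\epsilon},c,\epsilon\bigr)$, and in these the slow-mode coefficient no longer vanishes. A direct computation (this is exactly what the paper does) gives
\[
\partial_\epsilon E_1(\tau,c,0)=-\tfrac{1}{c}e^{-(1+\gamma)\tau/c}\int_{-\infty}^{0}|x|K(x)\,dx,\qquad
\partial_\epsilon E_3(\tau,c,0)=-\tfrac{1}{c}e^{-(1+\gamma)\tau/c}\int_{0}^{\infty}|x|K(x)\,dx,
\]
which are precisely the second and last displayed pieces of $g_\epsilon(\tau,c,0)$ in the statement. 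If you drop them, your formula for $g_\epsilon$ is wrong. The fix is to compute $\lim_{\epsilon\to 0}\tfrac{\tau}{\epsilon^2}C_{xx}(x-\tfrac{\tau}{\epsilon},c,\epsilon)$ and $\lim_{\epsilon\to 0}C_{x\epsilon}(x-\tfrac{\tau}{\epsilon},c,\epsilon)$ explicitly (both are finite and nonzero) and then integrate against $\int_{-\infty}^{x}K$ or $\int_{x}^{\infty}K$; the two limits combine to $-\tfrac{1}{c}e^{-(1+\gamma)\tau/c}$ after the $\tau$-dependent pieces cancel.
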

\begin{proof}
See Appendix \ref{append_A}.
\end{proof}

\subsection{Applying the Implicit Function Theorem}\label{subsec: IFT}
We establish the existence of a unique base point solution to the equation
$F(\tau,c,0)=\begin{pmatrix}
\theta \\ \theta
\end{pmatrix}$.
In the first component, from the definition of $f(\tau,c,0)$ in \eqref{eq: f0}, we must solve $\phi_f(c)=\theta$. By \cref{lemma: unique_front}, there is only one positive solution $c_0=c_f$.
 
In the second component, using the definition of $g(\tau,c,0)$ from \eqref{eq: g0}, the equation $g(\tau,c_f,0)=\theta$ reduces to
$$
\f{1+\gamma}\left(\gamma + e^{-\f{(1+\gamma)\tau}{c_0}}\right)-\underbrace{\phi_f(c_0)}_{=\theta}=\theta,
$$
which is solved uniquely by $e^{-\f{(1+\gamma)\tau_0}{c_f}}=2\theta(1+\gamma)-\gamma$, or
\begin{equation}\label{eq: tau0}
\tau_0=-\frac{c_0}{1+\gamma}\ln(2\theta(1+\gamma)-\gamma).
\end{equation}
Note that $0<2\theta(1+\gamma)-\gamma<1$ by (H1). Hence, the base point $(\tau_0,c_0,\epsilon_0)=(\tau_0,c_f,0)$ is well-defined and unique. We are now able to apply a standard application of the implicit function theorem with the following lemma.
\begin{lemma}\label{lemma: IFT}
There exists $\delta_p>0$ and $\epsilon_p>0$ such that for each $\epsilon\leq \epsilon_p$, there exists a unique pair of $C^1$ curves $(\tau(\epsilon),c(\epsilon))$ such that $f(\tau(\epsilon),c(\epsilon),\epsilon)=g(\tau(\epsilon),c(\epsilon),\epsilon)=\theta$. The curves satisfy $|\tau(\epsilon)-\tau_0|<\delta_p$ and $|c(\epsilon)-c_0|<\delta_p$ for all $\epsilon\leq \epsilon_p$. The limits $\tau(\epsilon)\to \tau_0$ and $c(\epsilon)\to c_0$ hold. The derivative of the vector $(\tau(\epsilon),c(\epsilon))^T$ satisfies
\begin{equation}
 \begin{pmatrix}
\tau'(0) \\ c'(0)
\end{pmatrix}
=-J^{-1}(\tau_0,c_0,0)
\begin{pmatrix}
f_\epsilon(\tau_0,c_0,0) \\ g_\epsilon(\tau_0,c_0,0)
\end{pmatrix}.
\end{equation}
\end{lemma}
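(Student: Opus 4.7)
The plan is to invoke the implicit function theorem (IFT) at the base point $(\tau_0,c_f,0)$. All of the analytic ingredients have already been prepared: continuity of $F$ at $\epsilon=0$ is \cref{lemma: F_extend}, continuity of $DF$ at $\epsilon=0$ is \cref{lemma: Fpartial_cont}, and for $\epsilon>0$ smoothness of $F$ is routine via differentiation under the integral sign, where hypothesis (H3) guarantees absolute convergence of every tail integral that appears. Uniqueness of the base point is also already secured: the first component of $F(\tau,c,0)=(\theta,\theta)^T$ reduces to $\phi_f(c)=\theta$, whose only positive root is $c_f$ by \cref{lemma: unique_front}, and substituting $c=c_f$ into the second component determines $\tau_0$ explicitly as in \eqref{eq: tau0}.

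The key computation is to check that $\det J\neq 0$. Inserting the values supplied by \cref{lemma: Fpartial_cont} and exploiting $f_\tau(\tau_0,c_f,0)=0$, the $g_c$ term drops out and
\begin{equation*}
\det J \;=\; f_\tau\,g_c - f_c\,g_\tau \;=\; -\phi_f'(c_f)\cdot\left(-\f{c_f}e^{-\f{(1+\gamma)\tau_0}{c_f}}\right) \;=\; \f{\phi_f'(c_f)}{c_f}\,e^{-\f{(1+\gamma)\tau_0}{c_f}}.
\end{equation*}
By \cref{lemma: unique_front}, $\phi_f'(c_f)<0$, while $c_f>0$ and the exponential factor is positive, so $\det J<0$ and in particular $J$ is invertible.

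A direct application of the IFT then furnishes $\delta_p,\epsilon_p>0$ and unique $C^1$ curves $\tau(\epsilon),\,c(\epsilon)$ on $[0,\epsilon_p]$ satisfying $F(\tau(\epsilon),c(\epsilon),\epsilon)=(\theta,\theta)^T$ together with $|\tau(\epsilon)-\tau_0|<\delta_p$ and $|c(\epsilon)-c_f|<\delta_p$. Continuity of the curves at $\epsilon=0$ yields the limits $\tau(\epsilon)\to\tau_0$ and $c(\epsilon)\to c_f$. Differentiating the identity $F(\tau(\epsilon),c(\epsilon),\epsilon)\equiv(\theta,\theta)^T$ at $\epsilon=0$ and solving for $(\tau'(0),c'(0))^T$ produces the claimed formula
\begin{equation*}
\begin{pmatrix}\tau'(0)\\ c'(0)\end{pmatrix} \;=\; -J^{-1}(\tau_0,c_f,0)\begin{pmatrix}f_\epsilon(\tau_0,c_f,0)\\ g_\epsilon(\tau_0,c_f,0)\end{pmatrix}.
\end{equation*}

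In my view the only genuinely delicate aspect of this final lemma is that $\epsilon$ ranges over the half-neighborhood $[0,\epsilon_p]$ rather than an open interval, so the partials with respect to $\epsilon$ at the base point are one-sided limits. The heavy lifting for that issue has already been absorbed into \cref{lemma: F_extend,lemma: Fpartial_cont}, which establish that $F$ extends to a $C^1$ function on a product neighborhood that is closed at $\epsilon=0$; the standard Banach fixed-point proof of the IFT only requires uniform continuity of $DF$ near the base point together with invertibility of the $(\tau,c)$-block, both of which hold on this half-neighborhood. Every other step — the uniqueness of the base point, the nondegeneracy of $J$, and the derivation of the derivative formula — is essentially a bookkeeping exercise built directly on the preparatory lemmas.
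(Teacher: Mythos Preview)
Your proposal is correct and follows essentially the same route as the paper: verify $F$ is $C^1$ via the preparatory lemmas, compute $\det J=-f_c\,g_\tau=\tfrac{1}{c_f}\phi_f'(c_f)e^{-(1+\gamma)\tau_0/c_f}<0$ using $f_\tau=0$ and \cref{lemma: unique_front}, then invoke the implicit function theorem. Your added remark about the one-sided neighborhood in $\epsilon$ is a nice clarification that the paper leaves implicit.
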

\begin{proof}
Clearly $F$ is $C^1$ at and around the base point, as demonstrated in this section. Moreover, by \cref{lemma: Fpartial_cont}, we have formulas for all partial derivatives of $f$ and $g$ at $(\tau_0,c_f,0)$. Since $f_\tau(\tau_0,c_f,0)=0$, we have 
\begin{align*}
\det J&=\begin{vmatrix}
0 &f_c(\tau_0,c_f,0)\\
g_\tau(\tau_0,c_f,0) &g_c(\tau_0,c_f,0)
\end{vmatrix}=-f_c(\tau_0,c_f,0)g_\tau(\tau_0,c_f,0) \numberthis \label{eq: det_J} \\
&=\f{c_f}\phi_f'(c_f)e^{-\f{(1+\gamma)\tau_0}{c_f}},
\end{align*}
which is negative since $\phi_f'(c_f)<0$ by \cref{lemma: unique_front}. Hence, the hypotheses for the implicit function theorem are met and the result follows. The formula for $(\tau'(0),c'(0))^T$ is routine. This completes the proof.
\end{proof}
Specifically, using $\phi_f(c_f)=\theta$ and again using $f_\tau(\tau_0,c_f,0)=0$,
\begin{align*}
\tau'(0)&=-\f{\det J}\Bigl\lbrace g_c(\tau_0,c_f,0)f_\epsilon(\tau_0,c_f,0)-f_c(\tau_0,c_f,0)g_\epsilon(\tau_0,c_f,0) \Bigr\rbrace,\numberthis \label{eq: taup}\\
c'(0)&=\f{\det J}g_\tau(\tau_0,c_f,0)f_\epsilon(\tau_0,c_f,0)=-\f{f_\epsilon(\tau_0,c_f,0)}{f_c(\tau_0,c_f,0)}\numberthis \label{eq: cp} \\
&=-\f{\phi_f'(c_f)}\left\lbrace 2\theta-\f{c_f}\integ{x}{-\infty}[0]<|x|\left(e^{\f{x}{c_f}}+1\right)K(x)>\right\rbrace
\end{align*}
by \cref{lemma: Fpartial_cont}. This concludes the proof of \cref{thm: first_order} and a description of $(a(\epsilon),c(\epsilon))$ in \cref{thm: E}.
\section{Threshold Requirements and Closeness of $\mathcal{S}_\epsilon$ to $\mathcal{S}_0$}\label{sec: thres}
In this section, we complete the proof of \cref{thm: E}. Specifically, we show that for $\epsilon$ sufficiently small, $U_\epsilon$ satisfies $U_\epsilon(z)<\theta$ on $(-\infty,0)\cup (a(\epsilon),\infty)$ and $U_\epsilon(z)>\theta$ on $(0,a(\epsilon))$. Moreover, we prove the stronger claim that $d_H(\mathcal{S}_\epsilon,\mathcal{S}_0)\to 0$, where $\mathcal{S}_\epsilon=\Bigl\{(U_\epsilon(z),Q_\epsilon(z)):z\in \R \Bigr\}$. We recall
$$
d_H(\mathcal{S}_\epsilon,\mathcal{S}_0)=\max\left\lbrace \sup_{(U,Q)\in \mathcal{S}_0}d\Bigl(\mathcal{S}_\epsilon,(U,Q)\Bigr),\sup_{(U_\epsilon,Q_\epsilon)\in \mathcal{S}_\epsilon}d\Bigl((U_\epsilon,Q_\epsilon),\mathcal{S}_0\Bigr) \right\rbrace
$$
and $d$ is the standard Euclidean metric. In other words, $\mathcal{S}_\epsilon$ and $\mathcal{S}_0$ are mutually as close to one another as we desire as $\epsilon\to 0$. Clearly, $\mathcal{S}_0$ is compact, and since $(U_\epsilon(\pm \infty),Q_\epsilon(\pm \infty))=(0,0)\in \mathcal{S}_0$, the supremums can be replaced by maximums.

We note that $\mathcal{S}_\epsilon$ and $\mathcal{S}_0$ describe sets, whereas the sub and super threshold regions describe dynamics in $z$. Moreover, since the system is nonlocal, points in $(U,Q)$ space can repeat. If $d_H(\mathcal{S}_\epsilon,\mathcal{S}_0) \to 0$, then near $\mathcal{M}_0^R$ and $\mathcal{M}_0^L$, the function $U_\epsilon(z)$ clearly stays away from the threshold.  However, near $\mathcal{F}_0$ and $\mathcal{B}_0$, we need to ensure $U_\epsilon(z)$ only crosses the threshold one time each. We guarantee this by showing $U_\epsilon'(z)\approx U_f'(z)$ and $U_\epsilon'(z)\approx U_b'(z-a(\epsilon))$ on large segments of the front and back, respectively. Therefore, $U_\epsilon(z)$ crosses the threshold transversely only twice.
\subsection{Preliminaries and Proof Outline}
Firstly, we define $(U,Q)\in \R^2$ points in the following way:
\begin{align*}
s_\epsilon(z)&:=(U_\epsilon(z),Q_\epsilon
(z))\in \mathcal{S}_\epsilon, \\
x_f(z)&:=(U_f(z),0)\in \mathcal{F}_0, \\
x_b(z)&:=(U_b(z),1-2\theta)\in \mathcal{B}_0, \\
m_R(Q)&:=(1-Q,Q)\in \widehat{\mathcal{M}}_0^R, \\
m_L(Q)&:=(-Q,Q)\in \widehat{\mathcal{M}}_0^L.
\end{align*}
We will also denote $a_\epsilon=a(\epsilon)$, $\tau_\epsilon=\tau(\epsilon)$,  and $c_\epsilon=c(\epsilon)$.
\subsubsection*{Sketch of Proof}
Let $\delta>0$. We show there exists $\epsilon_0=\epsilon(\delta) \ll 1$ such that $d_H(\mathcal{S}_\epsilon,\mathcal{S}_0)<3\delta$ for all $\epsilon<\epsilon_0$. Thus, the convergence $d_H(\mathcal{S}_\epsilon,\mathcal{S}_0)\to 0$ holds while the solutions satisfy the threshold requirements.

Choose a large, fixed positive constant $z_0=z_0(\delta)$ which satisfies
\begin{align*}
&\left(\int_{-\infty}^{-z_0}+\int_{z_0}^\infty \right)|K(x)|\,\mathrm{d}x<\delta,\numberthis \label{eq: z0_first} \\
&d\Bigl(x_f(z),m_L(0)\Bigr)<\f{\delta}{4} \text{ for }z<-z_0, \qquad d\Bigl( x_f(z),m_R(0) \Bigr)<\f{\delta}{4} \text{ for }z>z_0, \numberthis \label{eq: z0_second} \\
&d\Bigl(x_b(z-a_\epsilon),m_R(1-2\theta) \Bigr)<\f{\delta}{4} \text{ for } z-a_\epsilon<-z_0, \numberthis \label{eq: z0_third} \\
&d\Bigl(x_b(z-a_\epsilon),m_L(1-2\theta) \Bigr)<\f{\delta}{4} \text{ for } z-a_\epsilon>z_0.
\end{align*}
Since $U_b(z)=2\theta-U_f(z)$, estimates in \eqref{eq: z0_second} and \eqref{eq: z0_third} are equivalent. Essentially, we demand the asymptotic tails of the front and back are near the corner points of $\mathcal{S}_0$.

For $\epsilon$ sufficiently small (so $a_\epsilon-z_0>z_0$), we can create a disjoint union of the real numbers in the form
$$
\R=R_1(z_0)\cup R_2(\epsilon,z_0) \cup R_3(\epsilon,z_0) \cup R_4(\epsilon,z_0),
$$
where
\begin{equation*}
R_1(z_0)=(-\infty,z_0],\qquad R_2(\epsilon,z_0)=(z_0,a_\epsilon-z_0),\qquad R_3(\epsilon,z_0)=[a_\epsilon-z_0,a_\epsilon+z_0],\qquad R_4(\epsilon,z_0)=(a_\epsilon+z_0,+\infty).
\end{equation*}
As $\epsilon \to 0$, but $\epsilon \neq 0$, all intervals except $R_1$ shift as $a_\epsilon\to +\infty$, but the partitioning of the real numbers remains. For notation purposes, we will write
$$
\mathcal{S}_\epsilon=\mathcal{S}_\epsilon^1 \cup \mathcal{S}_\epsilon^2 \cup \mathcal{S}_\epsilon^3 \cup \mathcal{S}_\epsilon^4,
$$
where $\mathcal{S}_\epsilon^i$ represents $\mathcal{S}_\epsilon$ restricted to region $R_i$.

In order to understand our strategy, consider the following proposition, which shows that we can assess the closeness of $\mathcal{S}_\epsilon$ to $\mathcal{S}_0$ by assessing the closeness on all four pieces separately. This is crucial, especially near the corners of $\mathcal{S}_0$, where $\mathcal{S}_\epsilon$ takes on a mixture of fast and slow dynamics.
\begin{proposition}\label{prop: dH_regions}
Suppose $\max \Bigl\{d_H(\mathcal{S}_\epsilon^1,\mathcal{F}_0),\, d_H(\mathcal{S}_\epsilon^2,
\mathcal{M}_0^R),\, d_H(\mathcal{S}_\epsilon^3,\mathcal{B}_0),\, d_H(\mathcal{S}_\epsilon^4,\mathcal{M}_0^L)\Bigr\}<\eta$. Then $d_H(\mathcal{S}_\epsilon,\mathcal{S}_0)< \eta$.
\end{proposition}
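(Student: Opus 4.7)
The plan is to treat this as a direct bookkeeping consequence of the definition of Hausdorff distance under finite unions. The general fact I would invoke (and quickly verify) is that whenever two sets admit matching decompositions $A=\bigcup_{i=1}^{4} A_i$ and $B=\bigcup_{i=1}^{4} B_i$, one has $d_H(A,B)\leq \max_{1\leq i\leq 4} d_H(A_i,B_i)$. Here I would take $A_i=\mathcal{S}_\epsilon^i$ and let $B_1,B_2,B_3,B_4$ equal $\mathcal{F}_0,\mathcal{M}_0^R,\mathcal{B}_0,\mathcal{M}_0^L$, respectively, noting that by \eqref{eq: S_0_def} the four $B_i$ cover $\mathcal{S}_0$, while the $\mathcal{S}_\epsilon^i$ cover $\mathcal{S}_\epsilon$ by virtue of the $z$-partition $R_1\cup R_2\cup R_3\cup R_4=\R$.

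The verification splits into the two pieces of the Hausdorff metric. First, I would take any $s\in \mathcal{S}_\epsilon$; since $\mathcal{S}_\epsilon=\bigcup_i \mathcal{S}_\epsilon^i$, the point $s$ lies in some $\mathcal{S}_\epsilon^i$, and then
$$
d(s,\mathcal{S}_0) \;\leq\; d(s,B_i) \;\leq\; \sup_{s'\in \mathcal{S}_\epsilon^i} d(s',B_i) \;\leq\; d_H(\mathcal{S}_\epsilon^i,B_i) \;<\; \eta.
$$
Taking the supremum over $s\in \mathcal{S}_\epsilon$ and using that the maximum over a finite index set is attained (so strict inequality is preserved), I obtain $\sup_{s\in \mathcal{S}_\epsilon} d(s,\mathcal{S}_0)<\eta$. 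The symmetric argument — starting with $s_0\in \mathcal{S}_0$, locating the $B_i$ to which it belongs, and applying the reverse half of $d_H(\mathcal{S}_\epsilon^i,B_i)<\eta$ to produce a point of $\mathcal{S}_\epsilon^i\subseteq \mathcal{S}_\epsilon$ within $\eta$ of $s_0$ — handles the other half. Combining the two bounds yields $d_H(\mathcal{S}_\epsilon,\mathcal{S}_0)<\eta$, as desired.

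There is no substantive obstacle in this proposition: the only place one needs to pay attention is the preservation of strict inequality under a finite max, which is automatic. Its role is purely organizational. The real analytic work — actually establishing each of the four hypotheses $d_H(\mathcal{S}_\epsilon^1,\mathcal{F}_0)<\eta$, $d_H(\mathcal{S}_\epsilon^2,\mathcal{M}_0^R)<\eta$, $d_H(\mathcal{S}_\epsilon^3,\mathcal{B}_0)<\eta$, $d_H(\mathcal{S}_\epsilon^4,\mathcal{M}_0^L)<\eta$ — is what carries the geometric and singular-perturbation content and will be handled in the remainder of \cref{sec: thres} using the closed-form expressions \eqref{eq: U_formal}--\eqref{eq: Qp_formal}, the limits derived in \cref{sec: calc}, and the exponential decay hypothesis (H3).
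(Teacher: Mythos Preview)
Your proposal is correct and follows essentially the same approach as the paper: both arguments exploit the decompositions $\mathcal{S}_\epsilon=\bigcup_i \mathcal{S}_\epsilon^i$ and $\mathcal{S}_0=\mathcal{F}_0\cup\mathcal{M}_0^R\cup\mathcal{B}_0\cup\mathcal{M}_0^L$ and bound each half of the Hausdorff distance by locating a point in the appropriate piece and invoking the corresponding hypothesis $d_H(\mathcal{S}_\epsilon^i,B_i)<\eta$. Your remark about the finite max preserving strict inequality is a nice explicit touch that the paper leaves implicit.
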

\begin{proof}
First, let $(U,Q)\in \mathcal{S}_0$. If, say, $(U,Q)\in \mathcal{F}_0$, then 
$$
d\Bigl(\mathcal{S}_\epsilon,(U,Q)\Bigr)\leq d\Bigl(\mathcal{S}_\epsilon^1,(U,Q)\Bigr)\leq d_H(\mathcal{S}_\epsilon^1,\mathcal{F}_0)<\eta.
$$
Applying a similar argument to the cases $(U,Q)\in \mathcal{M}_0^R \,\cup \, \mathcal{B}_0\, \cup\,  \mathcal{M}_0^L$, it follows that $\displaystyle \sup_{(U,Q)\in \mathcal{S}_0} d\Bigl(\mathcal{S}_\epsilon,(U,Q)\Bigr)< \eta$.

On the other hand, let $s_\epsilon(z)\in \mathcal{S}_\epsilon$. If, say, $s_\epsilon(z)\in \mathcal{S}_\epsilon^1$, then
$$
d(s_\epsilon(z),\mathcal{S}_0)\leq d(s_\epsilon(z),\mathcal{F}_0)\leq d_H(\mathcal{S}_\epsilon^1,\mathcal{F}_0)<\eta.
$$
Applying a similar argument to the cases $s_\epsilon(z)\in \mathcal{S}_\epsilon^i$ for $i=2,\,3,\, 4$ we find \\
$\sup_{z\in \R}\, d(s_\epsilon(z),\mathcal{S}_0) < \eta$. Then by the definition, of $d_H$, we find $d_H(\mathcal{S}_\epsilon,\mathcal{S}_0)< \eta$.
\end{proof}

We proceed as follows (decreasing the $\epsilon$ tolerance at each step, if necessary). See Figures \ref{fig: Pulse_partition} and \ref{fig: Haus}.
\begin{itemize}
\item[$R_1$:] We show that $d(s_\epsilon(z),x_f(z))\to 0$ and $d(s_\epsilon'(z),x_f'(z))\to 0$ uniformly. Hence, there exists $\epsilon_1$ such that $U_\epsilon(z)<\theta$ on $(-\infty,0)$ and $U_\epsilon(z)>\theta$ on $(0,z_0]$ 
for all $\epsilon<\epsilon_1$. We demand $\epsilon_1$ be small enough so that so that $d_H(\mathcal{S}_\epsilon^1,\mathcal{F}_0)<\delta$ and $s_\epsilon(z_0) \in B_d(\mathcal{M}_0^R,\delta)$, the ball of radius $\delta$ around $\mathcal{M}_0^R$.
\item[$R_2$:] After the above occurs in $R_1$, we show there exists $\epsilon_2\leq \epsilon_1$ such that $s_\epsilon(z)\in B_d(\mathcal{M}_0^R,3\delta)$ for all $z\in R_2$ and $\epsilon<\epsilon_2$, as $Q_\epsilon(z)$ increases. This also implies that $U_\epsilon(z)>\theta$ for $z\in R_2$. We also show $d_H(\mathcal{S}_\epsilon^2,\mathcal{M}_0^R)<3\delta$ so all points $m_R(Q)$ have a nearby point $s_\epsilon(z)\in \mathcal{S}_\epsilon^2$.
\item[$R_3$:] We take a similar approach on $R_3$ as we did on $R_1$, except instead showing $s_\epsilon(z)$ (and its derivative) are uniformly close to $x_b(z-a_\epsilon)$, which is the back shifted to cross the threshold at $z=a_\epsilon$. Since $R_3$ slides with $\epsilon$, we cannot say the solution converges to the back uniformly on $R_3$. However, by introducing the characteristic function $\chi_{R_3}(z,\epsilon)$, we are able to prove that $d(s_\epsilon(z),x_b(z-a_\epsilon))\chi_{R_3}(z,\epsilon) \to 0$ and $d(s_\epsilon'(z),x_b'(z-a_\epsilon))\chi_{R_3}(z,\epsilon) \to 0$ uniformly over $\R$. Hence, there exists $\epsilon_3\leq \epsilon_2$ such that $U_\epsilon(z)>\theta$ on $[a_\epsilon-z_0,a_\epsilon)$ and $U_\epsilon(z)<\theta$ on $(a_\epsilon,a_\epsilon+z_0]$ for all $\epsilon<\epsilon_3$ and $z\in R_3(\epsilon,z_0)$. For $\epsilon_3$ small enough, we show $d_H(\mathcal{S}_\epsilon^3,\mathcal{B}_0)< \delta$ and $s_\epsilon(a_\epsilon+z_0) \in B_d(\mathcal{M}_0^L,\delta)$.
\item[$R_4$:] Similar to $R_2$, we show that there exists $\epsilon_4\leq \epsilon_3$ such that for all $\epsilon<\epsilon_4$, starting with $s_\epsilon(a_\epsilon+z_0)\in B_d(\mathcal{M}_0^L,\delta)$, the solution stays close to this ball during downward travel of mixed time scales. We divide the region further to emphasize that during strictly slow travel, the solution converges to the line $U=-Q$. As a result, $U_\epsilon(z)<\theta$ and $d_H(\mathcal{S}_\epsilon^4,\mathcal{M}_0^L)<3\delta$. Then we complete the proof of \cref{thm: E}.
\end{itemize}
\begin{figure}[H]
\centering
\includegraphics[width=100mm]{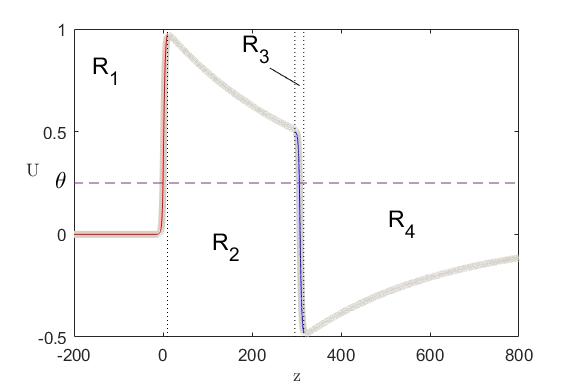}
\caption{A schematic of the pulse solution (grey), broken down by region. The pulse is close to the front (red) and back (blue) overlaying regions $R_1$ and $R_3$, respectively}
\label{fig: Pulse_partition}
\end{figure}
\begin{figure}[H]
\centering
\includegraphics[width=100mm]{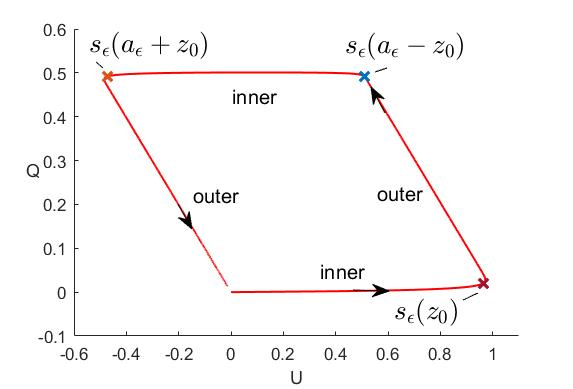}
\caption{The pulse as a homoclinic orbit. The pivot points $s_\epsilon(z_0)$, $s_\epsilon(a_\epsilon-z_0)$, and $s_\epsilon(a_\epsilon+z_0)$ are used to define the inner regions $R_1$ and $R_3$}
\label{fig: Haus}
\end{figure}

\subsection{Inner Region $R_1$}
Using the definition of $C_x$, we may write $U_\epsilon(z)$ in the form
\begin{equation}
U_\epsilon(z)=U_f(z)+A_1(z,\epsilon)-A_2(z,\epsilon), \label{eq: U_unif}
\end{equation}
where
\begin{align*}
A_1(z,\epsilon)&:=\integ{x}{-\infty}[z]<C_x(x-z,c_\epsilon,\epsilon)\left(\integ{y}{-\infty}[x]<K(y)>\right)>-U_f(z) \numberthis \label{eq: A_1} \\
&= \integ{x}{-\infty}[0]<\left[C_x(x,c_\epsilon,\epsilon)-\f{c_f}e^{\f{x}{c_f}}\right]\left(\integ{y}{-\infty}[x+z]<K(y)>\right)>,\\
A_2(z,\epsilon),\epsilon)&:=\integ{x}{-\infty}[z]<C_x(x-z,c_\epsilon,\epsilon)\left(\integ{y}{-\infty}[x-a_\epsilon]<K(y)>\right)>.\numberthis \label{eq: A_2}
\end{align*}
\begin{proposition}\label{prop: U_Uf_unif}
For $z_0>0$, we have $U_\epsilon(z) \to U_f(z)$ and $U_\epsilon'(z) \to U_f'(z)$ uniformly over $(-\infty,z_0]$ as $\epsilon\to 0$. Moreover, $Q_\epsilon\to 0$ and $Q_\epsilon' \to 0$ uniformly over the same interval.
\end{proposition}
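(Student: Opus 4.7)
The plan is to estimate each of $U_\epsilon-U_f$, $U_\epsilon'-U_f'$, $Q_\epsilon$, and $Q_\epsilon'$ by subtracting from the explicit formulas \eqref{eq: U_formal}--\eqref{eq: Qp_formal} a leading-order piece matching the corresponding front quantity, and then controlling the remainder uniformly on $(-\infty,z_0]$. The identity $U_\epsilon=U_f+A_1-A_2$ in \eqref{eq: U_unif} is already such a decomposition; analogous splittings for $U_\epsilon'$ (by adding and subtracting $c_f^{-1}e^{(x-z)/c_f}K(x)$ inside \eqref{eq: Up_formal}) and for $Q_\epsilon,Q_\epsilon'$ (whose leading-order limit is $0$, since $D_x$ in \eqref{eq: Dx} carries an explicit prefactor $\epsilon$) are immediate. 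In every remainder integral the $z$-dependence enters only through a translate of $K$ or of $\int_{-\infty}^{\cdot}K$, whose amplitudes are controlled globally by $\sup_{\R}|K|\leq\alpha$ and $\int_{\R}|K|=1$, so uniformity in $z\leq z_0$ is built in as soon as the $x$-integrals are handled.

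First, for $A_1(z,\epsilon)$ I would split $C_x(x,c_\epsilon,\epsilon)-c_f^{-1}e^{x/c_f}$ into its $\omega_1$-branch $B_1(x,\epsilon)=\tfrac{1-\omega_2}{c_\epsilon(\omega_1-\omega_2)}e^{\omega_1 x/c_\epsilon}-\tfrac{1}{c_f}e^{x/c_f}$ and its $\omega_2$-branch $B_2(x,\epsilon)=\tfrac{1-\omega_1}{c_\epsilon(\omega_1-\omega_2)}e^{\omega_2 x/c_\epsilon}$. Coefficient and rate of $B_1$ are $C^1$ in $\epsilon$ with common limit $1/c_f$, so a mean-value estimate together with $\int_{-\infty}^0|x|e^{\lambda x}\,dx=\lambda^{-2}$ yields $\int_{-\infty}^0|B_1(x,\epsilon)|\,dx=O(\epsilon)$, and since $|\int_{-\infty}^{x+z}K|\leq 1$ the $B_1$-contribution to $A_1$ is $O(\epsilon)$ uniformly in $z$. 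For $B_2$ the prefactor is $O(\epsilon)$ but $\int_{-\infty}^0 e^{\omega_2 x/c_\epsilon}\,dx=O(\epsilon^{-1})$, so the naive product bound fails. Here I invoke (H3): for $x+z<0$ one has $|\int_{-\infty}^{x+z}K|\leq \alpha\rho^{-1}e^{\rho(x+z)}$, and splitting the $x$-integral at $x=-z_0$ bounds the $B_2$-contribution by $O(\epsilon)\bigl[e^{\rho z_0}(\omega_2/c_\epsilon+\rho)^{-1}+z_0\bigr]=O(\epsilon)$ uniformly in $z\leq z_0$.

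Next, for $A_2(z,\epsilon)$, the change of variable $\xi=x-z$ gives upper limit $\xi+z-a_\epsilon\leq z_0-a_\epsilon$, and (H3) yields $|\int_{-\infty}^{\xi+z-a_\epsilon}K|\leq Ce^{\rho(\xi+z-a_\epsilon)}$. Since $|C_x(\xi,c_\epsilon,\epsilon)|$ is uniformly bounded on $(-\infty,0]$ (both exponentials are $\leq 1$ and the prefactors stay bounded), one obtains $|A_2(z,\epsilon)|\leq C'e^{-\rho(a_\epsilon-z_0)}\to 0$ uniformly. The estimate for $U_\epsilon'(z)$ follows the same template applied to \eqref{eq: Up_formal}: the $K(x)$-piece mirrors $A_1$, with $\sup_{\R}|K|\leq\alpha$ replacing $\int_{\R}|K|$ and the exponential tail of $K$ itself playing the role used above in the $B_2$-contribution; the $K(x-a_\epsilon)$-piece mirrors $A_2$, since $|K(x-a_\epsilon)|\leq\alpha e^{-\rho(a_\epsilon-x)}$ is already exponentially small for $x\leq z_0$. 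For $Q_\epsilon$ and $Q_\epsilon'$, the global factor $\epsilon$ in \eqref{eq: Dx} combined with the same (H3)-based tail bounds on the $\omega_2$-branch immediately give $|Q_\epsilon(z)|,|Q_\epsilon'(z)|=O(\epsilon)$ uniformly on $(-\infty,z_0]$.

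The main obstacle throughout is the slowly decaying branch $e^{\omega_2 x/c_\epsilon}$ present in both $C_x$ and $D_x$: because $\omega_2(\epsilon)=O(\epsilon)$, its $L^1$-norm on $(-\infty,0]$ blows up like $\epsilon^{-1}$, so no purely pointwise-in-$\epsilon$ argument can succeed. The resolution, used uniformly across all four estimates, is to always pair this branch with a kernel factor and invoke (H3), since $\int_{-\infty}^0 e^{\omega_2 x/c_\epsilon}e^{\rho x}\,dx\leq\rho^{-1}$ is bounded independently of $\epsilon$; this is the single technical mechanism that makes the uniform estimates go through and explains structurally why exponential decay of $K$ is needed rather than merely $K\in L^1$.
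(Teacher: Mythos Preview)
Your proof is correct but follows a different route from the paper. For $A_1$ the paper simply bounds $|A_1(z,\epsilon)|$ by $\int_{-\infty}^0 |C_x(x,c_\epsilon,\epsilon)-c_f^{-1}e^{x/c_f}|\int_{-\infty}^{x+z_0}|K|\,dx$ and applies dominated convergence, the dominating function $C\int_{-\infty}^{x+z_0}|K|$ being integrable on $(-\infty,0]$ (which itself quietly uses (H3)); for $U_\epsilon'$ it avoids the integral formula entirely and instead subtracts the ODE $c_\epsilon U_\epsilon'+U_\epsilon+Q_\epsilon=\int_{z-a_\epsilon}^z K$ from $c_f U_f'+U_f=\int_{-\infty}^z K$, reducing the derivative estimate to the already-established ones for $U_\epsilon-U_f$, $Q_\epsilon$, and the tail $\int_{-\infty}^{z_0-a_\epsilon}|K|$. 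Your branch-by-branch splitting instead yields explicit $O(\epsilon)$ rates and isolates precisely where exponential decay of $K$ is indispensable (the $\omega_2$-branch, whose $L^1(-\infty,0]$ norm blows up like $\epsilon^{-1}$); the paper's approach is shorter and the ODE trick for $U_\epsilon'$ is a reusable device that reappears in regions $R_3$ and $R_4$. One small slip: you write $\int_\R|K|=1$, but only $\int_\R K=1$ is assumed; the correct bound is $\int_\R|K|\le 2\alpha/\rho<\infty$, which is all you need.
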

\begin{proof}
We show $A_1$ and $A_2$ converge to zero uniformly over $(-\infty,z_0]$. For $A_1$, we have
\begin{align*}
|A_1(z,\epsilon)|&\leq \integ{x}{-\infty}[0]<\left|C_x(x,c_\epsilon,\epsilon)-\f{c_f}e^{\f{x}{c_f}}\right|\left(\integ{y}{-\infty}[x+z_0]<|K(y)|>\right)>.
\end{align*}
The integral is dominated by a constant multiple of $\integ{x}{-\infty}[0]<\integ{y}{-\infty}[x+z_0]<|K(y)|>> <+\infty$ so we may take the limit inside the integral, which converges to zero by \eqref{eq: limit_Cx}.

For $A_2$, we see that $C_x$ is bounded for $x\leq z$ so $A_2$ is bounded by a constant multiple of $\integ{x}{-\infty}[z_0]<\integ{y}{-\infty}[x-a_\epsilon]<|K(y)|>> \to 0$.

For $Q_\epsilon$, recalling its definition, as well as that of $D_x$, there are no difficulties establishing that $Q_\epsilon$ is uniformly bounded by a constant multiple of $\epsilon \left(\integ{x}{-\infty}[0]<\integ{y}{-\infty}[x+z_0]<|K(y)|>>\right) \to 0$. Obviously, $Q_\epsilon'$ is uniformly $O(\epsilon)$ over $\R$.
Finally, from 
\begin{align*}
&c_\epsilon U_\epsilon'(z)+U_\epsilon(z)+Q_\epsilon(z)=\integ{x}{z-a_\epsilon}[z]<K(x)>=\left(\int_{-\infty}^z-\int_{-\infty}^{z-a_\epsilon}\right)K(x)\,\mathrm{d}x,\\
&c_f U_f'(z)+U_f(z)=\integ{x}{-\infty}[z]<K(x)>,
\end{align*}
it follows that
\begin{equation}
\left|c_\epsilon U_\epsilon'(z)-c_f U_f'(z)\right|\leq |U_\epsilon(z)-U_f(z)|+|Q_\epsilon(z)|+ \integ{x}{-\infty}[z_0-a_\epsilon]<|K(x)|>,
\end{equation}
and with $c_\epsilon\to c_f$, clearly $U_\epsilon' \to U_f'$ uniformly.
\end{proof}
Hence, for a given fixed $z_0>0$, we may choose $\epsilon_1\leq \epsilon_p$ small enough so that $U_\epsilon(z)<\theta$ on $(-\infty,0)$ and $U_\epsilon(z)>\theta$ on $(0,z_0]$ for all $\epsilon<\epsilon_1$. If necessary, reduce $\epsilon_1$ so that $d(s_\epsilon(z),x_f(z))<\f{\delta}{2}$ for all $z\in R_1$. Then we conclude our analysis on $R_1$ with the following proposition.
\begin{proposition}\label{prop: R1_Hausdorff}
The estimate $d_H(\mathcal{S}_\epsilon^1,\mathcal{F}_0)< \delta$ holds for all $\epsilon<\epsilon_1$. Moreover, $s_\epsilon(z_0)\in B_d(\mathcal{M}_0^R,\delta)$.
\end{proposition}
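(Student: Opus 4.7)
\emph{Proof plan.} The plan is to unwind $d_H(\mathcal{S}_\epsilon^1,\mathcal{F}_0)$ into its two defining suprema and control each using \cref{prop: U_Uf_unif} together with the corner-clustering condition \eqref{eq: z0_second}. The direction $\sup_{s\in\mathcal{S}_\epsilon^1} d(s,\mathcal{F}_0)$ is immediate from the tolerance already built into $\epsilon_1$: for every $z\in R_1$, the matched partner $x_f(z)\in\mathcal{F}_0$ satisfies $d(s_\epsilon(z),x_f(z))<\f{\delta}{2}$, so this supremum is at most $\f{\delta}{2}<\delta$.

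The delicate direction is $\sup_{\zeta\in\R} d(x_f(\zeta),\mathcal{S}_\epsilon^1)$, since $\mathcal{F}_0$ is parameterized by all of $\R$ but $\mathcal{S}_\epsilon^1$ only uses $z\le z_0$. I would split at $\zeta=z_0$. For $\zeta\le z_0$ the matched pairing $\zeta\leftrightarrow s_\epsilon(\zeta)$ again gives a bound of $\f{\delta}{2}$. For $\zeta>z_0$, I would take $s_\epsilon(z_0)\in\mathcal{S}_\epsilon^1$ as a single universal witness and apply the three-term triangle inequality
\[
d(x_f(\zeta),s_\epsilon(z_0))\le d(x_f(\zeta),m_R(0))+d(m_R(0),x_f(z_0))+d(x_f(z_0),s_\epsilon(z_0)),
\]
where the first term is $<\f{\delta}{4}$ by \eqref{eq: z0_second}, the second is $\le \f{\delta}{4}$ by the same bound passed to $z=z_0$ by continuity of $U_f$, and the third is $<\f{\delta}{2}$ by the $\epsilon_1$ tolerance. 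The sum is $<\delta$; any boundary-case subtlety in passing to the supremum is absorbed by a marginal tightening of the $\epsilon_1$ tolerance.

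For the moreover claim, $m_R(0)=(1,0)\in\mathcal{M}_0^R$ corresponds to the admissible value $Q=0\in[0,1-2\theta]$, so the triangle inequality yields
\[
d(s_\epsilon(z_0),\mathcal{M}_0^R)\le d(s_\epsilon(z_0),x_f(z_0))+d(x_f(z_0),m_R(0))<\f{\delta}{2}+\f{\delta}{4}<\delta,
\]
placing $s_\epsilon(z_0)$ in $B_d(\mathcal{M}_0^R,\delta)$ and handing off cleanly to the slow-outer analysis on $R_2$. No serious analytic obstacle remains here—the heavy lifting was absorbed into \cref{prop: U_Uf_unif}—so the only conceptual subtlety is that $\mathcal{F}_0$ extends to $\zeta\to\infty$ while $\mathcal{S}_\epsilon^1$ is truncated at $z_0$; the choice of $z_0$ in \eqref{eq: z0_second} is engineered precisely so that the far tail of $\mathcal{F}_0$ clusters tightly enough around the corner $m_R(0)$ for the single pivot $s_\epsilon(z_0)$ to serve as a Hausdorff witness.
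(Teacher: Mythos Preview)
Your proof is correct and follows essentially the same approach as the paper: both directions of the Hausdorff distance are controlled by matching $s_\epsilon(z)$ with $x_f(z)$ for $z\le z_0$ and routing through the pivot $s_\epsilon(z_0)$ and corner $m_R(0)$ via the three-term triangle inequality for $z>z_0$, with the same $\f{\delta}{4}+\f{\delta}{4}+\f{\delta}{2}$ budget. Your treatment of the ``moreover'' claim is likewise identical, and you are slightly more careful than the paper in noting that \eqref{eq: z0_second} at the endpoint $z=z_0$ gives only $\le\f{\delta}{4}$ by continuity.
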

\begin{proof}
Clearly,
\begin{equation}\label{eq: Haus_R1_first}
\sup_{z\in R_1}d(s_\epsilon(z),\mathcal{F}_0)\leq \sup_{z\in R_1}d\Bigl(s_\epsilon(z),x_f(z)\Bigr) <\f{\delta}{2}.
\end{equation}
On the other hand, let $x_f(z) \in \mathcal{F}_0$. If $z\leq z_0$, then
$$
d(\mathcal{S}_\epsilon^1,x_f(z))\leq d(s_\epsilon(z),x_f(z)) <\f{\delta}{2}.
$$
If $z>z_0$, then
$
d(\mathcal{S}_\epsilon^1,x_f(z) )\leq d(s_\epsilon(z_0),x_f(z) )$, which is bounded by
$$
d(x_f(z),m_R(0))+ d(m_R(0),x_f(z_0))+d(x_f(z_0),s_\epsilon(z_0) )<\f{\delta}{4}+\f{\delta}{4}+\f{\delta}{2}=\delta
$$
due to \eqref{eq: z0_second}. Consequently, we have $\displaystyle \sup_{z\in \R} d\Bigl(\mathcal{S}_\epsilon^1,x_f(z) \Bigr)< \delta$. Using \eqref{eq: Haus_R1_first}, by definition, we have $d_H(\mathcal{S}_\epsilon^1,\mathcal{F}_0)< \delta$.

For the second claim, note that
$$
d(s_\epsilon(z_0),\mathcal{M}_0^R)\leq d(s_\epsilon(z_0),m_R(0))\leq d(s_\epsilon(z_0),x_f(z_0))+d(x_f(z_0),m_R(0))<\delta.
$$
It follows that $s_\epsilon(z_0)\in B_d(\mathcal{M}_0^R,\delta)$.
\end{proof}
\subsection{Outer and Mixed Region $R_2$}
We first observe that the line $U-\gamma Q=0$ and $B_d(\mathcal{M}_0^R,\delta)$ do not intersect. Hence, as long as $s_\epsilon(z)\in B_d(\mathcal{M}_0^R,\delta)$, the $Q_\epsilon$ component is strictly increasing. On the boundary points of $R_2$, we show that $Q_\epsilon$ can be controlled in the sense of $Q_\epsilon(z_0)\approx 0$ and $Q_\epsilon(a_\epsilon-z_0)\approx 1-2\theta$. In region $R_1$, we showed $Q_\epsilon(z_0)\to 0$. We will prove $Q_\epsilon(a_\epsilon-z_0)\to 1-2\theta$ in the proceeding propositions.

Write $Q_\epsilon(z)$ in the following way:
\begin{align*}
Q_\epsilon(z)&=\left(\int_{-\infty}^0 + \int_0^{z}\right)D_x(x-z,c_\epsilon,\epsilon)\left(\integ{y}{x-a_\epsilon}[x]<K(y)>\right)\,dx \numberthis \label{eq: Q_R2} \\
&=\integ{x}{-\infty}[0]<D_x(x-z,c_\epsilon,\epsilon)\left(\integ{y}{x-a_\epsilon}[x]<K(y)>\right)>\\
&\phantom{=}+\integ{x}{0}[z]<D_x(x-z,c_\epsilon,\epsilon)\left[1-\left(\int_{-\infty}^{x-a_\epsilon}+\int_x^\infty \right)K(y)\,dy\right]> \\
&=\frac{1}{1+\gamma}- D(-z,c_\epsilon,\epsilon) + B_1(z,\epsilon)-B_2(z,\epsilon)-B_3(z,\epsilon),
\end{align*}
where $D(0,c_\epsilon,\epsilon)=\f{1+\gamma}$ from \eqref{eq: D0} and
\begin{align}
B_1(z,\epsilon)&:=\integ{x}{-\infty}[0]<D_x(x-z,c_\epsilon,\epsilon)\left(\integ{y}{x-a_\epsilon}[x]<K(y)>\right)>, \label{eq: B1}\\
B_2(z,\epsilon)&:=\integ{x}{0}[z]<D_x(x-z,c_\epsilon,\epsilon)\left(\integ{y}{-\infty}[x-a_\epsilon]<K(y)>\right)>, \label{eq: B2}\\
B_3(z,\epsilon)&:=\integ{x}{0}[z]<D_x(x-z,c_\epsilon,\epsilon)\left(\integ{y}{x}[\infty]<K(y)>\right)>.\label{eq: B3}
\end{align}
\begin{proposition}\label{prop: B_errors}
The functions $B_1\chi_{R_2}$, $B_2\chi_{R_2}$, $B_3\chi_{R_2}$ converge to zero uniformly over $\R$ as $\epsilon\to 0$.
\end{proposition}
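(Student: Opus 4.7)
My plan is to show that each of the three error functions is bounded uniformly in $z \in R_2$ by $O(\epsilon)$ times an absolutely convergent integral. The common feature that makes this work is that $D_x$ carries an overall prefactor $\frac{\epsilon}{c(\omega_1-\omega_2)}$, and in all three integrals the argument $x-z$ of $D_x$ is non-positive, so both exponentials $e^{\omega_1(x-z)/c_\epsilon}$ and $e^{\omega_2(x-z)/c_\epsilon}$ are bounded by $1$. Combined with $\omega_1-\omega_2 \to 1$ and $c_\epsilon \to c_f$ from \eqref{eq: limits_eig} and \cref{lemma: IFT}, this yields a clean uniform bound $|D_x(x-z,c_\epsilon,\epsilon)| \leq C\epsilon$ on the relevant domain, for all $\epsilon$ sufficiently small.

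With that bound in hand, I would treat the three pieces in turn, using the exponential decay of $K$ from (H3) to control the inner convolution integrals. For $B_1$, the inner integral $\int_{x-a_\epsilon}^x K(y)\,dy$ with $x \leq 0$ is majorized by $\int_{-\infty}^x \alpha e^{-\rho|y|}\,dy \leq \frac{\alpha}{\rho}e^{-\rho|x|}$, so that $|B_1(z,\epsilon)| \leq C\epsilon \int_{-\infty}^0 e^{\rho x}\,dx$ independently of $z$. For $B_2$, restricting $z \in R_2$ forces $x - a_\epsilon \leq z - a_\epsilon \leq -z_0$, so $\int_{-\infty}^{x-a_\epsilon}K(y)\,dy$ is majorized by $\frac{\alpha}{\rho}e^{-\rho(a_\epsilon - x)}$, and the subsequent integration over $x \in [0,z]$ gives a bound of order $\epsilon e^{-\rho z_0}$, uniform in $z$. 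For $B_3$, the tail estimate $\left|\int_x^\infty K(y)\,dy\right| \leq \frac{\alpha}{\rho}e^{-\rho x}$ together with the $C\epsilon$ bound on $D_x$ yields $|B_3| \leq \frac{C\alpha\epsilon}{\rho^2}(1-e^{-\rho z})$, again uniform in $z$.

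I don't anticipate a real obstacle here: this is essentially a bookkeeping proposition that exploits two facts already in the toolbox---the $\epsilon$ prefactor in $D_x$ and the exponential decay of $K$. The only mild subtlety is that $R_2$ slides with $\epsilon$ (its right endpoint $a_\epsilon - z_0$ blows up), so one must confirm that the bounds produced are genuinely uniform in $z \in R_2$ rather than merely pointwise. The argument above does deliver such bounds because the dominating integrals are $\epsilon$-independent in $B_1$, and in $B_2, B_3$ the $z$-dependence is absorbed into factors like $e^{-\rho z_0}$ or $1 - e^{-\rho z}$ that are uniformly controlled on $R_2$. One writes the final conclusion as $\|B_i\chi_{R_2}\|_\infty \leq C_i \epsilon$ for $i=1,2,3$, which immediately gives uniform convergence to zero.
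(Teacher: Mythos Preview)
Your proposal is correct and follows essentially the same approach as the paper: both arguments exploit the crucial $\epsilon$ prefactor in $D_x$ together with the bound $e^{\omega_i(x-z)/c_\epsilon}\leq 1$ on the relevant domain to obtain $|D_x|\leq C\epsilon$, and then control the remaining kernel integrals to get a uniform $O(\epsilon)$ bound. The only cosmetic difference is that you invoke the exponential decay from (H3) explicitly to bound the inner integrals, whereas the paper phrases those bounds abstractly via $\int|K|$; for $B_2$ the paper also performs a change of variable $x-z\mapsto x$ before estimating, but your direct estimate works equally well.
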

\begin{proof}
For $B_1$, when $z\in R_2$, since $x\leq z$, the exponential functions in $D_x$ satisfy $e^{\f{\omega_1(x-z)}{c_\epsilon}}\leq 1$ and $e^{\f{\omega_2(x-z)}{c_\epsilon}}\leq 1$ so $|D_x(x-z,c_\epsilon,\epsilon)|\leq \f{2\epsilon}{c_\epsilon(\omega_1-\omega_2)}$ and 
\begin{equation}\label{eq: B1_zero}
|B_1(z,\epsilon)|\leq \f{2\epsilon}{c_\epsilon(\omega_1-\omega_2)}\integ{x}{-\infty}[0]<\integ{y}{x-a_\epsilon}[x]<|K(y)|>>
\end{equation}
so $B_1\chi_{R_2}$ is uniformly $O(\epsilon)$ over $\R$.

The proof for $B_3\chi_{R_2}$ is similar, except the double integral of $|K(y)|$ in \eqref{eq: B1_zero} is replaced with $\integ{x}{0}[a_\epsilon-z_0]<\integ{y}{x}[\infty]<|K(y)|>>$, which is uniformly bounded.

For $B_2$, we make a change of variable $x-z \mapsto x$ and since $z< a_\epsilon-z_0$ when $z\in R_2$,
\begin{align*}
|B_3(z,\epsilon)|&=\left| \integ{x}{-z}[0]<D_x(x,c_\epsilon,\epsilon)\left(\integ{y}{-\infty}[x+z-a_\epsilon]<K(y)>\right)> \right| \\
&\leq \integ{x}{-(a_\epsilon-z_0)}[0]<|D_x(x,c_\epsilon,\epsilon)|\left(\integ{y}{-\infty}[x-z_0]<|K(y)|>\right)> \\
&\leq \f{2\epsilon}{c_\epsilon(\omega_1-\omega_2)}\integ{x}{-(a_\epsilon-z_0)}[0]<\integ{y}{-\infty}[x-z_0]<|K(y)|>> \longrightarrow 0
\end{align*}
without dependence on $z$. Therefore, $B_2\chi_{R_2} \to 0$ uniformly on $\R$.
\end{proof}

We obtain the necessary estimate on $Q_\epsilon(a_\epsilon-z_0)$ in the following proposition and remark that our application of the implicit function theorem in \cref{sec: calc} is needed here as well.
\begin{proposition}\label{prop: Q_converge_R2}
The limit $Q_\epsilon(a_\epsilon-z_0)\to 1-2\theta$ holds.
\end{proposition}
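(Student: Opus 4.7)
The plan is to use the explicit decomposition of $Q_\epsilon(z)$ obtained just before the proposition, evaluate it at $z = a_\epsilon - z_0$, and exploit the fact that by construction $a_\epsilon = \tau_\epsilon/\epsilon$ with $\tau_\epsilon \to \tau_0$ by \cref{lemma: IFT}. This is precisely why the implicit function theorem work in \cref{sec: calc} is needed here: without knowing $\tau_\epsilon \to \tau_0$, the quantities $\omega_i(\epsilon) a_\epsilon / c_\epsilon$ would be indeterminate.

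First, I would invoke \cref{prop: B_errors} so that $B_1(a_\epsilon - z_0, \epsilon)$, $B_2(a_\epsilon - z_0, \epsilon)$, and $B_3(a_\epsilon - z_0, \epsilon)$ all vanish as $\epsilon \to 0$. That reduces the claim to showing
$$
\tfrac{1}{1+\gamma} - D\bigl(-(a_\epsilon - z_0), c_\epsilon, \epsilon\bigr) \longrightarrow 1 - 2\theta.
$$
Using the explicit formula \eqref{eq: D}, I would split the two exponential contributions. For the $\omega_1$ term, since $\omega_1(\epsilon) \to 1$ and $a_\epsilon = \tau_\epsilon/\epsilon \to \infty$, the factor $\epsilon \, e^{-\omega_1(a_\epsilon - z_0)/c_\epsilon}$ decays super-exponentially and is negligible. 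For the $\omega_2$ term, I would write
$$
\frac{\omega_2(a_\epsilon - z_0)}{c_\epsilon} \;=\; \frac{\omega_2}{\epsilon} \cdot \frac{\tau_\epsilon - \epsilon z_0}{c_\epsilon}
$$
and apply the limits $\omega_2/\epsilon \to \omega_2'(0) = 1 + \gamma$ from \eqref{eq: limits_eig}, together with $\tau_\epsilon \to \tau_0$ and $c_\epsilon \to c_f$, to get a finite exponent $(1+\gamma)\tau_0/c_f$. The prefactor satisfies $\epsilon/\bigl(\omega_2(\omega_1 - \omega_2)\bigr) \to 1/(1+\gamma)$.

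Combining these pieces yields
$$
D\bigl(-(a_\epsilon - z_0), c_\epsilon, \epsilon\bigr) \longrightarrow \frac{1}{1+\gamma}\, e^{-(1+\gamma)\tau_0/c_f}.
$$
The defining relation \eqref{eq: tau0} gives $e^{-(1+\gamma)\tau_0/c_f} = 2\theta(1+\gamma) - \gamma$, so
$$
\tfrac{1}{1+\gamma} - D\bigl(-(a_\epsilon - z_0), c_\epsilon, \epsilon\bigr) \longrightarrow \tfrac{1}{1+\gamma} - \tfrac{2\theta(1+\gamma) - \gamma}{1+\gamma} = 1 - 2\theta,
$$
as required. The only mildly delicate step is keeping uniform control of the $\omega_1$ exponential in a neighborhood of $\epsilon = 0$ so that its contribution is truly negligible; this is immediate because $\omega_1(\epsilon)$ stays bounded below by a positive constant for small $\epsilon$, and the subtracted shift $z_0$ is fixed and independent of $\epsilon$, so the offset $\epsilon z_0 / c_\epsilon$ is $O(\epsilon)$ and does not interfere with the limit of $\omega_2 a_\epsilon/c_\epsilon$.
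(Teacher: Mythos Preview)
Your proof is correct and follows essentially the same route as the paper: invoke \cref{prop: B_errors} to discard $B_1,B_2,B_3$, then analyze $D(-(a_\epsilon-z_0),c_\epsilon,\epsilon)$ by splitting off the negligible $\omega_1$ exponential and computing the $\omega_2$ contribution via $\omega_2/\epsilon\to 1+\gamma$, $\tau_\epsilon\to\tau_0$, and the defining relation \eqref{eq: tau0}. The paper handles the $\omega_1$ term with the cruder bound $\epsilon/(\omega_1(\omega_1-\omega_2))\to 0$ rather than super-exponential decay, but otherwise the arguments are the same.
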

\begin{proof}
Consider the form of $Q_\epsilon$ in \eqref{eq: Q_R2}. Since $B_1 \chi_{R_2}$, $B_2 \chi_{R_2}$, and $B_3 \chi_{R_2}$ vanish uniformly in the limit by \cref{prop: B_errors}, we focus on $\f{1+\gamma}-D(-z,c_\epsilon,\epsilon)$. The first term in $D(-z,c_\epsilon,\epsilon)$ satisfies $\left|-\f{\epsilon}{\omega_1(\omega_1-\omega_2)}e^{-\f{\omega_1 z}{c_\epsilon}} \right|\leq \f{\epsilon}{\omega_1(\omega_1-\omega_2)}\to 0$, leaving only the second term. Thus, after writing $a_\epsilon=\f{\tau_\epsilon}{\epsilon}$, we may write
\begin{align*}
\lim_{\epsilon\to 0}Q_\epsilon(a_\epsilon-z_0)&=\f{1+\gamma}-\lim_{\epsilon\to 0} \left[\f{\epsilon}{\omega_2(\omega_1-\omega_2)}e^{\f{\omega_2 z_0}{c_\epsilon}}e^{-\f{\omega_2\tau_\epsilon}{\epsilon c_\epsilon}}\right].
\end{align*}
Since $z_0$ has no dependence on $\epsilon$, we see that $\omega_2 z_0\to 0$ in the first exponential. Therefore, the above is equivalent to
$$
\f{1+\gamma}-\f{ \omega_2'(0)}e^{-\f{\omega_2'(0)\tau_0}{c_f}}=\f{1+\gamma}\left(1-e^{-\f{(1+\gamma)\tau_0}{c_f}}\right)=\f{1+\gamma}\left(1-(2\theta(1+\gamma)-\gamma)\right)=1-2\theta
$$
by the definition of $\tau_0$.
\end{proof}
By \cref{prop: Q_converge_R2}, we may ensure $\epsilon$ is sufficiently small so that 
\begin{equation}\label{eq: mr_estimates}
d(m_R(Q_\epsilon(z_0)),m_R(0))<\delta, \qquad  d(m_R(Q_\epsilon(a_\epsilon-z_0)),m_R(1-2\theta)) < \delta.
\end{equation}
\begin{proposition}\label{prop: R2_int_bound}
For all $z\in R_2$, the estimate $\left| \left(\int_{-\infty}^{z-a_\epsilon}+\int_z^\infty \right)K(x)\,\mathrm{d}x \right|<\delta$ holds.
\end{proposition}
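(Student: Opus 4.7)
The plan is to observe that this proposition is essentially an immediate consequence of the definition of $R_2$ and the choice of $z_0$ in \eqref{eq: z0_first}. Since $z \in R_2(\epsilon,z_0) = (z_0, a_\epsilon - z_0)$, we have simultaneously $z > z_0$ and $z - a_\epsilon < -z_0$. Therefore, the interval of integration in the first integral, $(-\infty, z-a_\epsilon)$, is contained in $(-\infty, -z_0)$, and the interval of integration in the second integral, $(z, \infty)$, is contained in $(z_0, \infty)$.

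Applying the triangle inequality followed by monotonicity of the integral of $|K|$ over larger domains, I would bound
\begin{equation*}
\left| \left(\int_{-\infty}^{z-a_\epsilon}+\int_z^\infty \right)K(x)\,\mathrm{d}x \right| \leq \int_{-\infty}^{z-a_\epsilon}|K(x)|\,\mathrm{d}x + \int_z^\infty |K(x)|\,\mathrm{d}x \leq \left(\int_{-\infty}^{-z_0} + \int_{z_0}^\infty\right)|K(x)|\,\mathrm{d}x,
\end{equation*}
and then invoke \eqref{eq: z0_first} to conclude the right-hand side is strictly less than $\delta$. This bound is uniform in $z \in R_2$ and in $\epsilon$ (the $\epsilon$-dependence only enters through $a_\epsilon$, which shifts the left endpoint further left, making the bound only tighter).

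There is no real obstacle here — the proposition is a bookkeeping lemma that isolates a uniform tail estimate which will then be used in the subsequent region $R_2$ analysis, presumably when controlling the forcing term $\int_{z-a_\epsilon}^z K(x)\,\mathrm{d}x = 1 - \left(\int_{-\infty}^{z-a_\epsilon} + \int_z^\infty\right)K(x)\,\mathrm{d}x$ so that it is close to $1$ throughout $R_2$, matching the right critical manifold $\widehat{\mathcal{M}}_0^R = \{(1-Q,Q)\}$. The only subtlety to note is that the bound holds for every $\epsilon$ small enough that $R_2$ is nonempty (i.e., $a_\epsilon > 2z_0$), which is guaranteed since $a_\epsilon \to \infty$ as $\epsilon \to 0$.
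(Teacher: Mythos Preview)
Your proof is correct and essentially identical to the paper's: both apply the triangle inequality to pass to $|K|$, then enlarge the domains of integration to $(-\infty,-z_0)\cup(z_0,\infty)$ using $z>z_0$ and $z-a_\epsilon<-z_0$, and finish with \eqref{eq: z0_first}. Your additional commentary on how the estimate feeds into the $R_2$ analysis via the forcing term is accurate and matches the use in \cref{prop: R2_nearline}.
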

\begin{proof}
Clearly 
$$
\left| \left(\int_{-\infty}^{z-a_\epsilon}+\int_z^\infty \right)K(x)\,\mathrm{d}x \right| \leq \left(\int_{-\infty}^{z-a_\epsilon}+\int_z^\infty \right)|K(x)|\, \mathrm{d}x\leq \left(\int_{-\infty}^{-z_0}+\int_{z_0}^\infty \right)|K(x)|\, \mathrm{d}x<\delta
$$
by \eqref{eq: z0_first}.
\end{proof}
The line $U=1-Q$ extended to all $Q\in \R$ has a corresponding $\delta$-ball described by a box with left and right boundaries as parallel lines $\ell_{\pm}^R(Q):=1-Q\pm\sqrt{2}\,\delta$. Clearly, $s_\epsilon(z_0)$ is between $\ell_{-}^R$ and $\ell_{+}^R$ since $s_\epsilon(z_0)\in B_d(\mathcal{M}_0^R,\delta)$. The following proposition provides us with information about the flow of $s_\epsilon(z)$.
\begin{proposition}\label{prop: R2_nearline}
For all $z\in R_2$, the solution satisfies $|U_\epsilon(z)-(1-Q_\epsilon(z))|<\sqrt{2}\,\delta$ or equivalently, $s_\epsilon(z)$ does not touch lines $\ell_{\pm}^R(Q_\epsilon(z))$ as $s_\epsilon(z)$ travels upward.
\end{proposition}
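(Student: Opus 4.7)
The plan is to reduce the statement to a sign argument on a scalar auxiliary function and then run a standard barrier argument. Observe that the two lines $\ell_{\pm}^R$ are exactly the level sets $\{\phi = \pm\sqrt{2}\,\delta\}$ of the function
\begin{equation*}
\phi(z) := U_\epsilon(z) + Q_\epsilon(z) - 1,
\end{equation*}
so the claim $|U_\epsilon(z)-(1-Q_\epsilon(z))|<\sqrt{2}\,\delta$ for all $z\in R_2$ is equivalent to $|\phi(z)|<\sqrt{2}\,\delta$ on $R_2$. From the traveling wave system \eqref{eq: ansatz} I would derive the scalar relation
\begin{equation*}
c_\epsilon\,\phi'(z) = -\phi(z) + \psi(z) + \epsilon\bigl(U_\epsilon(z) - \gamma\, Q_\epsilon(z)\bigr), \qquad \psi(z):=\integ{x}{z-a_\epsilon}[z]<K(x)>\,-\,1,
\end{equation*}
and note by \cref{prop: R2_int_bound} that $|\psi(z)|<\delta$ on $R_2$.

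For the initial condition, \cref{prop: R1_Hausdorff} yields $s_\epsilon(z_0)\in B_d(\mathcal{M}_0^R,\delta)$; since the Euclidean distance from $(U,Q)$ to the line $U+Q=1$ equals $|\phi|/\sqrt{2}$, this immediately gives $|\phi(z_0)|<\sqrt{2}\,\delta$, placing $s_\epsilon(z_0)$ strictly between $\ell_{-}^R$ and $\ell_{+}^R$. I would then argue by contradiction: suppose there exists $z\in R_2$ with $|\phi(z)|=\sqrt{2}\,\delta$, and let $z^*$ be the infimum of such $z$. By continuity, $|\phi(z^*)|=\sqrt{2}\,\delta$ and $|\phi|<\sqrt{2}\,\delta$ on $[z_0,z^*)$.

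In the case $\phi(z^*)=\sqrt{2}\,\delta$, the minimality of $z^*$ forces $\phi'(z^*)\geq 0$, hence
\begin{equation*}
0 \leq c_\epsilon\,\phi'(z^*) = -\sqrt{2}\,\delta + \psi(z^*) + \epsilon\bigl(U_\epsilon(z^*)-\gamma Q_\epsilon(z^*)\bigr) \leq (1-\sqrt{2})\,\delta + O(\epsilon),
\end{equation*}
which is strictly negative once $\epsilon$ is small enough, a contradiction. The case $\phi(z^*)=-\sqrt{2}\,\delta$ is symmetric, yielding $c_\epsilon\phi'(z^*)\geq (\sqrt{2}-1)\delta + O(\epsilon)>0$ against $\phi'(z^*)\leq 0$. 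The only technical care needed is to verify that the $\epsilon$ remainder is genuinely $O(\epsilon)$ uniformly on $R_2$, i.e.\ that $U_\epsilon$ and $Q_\epsilon$ are bounded independently of $z$ and $\epsilon$ while $s_\epsilon$ remains in the strip; this follows from $|\phi|<\sqrt{2}\,\delta$ together with $0\leq Q_\epsilon\leq 1-2\theta+O(\delta)$, which itself uses the monotonicity of $Q_\epsilon$ observed at the start of the subsection and the limit from \cref{prop: Q_converge_R2}.

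The main obstacle is minor but must be stated carefully: the contradiction requires the $\epsilon$ tolerance to be chosen small relative to the margin $(\sqrt{2}-1)\delta$, so the $\epsilon_2$ reduction announced in the outline is used precisely here. Everything else is a routine application of the maximum principle style barrier argument on the strip bounded by $\ell_{\pm}^R$.
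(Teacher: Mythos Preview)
Your proof is correct and is a cleaner reformulation of the paper's argument. Both are barrier arguments, but the paper works geometrically with $U_\epsilon$ and $Q_\epsilon$ separately and splits into cases according to how the trajectory can cross each line: for $\ell_-^R$ it uses that $Q_\epsilon$ increasing forces $U_\epsilon'(z_c)<0$, and for $\ell_+^R$ it further subdivides into $U_\epsilon'(z_c)\geq 0$ versus $dQ_\epsilon/dU_\epsilon\leq -1$, the latter handled by bounding $U_\epsilon'$ below and invoking $Q_\epsilon'=O(\epsilon)$. Your scalar function $\phi=U_\epsilon+Q_\epsilon-1$ and its ODE collapse all of this into a single first-hitting argument, because $\phi'$ is exactly the normal derivative to the lines $\ell_\pm^R$; the sign contradiction then falls out in one line for each barrier. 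What you gain is brevity and transparency about where the $(\sqrt{2}-1)\delta$ margin comes from; what the paper's version makes more visible is the geometric picture of the flow near the slow manifold.

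One small point: your justification that $\epsilon(U_\epsilon-\gamma Q_\epsilon)=O(\epsilon)$ uniformly on $[z_0,z^*]$ via \cref{prop: Q_converge_R2} and the conditional monotonicity is slightly circular, since those statements presuppose control you are in the process of proving. The cleaner route is to note that on $[z_0,z^*]$ you already have $|\phi|\leq\sqrt{2}\,\delta$, hence $U_\epsilon=1-Q_\epsilon+\phi$, so $c_\epsilon Q_\epsilon'=\epsilon\bigl(1-(1+\gamma)Q_\epsilon+\phi\bigr)$; a separate one-line barrier on this equation keeps $Q_\epsilon$ in a fixed bounded interval (independent of $\epsilon$), and then $U_\epsilon$ is bounded as well. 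This closes the loop without appealing to later propositions.
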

\begin{proof}
As $Q_\epsilon$ increases, the only way for $s_\epsilon$ to touch $\ell_{-}^R$ requires $1-Q_\epsilon(z_c)=U_\epsilon(z_c)+\sqrt{2}\,\delta$ and $U_\epsilon'(z_c)<0$, which we show leads to a contradiction. From $U_\epsilon(z)=-c_\epsilon U_\epsilon'(z)-Q_\epsilon(z)+\integ{x}{z-a_\epsilon}[z]<K(x)>$, we find
\begin{align*}
U_\epsilon(z_c)&>-Q_\epsilon(z_c)+\integ{x}{z_c-a_\epsilon}[z_c]<K(x)>=-Q_\epsilon(z_c)+1-\left(\int_{-\infty}^{z_c-a_\epsilon}+\int_{z_c}^\infty \right)K(x)\,\mathrm{d}x \\
&=U_\epsilon(z_c)+\sqrt{2}\,\delta -\left(\int_{-\infty}^{z_c-a_\epsilon}+\int_{z_c}^\infty \right)K(x)\,\mathrm{d}x >U_\epsilon(z_c)+\left(\sqrt{2}-1\right)\delta
\end{align*}
by \cref{prop: R2_int_bound}, which is impossible. The solution can flow into $\ell_{+}^R$ in two ways with $1-Q_\epsilon(z_c)=U_\epsilon(z_c)-\sqrt{2}\, \delta$. If $U_\epsilon'(z_c)\geq 0$, then a reversal of the previous case shows $U_\epsilon(z_c)\leq U_\epsilon(z_c)+\left(1-\sqrt{2}\right)\delta$, which is impossible. The final possibility is $\f{\mathrm{d}Q_\epsilon}{\mathrm{d}U_\epsilon}=\f{Q_\epsilon'(z_c)}{U_\epsilon'(z_c)}\leq -1$ (so  the tangent line and $\ell_+^R$ both have negative slope, but the tangent line is steeper). Since $Q_\epsilon'(z)$ is positive and uniformly $O(\epsilon)$, we rule this out by finding a negative upper bound on $U_\epsilon'(z_c)$. Simply plugging in,
\begin{align*}
c_\epsilon U_\epsilon'(z_c)&=-U_\epsilon(z_c)-Q_\epsilon(z_c)+1-\left(\int_{-\infty}^{z_c-a_\epsilon}+\int_{z_c}^\infty \right)K(x)\,\mathrm{d}x <-\left(\sqrt{2}-1\right)\delta.
\end{align*}
Therefore, $\f{Q_\epsilon'(z_c)}{U_\epsilon'(z_c)}>-\f{c_\epsilon Q_\epsilon'(z_c)}{\left(\sqrt{2}-1\right)\delta}$. With $\delta$ fixed, if necessary, we can reduce $\epsilon_2$ so that $Q_\epsilon'(z)$ is uniformly small enough to lead to a contradiction for all $\epsilon<\epsilon_2$. In conclusion, $s_\epsilon$ stays between $\ell_{-}^R$ and $\ell_{+}^R$ during upward travel.
\end{proof}
We conclude this section with the following proposition, which also implies $U_\epsilon(z)>\theta$ in $R_2$.
\begin{proposition}\label{prop: R2_Haus_proof}
For $\epsilon$ sufficiently small, the estimate $d_H(\mathcal{S}_\epsilon^2,\mathcal{M}_0^R)< 3\delta$ holds.
\end{proposition}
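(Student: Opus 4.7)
The plan is to bound each of the two suprema in the definition of $d_H$ by approximately $2\sqrt{2}\,\delta < 3\delta$, relying on three ingredients already established: (i) the box estimate $|U_\epsilon(z) - (1-Q_\epsilon(z))| < \sqrt{2}\,\delta$ from \cref{prop: R2_nearline}; (ii) the strict monotonicity of $Q_\epsilon$ on $R_2$, which follows from \cref{prop: R2_nearline} since $B_d(\mathcal{M}_0^R,\delta)$ is disjoint from the nullcline $U = \gamma Q$, so $c_\epsilon Q_\epsilon' = \epsilon(U_\epsilon - \gamma Q_\epsilon) > 0$ throughout $R_2$; and (iii) the endpoint estimates $Q_\epsilon(z_0) \to 0$ from the $R_1$ analysis and $Q_\epsilon(a_\epsilon - z_0) \to 1-2\theta$ from \cref{prop: Q_converge_R2}. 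First I would shrink $\epsilon_2$ if needed so that $|Q_\epsilon(z_0)| < \delta$ and $|Q_\epsilon(a_\epsilon - z_0) - (1-2\theta)| < \delta$; by monotonicity this confines $Q_\epsilon(R_2) \subseteq [-\delta,\, 1-2\theta+\delta]$.

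For $\sup_{z\in R_2} d(s_\epsilon(z),\mathcal{M}_0^R)$, fix $z \in R_2$ and set the clamped value $Q^\sharp(z) := \max\{0,\min\{1-2\theta,\, Q_\epsilon(z)\}\}$, so that $m_R(Q^\sharp(z)) \in \mathcal{M}_0^R$. The triangle inequality yields
$$
d\bigl(s_\epsilon(z),\, m_R(Q^\sharp(z))\bigr) \leq \bigl|U_\epsilon(z) - (1-Q_\epsilon(z))\bigr| + \sqrt{2}\,\bigl|Q_\epsilon(z) - Q^\sharp(z)\bigr| < \sqrt{2}\,\delta + \sqrt{2}\,\delta < 3\delta,
$$
the first summand via \cref{prop: R2_nearline} and the second via the range bound on $Q_\epsilon$ together with $d(m_R(Q_1),m_R(Q_2)) = \sqrt{2}\,|Q_1-Q_2|$.

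For $\sup_{m_R(Q)\in \mathcal{M}_0^R} d(\mathcal{S}_\epsilon^2, m_R(Q))$, fix $Q \in [0,1-2\theta]$. By continuity and monotonicity of $Q_\epsilon$, either the intermediate value theorem supplies $z \in R_2$ with $Q_\epsilon(z) = Q$, giving $d(s_\epsilon(z), m_R(Q)) = |U_\epsilon(z) - (1-Q)| < \sqrt{2}\,\delta$, or $Q$ falls in one of the end gaps $[0,\, Q_\epsilon(z_0))$ or $(Q_\epsilon(a_\epsilon - z_0),\, 1-2\theta]$, in which case the appropriate endpoint pivot $s_\epsilon(z_0)$ or $s_\epsilon(a_\epsilon - z_0)$ sits within $2\sqrt{2}\,\delta$ of $m_R(Q)$ by the same two-step triangle argument (using $s_\epsilon(z_0) \in B_d(\mathcal{M}_0^R,\delta)$ from \cref{prop: R1_Hausdorff} and the companion fact at $a_\epsilon - z_0$ delivered by \cref{prop: Q_converge_R2}). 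Combining yields $d_H(\mathcal{S}_\epsilon^2, \mathcal{M}_0^R) < 3\delta$. The parenthetical threshold consequence $U_\epsilon(z) > \theta$ on $R_2$ is then immediate: every point of $\mathcal{M}_0^R$ has $U \geq 2\theta$, so anything within $3\delta$ satisfies $U > \theta$ once $\delta < \theta/3$, a standing constraint we may impose at the outset since we eventually send $\delta \to 0$.

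The primary subtlety, rather than a true obstacle, is conceptual: \cref{prop: R2_nearline} controls the distance from the \emph{unbounded} line $U = 1-Q$, whereas $d_H$ is measured against the \emph{bounded} segment $\mathcal{M}_0^R$. The bridge is the monotonicity-plus-endpoint package, which pins the $Q$-range of $\mathcal{S}_\epsilon^2$ to a $\delta$-neighborhood of $[0, 1-2\theta]$, enabling the clamping argument in one direction of the Hausdorff distance and IVT in the other, with modest care when $Q_\epsilon$ slightly overshoots or undershoots that interval near the boundary of $R_2$.
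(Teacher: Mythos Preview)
Your proof is correct and follows essentially the same approach as the paper: both directions of the Hausdorff distance are controlled via the line estimate from \cref{prop: R2_nearline} combined with endpoint control of $Q_\epsilon$ (\cref{prop: Q_converge_R2} and \eqref{eq: mr_estimates}), using triangle-inequality pivots at $z_0$ and $a_\epsilon - z_0$ for the gap cases. Your clamping device $Q^\sharp$ is a clean repackaging of the paper's case split on whether $Q_\epsilon(z)$ lies inside or outside $[0,1-2\theta]$, yielding the slightly looser constant $2\sqrt{2}\,\delta$ in place of the paper's $(\sqrt{2}+1)\delta$, but both sit comfortably below $3\delta$.
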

\begin{proof}
Let $m_R(Q)\in \mathcal{M}_0^R$. If $Q=Q_\epsilon(z)$ for some $z\in R_2$, then $d(\mathcal{S}_\epsilon^2,m_R(Q_\epsilon(z))) \leq d(s_\epsilon(z),m_R(Q_\epsilon(z)))< \sqrt{2}\,\delta$ by \cref{prop: R2_nearline}. Similarly, if $s_\epsilon(z)\in \mathcal{S}_\epsilon^2$ is given with $m_R(Q_\epsilon(z))\in \mathcal{M}_0^R$, then $d(s_\epsilon(z),\mathcal{M}_0^R)\leq d(s_\epsilon(z),m_R(Q_\epsilon(z)))<\sqrt{2} \,\delta$. If $Q\neq Q_\epsilon(z)$ for some $z\in R_2$, either $Q\in \left(Q_\epsilon(a_\epsilon-z_0),1-2\theta\right]$ or $Q\in \left[0,Q_\epsilon(z_0)\right)$. In the first case, 
\begin{align*}
d(\mathcal{S}_\epsilon^2,m_R(Q))&\leq d(s_\epsilon(a_\epsilon-z_0),m_R(Q))\\
&\leq d(s_\epsilon(a_\epsilon-z_0),m_R(Q_\epsilon(a_\epsilon-z_0)))+d(m_R(Q_\epsilon(a_\epsilon-z_0)),m_R(Q)\Bigr).
\end{align*}
By \eqref{eq: mr_estimates}, we may conclude $d(m_R(Q_\epsilon(a_\epsilon-z_0)),m_R(Q))\leq d(m_R(Q_\epsilon(a_\epsilon-z_0)),m_R(1-2\theta))<\delta$, so combined, $d(\mathcal{S}_\epsilon^2,m_R(Q))<(\sqrt{2}+1)\delta<3\delta$. In the second case, a similar application of \eqref{eq: mr_estimates} shows $d(\mathcal{S}_\epsilon^2,m_R(Q))<3\delta$. In conclusion, we have considered all situations where $m_R(Q)\in\mathcal{M}_0^R$ and find
$$
\sup_{Q \in [0,1-2\theta]} d(\mathcal{S}_\epsilon^2,m_R(Q))<3\delta.
$$
The proof is complete by considering the situations where a given $s_\epsilon(z)$ satisfies $Q_\epsilon(z)>1-2\theta$ or $Q_\epsilon(z)<0$, respectively. In the first case,
\begin{align*}
d(s_\epsilon(z),\mathcal{M}_0^R )&\leq d(s_\epsilon(z),m_R(1-2\theta))\\
&\leq d(s_\epsilon(z),m_R(Q_\epsilon(z)))+d(m_R(Q_\epsilon(z)),m_R(1-2\theta)).
\end{align*}
Obviously, $d(s_\epsilon(z),m_R(Q_\epsilon(z)) )<\sqrt{2}\,\delta$ and $d(m_R(Q_\epsilon(z)),m_R(1-2\theta) )\leq d(m_R(Q_\epsilon(a_\epsilon-z_0)),m_R(1-2\theta))<\delta$
by \eqref{eq: mr_estimates}, so above is bounded by $3\delta$. The $Q_\epsilon(z)<0$ case is similar. In conclusion, we find
$$
\sup_{z \in R_2} d(s_\epsilon(z),\mathcal{M}_0^R )<3\delta.
$$
By, definition, $d_H(\mathcal{S}_\epsilon^2,\mathcal{M}_0^R)< 3\delta$.
\end{proof}
\subsection{Inner Region $R_3$}
Using $\int_{x-a_\epsilon}^x K=\left(\int_{-\infty}^x -\int_{-\infty}^{x-a_\epsilon}\right)K$ and $\int_{-\infty}^x K=1-\int_x^\infty K$, we write $s_\epsilon(z)$ in a form that describes it being uniformly close to a shifted back on $R_3$. For brevity, we will only write $U_\epsilon(z)$ and remark that $Q_\epsilon(z)$ breaks down similarly, but with $C$ replaced by $D$.
\begin{align*}
U_\epsilon(z)&=\integ{x}{-\infty}[z]<C_x(x-z,c_\epsilon,\epsilon)\left(\integ{y}{x-a_\epsilon}[x]<K(y)>\right)>\numberthis \label{eq: U_R3} \\
&=-\integ{x}{-\infty}[z]<C_x(x-z,c_\epsilon,\epsilon)\left(\integ{y}{-\infty}[x-a_\epsilon]<K(y)>\right)>\\
&\phantom{=}+\integ{x}{-\infty}[0]<C_x(x-z,c_\epsilon,\epsilon)\left(\integ{y}{-\infty}[x]<K(y)>\right)>\\
&\phantom{=}+\integ{x}{0}[z]<C_x(x-z,c_\epsilon,\epsilon)\left[1-\integ{y}{x}[\infty]<K(y)>\right]>\\
&=\underbrace{2\theta -U_f(z-a_\epsilon)}_{U_b(z-a_\epsilon)}-\Gamma_1(z,\epsilon)+\Gamma_2(z,\epsilon)+\Gamma_3(z,\epsilon)-\Gamma_4(z,\epsilon),
\end{align*}
where the error functions are defined by
\begin{align}
\Gamma_1(z,\epsilon)&:=\integ{x}{-\infty}[z]<C_x(x-z,c_\epsilon,\epsilon)\left(\integ{y}{-\infty}[x-a_\epsilon]<K(y)>\right)>-U_f(z-a_\epsilon),\label{eq: Gamma1} \\
\Gamma_2(z,\epsilon)&:=\integ{x}{-\infty}[0]<C_x(x-z,c_\epsilon,\epsilon)\left(\integ{y}{-\infty}[x]<K(y)>\right)>,\label{eq: Gamma2}\\
\Gamma_3(z,\epsilon)&:=\left(C(0,c_\epsilon,\epsilon)-C(-z,c_\epsilon,\epsilon)\right)-2\theta,\label{eq: Gamma3}\\
\Gamma_4(z,\epsilon)&:=\integ{x}{0}[z]<C_x(x-z,c_\epsilon,\epsilon)\left(\integ{y}{x}[\infty]<K(y)>\right)>. \label{eq: Gamma4}
\end{align}
\begin{proposition}\label{prop: Gamma_errors}
The functions $\Gamma_1\chi_{R_3}$, $\Gamma_2\chi_{R_3}$, $\Gamma_3\chi_{R_3}$, $\Gamma_4\chi_{R_3}$ converge to zero uniformly over $\R$ as $\epsilon\to 0$.
\end{proposition}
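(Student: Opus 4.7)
The plan is to bound each of the four error terms uniformly over $z \in R_3$ (equivalently, uniformly over $w := z - a_\epsilon \in [-z_0, z_0]$), using three recurring mechanisms: the pointwise limit $C_x(x, c_\epsilon, \epsilon) \to \frac{1}{c_f} e^{x/c_f}$ from \eqref{eq: limit_Cx}; the split of $C_x$ into two exponential pieces whose coefficients satisfy $\frac{1-\omega_2}{c_\epsilon(\omega_1-\omega_2)} = O(1)$ and $\frac{1-\omega_1}{c_\epsilon(\omega_1-\omega_2)} = O(\epsilon)$; and the exponential decay of $K$ from (H3), which delivers tail bounds of the form $\int_z^\infty |K(y)|\,\mathrm{d}y \le \frac{\alpha}{\rho} e^{-\rho z}$ and absolute integrability of $|y|\,|K(y)|$ on either half-line.

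For $\Gamma_1$, I would substitute $\xi = x - a_\epsilon$ to rewrite the leading term as $\int_{-\infty}^w C_x(\xi - w, c_\epsilon, \epsilon) \int_{-\infty}^\xi K(y)\,\mathrm{d}y\,\mathrm{d}\xi$, observe that $U_f(w)$ has the same structure from \eqref{eq: front} but with $\frac{1}{c_f} e^{(\xi-w)/c_f}$ in place of $C_x$, and then change variables once more via $\eta = \xi - w$. The resulting integrand $\bigl[C_x(\eta, c_\epsilon, \epsilon) - \frac{1}{c_f} e^{\eta/c_f}\bigr] \int_{-\infty}^{\eta + w} K(y)\,\mathrm{d}y$ converges pointwise to zero and is dominated, uniformly in $w \in [-z_0, z_0]$, by a constant multiple of $\min\bigl\{\|K\|_{L^1},\, e^{\rho(\eta + z_0)}\bigr\}$, which is integrable on $(-\infty, 0]$. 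Dominated convergence then delivers uniform vanishing.

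For $\Gamma_2$ and the bulk of $\Gamma_3$, the key observation is that $z \ge a_\epsilon - z_0$ forces $e^{\omega_1(x-z)/c_\epsilon} \le e^{-\omega_1(a_\epsilon - z_0)/c_\epsilon}$, which is of order $e^{-\tau_0/(\epsilon c_f)}$ and decays faster than any polynomial in $\epsilon$; the second piece of $C_x$, with coefficient $O(\epsilon)$ and exponential factor bounded by $1$, contributes only $O(\epsilon)$ once paired with $\int_{-\infty}^0 |y| |K(y)|\,\mathrm{d}y < \infty$. For $\Gamma_3$ specifically, writing $z = a_\epsilon + s$ gives $\omega_2 z/c_\epsilon = \omega_2 \tau_\epsilon/(\epsilon c_\epsilon) + \omega_2 s/c_\epsilon \to (1+\gamma)\tau_0/c_f + 0$ uniformly in $s \in [-z_0, z_0]$, so $e^{-\omega_2 z/c_\epsilon} \to 2\theta(1+\gamma) - \gamma$ by \eqref{eq: tau0}; combining with $C(0, c_\epsilon, \epsilon) \to \gamma/(1+\gamma)$ from \eqref{eq: C0} and $(1-\omega_1)/[\omega_2(\omega_1 - \omega_2)] \to 1/(1+\gamma)$ from \eqref{eq: OneMinusOmega1_Omega2} produces exactly $2\theta$, so $\Gamma_3 \to 0$ uniformly.

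The main obstacle is $\Gamma_4$, because the inner tail $\int_x^\infty K(y)\,\mathrm{d}y$ is \emph{not} automatically small near the upper endpoint $x = z$. I would again split $C_x$: the $O(\epsilon)$ piece contributes $O(\epsilon)$ uniformly, since $\int_0^\infty \int_x^\infty |K(y)|\,\mathrm{d}y\,\mathrm{d}x = \int_0^\infty y |K(y)|\,\mathrm{d}y < \infty$ by (H3). For the $O(1)$ piece, substitute $u = z - x$ and invoke (H3) to estimate $\bigl|\int_x^\infty K(y)\,\mathrm{d}y\bigr| \le \frac{\alpha}{\rho} e^{-\rho(z-u)}$, producing a bound of the form $\frac{M}{c_\epsilon} e^{-\rho z} \int_0^z e^{(\rho - \omega_1/c_\epsilon) u}\,\mathrm{d}u$. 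A case split on whether $\rho$ is smaller than, equal to, or larger than $\omega_1/c_\epsilon$ (with the marginal case contributing an extra factor of $z$) shows the result is $O\bigl(e^{-\mu(a_\epsilon - z_0)}\bigr)$ for some $\mu > 0$, which vanishes uniformly as $a_\epsilon \to \infty$. This interplay between the decay rate of $K$ and the decay rate of $C_x$ is precisely why the exponential decay in (H3) is essential here and cannot be weakened to, say, polynomial decay.
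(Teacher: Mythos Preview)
Your proposal is correct and follows essentially the same route as the paper. For $\Gamma_1$ the paper simply observes $\Gamma_1(z,\epsilon)=A_1(z-a_\epsilon,\epsilon)$ and invokes the earlier dominated-convergence argument for $A_1$, which is exactly your substitution $\xi=x-a_\epsilon$, $\eta=\xi-w$; for $\Gamma_2$ and $\Gamma_4$ the paper carries out the same split of $C_x$ into the $(1-\omega_2)e^{\omega_1(\cdot)/c_\epsilon}$ and $(1-\omega_1)e^{\omega_2(\cdot)/c_\epsilon}$ pieces, computing the resulting integrals explicitly against the bound $|K(y)|\le\alpha e^{-\rho|y|}$ rather than stating the estimates qualitatively as you do; and for $\Gamma_3$ the paper uses a two-sided squeeze via $e^{-\omega_i(a_\epsilon+z_0)/c_\epsilon}\le e^{-\omega_i z/c_\epsilon}\le e^{-\omega_i(a_\epsilon-z_0)/c_\epsilon}$, which amounts to the same uniform-in-$s$ limit you describe. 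Your explicit mention of the case split on $\rho$ versus $\omega_1/c_\epsilon$ in $\Gamma_4$ (including the marginal equality case contributing a harmless factor of $z$) is in fact slightly more careful than the paper, which writes $1/|\omega_1-\rho c_\epsilon|$ without comment.
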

\begin{proof}
For $\Gamma_1$, we notice with the change of variable $x-a_\epsilon \mapsto x$ in the $x$ integral, it follows that $\Gamma_1(z,\epsilon)=A_1(z-a_\epsilon,\epsilon)$ from \eqref{eq: A_1}, where we showed $A_1 \to 0$ uniformly on $(-\infty,z_0]$. There is a one-to-one correspondence between $z\in R_3(\epsilon,z_0)$ and $z-a_\epsilon \in [-z_0,z_0]$. Hence, $\Gamma_1\chi_{R_3}\to 0$ uniformly by the proof of \cref{prop: U_Uf_unif}.

For $\Gamma_2$, with $z\in R_3$, we have 
\begin{align*}
|\Gamma_2(z,\epsilon)|&\leq \f{\alpha}{c_\epsilon(\omega_1-\omega_2)}\integ{x}{-\infty}[0]<\left[(1-\omega_2)e^{\f{\omega_1(x-z)}{c_\epsilon}}+(1-\omega_1)e^{\f{\omega_2(x-z)}{c_\epsilon}}\right]\left(\integ{y}{-\infty}[x]<e^{\rho y}>\right)> \numberthis \label{eq: Gamma2_inequal}\\
&=\f{\alpha}{\rho(\omega_1-\omega_2)}\left[\f{1-\omega_2}{\omega_1+\rho c_\epsilon}e^{-\f{\omega_1 z}{c_\epsilon}}+\f{1-\omega_1}{\omega_2+\rho c_\epsilon}e^{-\f{\omega_2 z}{c_\epsilon}}\right] \\
&\leq \f{\alpha}{\rho(\omega_1-\omega_2)}\left[\f{1-\omega_2}{\omega_1+\rho c_\epsilon}e^{-\f{\omega_1 (a_\epsilon-z_0)}{c_\epsilon}}+\f{1-\omega_1}{\omega_2+\rho c_\epsilon}\right] \longrightarrow 0
\end{align*}
For $\Gamma_3$, with $z\in R_3$, we see that $e^{-\f{\omega_i (a_\epsilon+z_0)}{c_\epsilon}}\leq e^{-\f{\omega_i z}{c_\epsilon}}\leq e^{-\f{\omega_i (a_\epsilon-z_0)}{c_\epsilon}}$ for $i=1\, ,2$ since both eigenvalues are positive. It follows from the definition of $C$ and $C(0,c_\epsilon,\epsilon)=\f{\gamma}{1+\gamma}$ that
\begin{equation}
\f{\gamma}{1+\gamma}-2\theta -\zeta(\epsilon,z_0)\leq \Gamma_3(z,\epsilon) \leq \f{\gamma}{1+\gamma}-2\theta -\zeta(\epsilon,-z_0), \label{eq: Gamma3_squeeze}
\end{equation}
where
\begin{align}
\zeta(\epsilon,z_0)&=\f{\omega_1-\omega_2}\left[\f{1-\omega_2}{\omega_1}e^{-\f{\omega_1 (a_\epsilon-z_0)}{c_\epsilon}}-\f{1-\omega_1}{\omega_2}e^{-\f{\omega_2 (a_\epsilon+z_0)}{c_\epsilon}}\right], \label{eq: zeta}
\end{align}
Clearly, the first exponential term decays to zero in $\zeta(\epsilon,\pm z_0)$. For the second exponential term, $\pm z_0\omega_2 \to 0$ so $\zeta(\epsilon,\pm z_0)$ converges to
\begin{align*}
-\f{1+\gamma}e^{-\f{(1+\gamma)\tau_0}{c_f}}=\f{\gamma}{1+\gamma}-2\theta,
\end{align*}
by the definition of $\tau_0$. Then $\Gamma_3 \chi_{R_3}\to 0$ uniformly by \eqref{eq: Gamma3_squeeze}.

Finally, for $\Gamma_4$, with $z\in R_3$, we have 
\begin{align*}
|\Gamma_4(z,\epsilon)|&\leq \f{\alpha}{c_\epsilon(\omega_1-\omega_2)}\integ{x}{0}[z]<\left[(1-\omega_2)e^{\f{\omega_1(x-z)}{c_\epsilon}}+(1-\omega_1)e^{\f{\omega_2(x-z)}{c_\epsilon}}\right]\left(\integ{y}{x}[\infty]<e^{-\rho y}>\right)> \numberthis \label{eq: Gamma4_inequal}\\
&=\f{\alpha}{\rho(\omega_1-\omega_2)}\left[\f{1-\omega_2}{\omega_1-\rho c_\epsilon}\left(e^{-\rho z}-e^{-\f{\omega_1 z}{c_\epsilon}}\right)+\f{1-\omega_1}{\omega_2-\rho c_\epsilon}\left(e^{-\rho z}-e^{-\f{\omega_2 z}{c_\epsilon}}\right)\right] \\
&\leq \f{\alpha}{\rho(\omega_1-\omega_2)}\left[\f{1-\omega_2}{|\omega_1-\rho c_\epsilon|}\left(e^{-\rho (a_\epsilon-z_0)}+e^{-\f{\omega_1 (a_\epsilon-z_0)}{c_\epsilon}}\right)+\f{2(1-\omega_1)}{|\omega_2-\rho c_\epsilon|}\right] \longrightarrow 0
\end{align*}
\end{proof}
\begin{lemma}\label{lemma: unif_aminb_a}
There exists $\epsilon_3\leq \epsilon_2$ such that for all $\epsilon<\epsilon_3$, the uniform estimates on $R_3$ of $U_\epsilon(z)\approx U_b(z-a_\epsilon)$, $U_\epsilon'(z)\approx U_b'(z-a_\epsilon)$, and $Q_\epsilon(z)\approx 1-2\theta$ hold. The estimates are as close as we desire as $\epsilon\to 0$.
\end{lemma}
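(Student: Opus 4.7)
The plan is to handle the three estimates in sequence: the $U_\epsilon$ bound follows immediately from the decomposition and error propositions already in hand; the $Q_\epsilon$ bound requires a parallel decomposition with $D$ replacing $C$; and the $U_\epsilon'$ bound is best extracted from the defining ODE rather than by differentiating the convolution representation.

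First, for $U_\epsilon$, I would simply invoke the decomposition
\[
U_\epsilon(z) = U_b(z-a_\epsilon) - \Gamma_1(z,\epsilon) + \Gamma_2(z,\epsilon) + \Gamma_3(z,\epsilon) - \Gamma_4(z,\epsilon)
\]
given just before \eqref{eq: Gamma1}--\eqref{eq: Gamma4}, and quote \cref{prop: Gamma_errors} to conclude $\sup_{z\in R_3}|U_\epsilon(z)-U_b(z-a_\epsilon)|\to 0$ as $\epsilon\to 0$.

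Second, for $Q_\epsilon$, I would repeat the same algebraic reshuffling that produced the $U_\epsilon$ decomposition, but starting from \eqref{eq: Q_formal} and carrying $D_x$ through instead of $C_x$. This yields
\[
Q_\epsilon(z) = \bigl[D(0,c_\epsilon,\epsilon) - D(-z,c_\epsilon,\epsilon)\bigr] - \widetilde{\Gamma}_1(z,\epsilon) + \widetilde{\Gamma}_2(z,\epsilon) - \widetilde{\Gamma}_4(z,\epsilon),
\]
where $\widetilde{\Gamma}_i$ are the $D$-analogs of $\Gamma_i$; note that the ``shifted back'' piece $U_b(z-a_\epsilon)$ is absent here because the $Q$ component of the singular front is identically zero. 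For the errors I would mimic the bounds on $B_1, B_2, B_3$ in \cref{prop: B_errors}: on $R_3$ the estimate $|D_x(x-z,c_\epsilon,\epsilon)|=O(\epsilon)$ holds uniformly for $x\leq z$, so each $\widetilde{\Gamma}_i$ is uniformly $O(\epsilon)$ after pairing with the integrable kernel bounds from (H3). For the leading term, I write $\omega_2 z/c_\epsilon = \omega_2 a_\epsilon/c_\epsilon + \omega_2(z-a_\epsilon)/c_\epsilon$; since $|z-a_\epsilon|\leq z_0$ and $\omega_2 = O(\epsilon)$, the second summand vanishes uniformly, while $\omega_2 a_\epsilon/c_\epsilon = \omega_2 \tau_\epsilon/(\epsilon c_\epsilon)\to (1+\gamma)\tau_0/c_f$ by \cref{lemma: IFT} and \eqref{eq: limits_eig}. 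Hence $e^{-\omega_2 z/c_\epsilon}\to 2\theta(1+\gamma)-\gamma$ uniformly on $R_3$, while the $\omega_1$ piece of $D(-z,c_\epsilon,\epsilon)$ decays exponentially. A short computation then gives $D(0,c_\epsilon,\epsilon)-D(-z,c_\epsilon,\epsilon)\to 1-2\theta$ uniformly on $R_3$.

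Third, for $U_\epsilon'$ I would bypass differentiating the convolution by subtracting the defining ODEs. From $c_\epsilon U_\epsilon'(z)+U_\epsilon(z)+Q_\epsilon(z) = \integ{x}{z-a_\epsilon}[z]<K(x)>$ together with the back identity $c_f U_b'(z-a_\epsilon)+U_b(z-a_\epsilon)+(1-2\theta) = \integ{x}{z-a_\epsilon}[\infty]<K(x)>$, I obtain
\[
c_\epsilon U_\epsilon'(z) - c_f U_b'(z-a_\epsilon) = \bigl[U_b(z-a_\epsilon)-U_\epsilon(z)\bigr] + \bigl[(1-2\theta)-Q_\epsilon(z)\bigr] - \integ{x}{z}[\infty]<K(x)>.
\]
The first two brackets are uniformly small on $R_3$ by the previous two steps; the tail integral is majorized by $\integ{x}{a_\epsilon-z_0}[\infty]<|K(x)|>$, which vanishes by (H3) since $a_\epsilon\to\infty$. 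Using $c_\epsilon\to c_f>0$ and that $U_b'=-U_f'$ is uniformly bounded, I solve for $U_\epsilon'(z)-U_b'(z-a_\epsilon)$ and conclude the desired uniform smallness. The main obstacle is the $Q_\epsilon$ step: the landing value $1-2\theta$ across the sliding window $R_3$ is not forced by any soft convergence argument but is pinned down by the specific value of $\tau_0$ solved for in \cref{sec: calc}. That is precisely the point where the output of the implicit function theorem feeds back into the threshold analysis, and it is what makes the wave-speed calculation a prerequisite for the closeness estimate.
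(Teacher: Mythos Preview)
Your proposal is correct and follows essentially the same route as the paper: invoke \cref{prop: Gamma_errors} for $U_\epsilon$, redo the decomposition with $D_x$ in place of $C_x$ for $Q_\epsilon$, and subtract the back ODE from the pulse ODE for $U_\epsilon'$. Your handling of the $Q_\epsilon$ error terms via the blanket bound $|D_x|=O(\epsilon)$ is in fact cleaner than the paper's ``tedious adjustment of \cref{prop: Gamma_errors}'', and your explicit identification of $\tau_0$ as the mechanism forcing $D(0)-D(-z)\to 1-2\theta$ on $R_3$ matches the paper's implicit reliance on \cref{lemma: IFT}.
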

\begin{proof}
The uniform limit $(U_\epsilon(z)-U_b(z-a_\epsilon)\chi_{R_3}\to 0$ follows directly from \cref{prop: Gamma_errors}. For $Q_\epsilon$, after writing it in a similar way to that of $U_\epsilon$ in \eqref{eq: U_R3}. A tedious adjustment of \cref{prop: Gamma_errors} results in $(Q_\epsilon(z)-\underbrace{Q_b(z-a_\epsilon)}_{=1-2\theta})\chi_{R_3} \to 0$ uniformly. Finally, the back satisfies
\begin{equation} \label{eq: Uback_diff}
c_f U_b'(z-a_\epsilon)+U_b(z-a_\epsilon)+(1-2\theta)=\integ{x}{z-a_\epsilon}[\infty]<K(x)>=1-\integ{x}{-\infty}[z-a_\epsilon]<K(x)>,
\end{equation}
while $U_\epsilon$ satisfies
\begin{equation}\label{eq: U_R3_diff}
c_\epsilon U_\epsilon'(z)+U_\epsilon(z)+Q_\epsilon(z)=1-\left(\int_z^\infty+\int_{-\infty}^{z-a_\epsilon}\right)K(x)\,\mathrm{d}x.
\end{equation}
Subtracting \eqref{eq: Uback_diff} from \eqref{eq: U_R3_diff} leads to the inequality
\begin{align*}
|c_\epsilon U_\epsilon'(z)-c_f U_b'(z-a_\epsilon)| &\leq |U_\epsilon(z)-U_b(z-a_\epsilon)|+|Q_\epsilon(z)-(1-2\theta)| \\
&\phantom{\leq}+\integ{x}{a_\epsilon-z_0}[\infty]<|K(x)|>\longrightarrow 0
\end{align*}
so $(U_\epsilon'(z)-U_b'(z-a_\epsilon))\chi_{R_3}\to 0$ uniformly.
\end{proof}
In conclusion, for $\epsilon<\epsilon_3$, the curve $(U_\epsilon,Q_\epsilon)$ is uniformly close to the (fast time) back in the singular homoclinic orbit, and because $U_\epsilon'(z)\approx U_b'(z-a_\epsilon)$ and $Q_\epsilon'(z)\approx Q_b'(z-a_\epsilon)=0$ as well (where $U_b'(0)<0$), the solution successfully crosses the threshold from above one time at $z=a_\epsilon$. We conclude this subsection with a statement about the Hausdorff distance which is similar to \cref{prop: R1_Hausdorff}. If necessary, ensure $d(s_\epsilon(z),x_b(z-a_\epsilon))\chi_{R_3}<\f{\delta}{2}$ for all $z\in\R$.
\begin{proposition}\label{prop: R3_Hausdorff}
The estimate $d_H(\mathcal{S}_\epsilon^3,\mathcal{B}_0)< \delta$ holds for all $\epsilon<\epsilon_3$. Moreover, $s_\epsilon(a_\epsilon+z_0)\in B_d(\mathcal{M}_0^L,\delta)$.
\end{proposition}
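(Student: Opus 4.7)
The plan is to adapt the proof of \cref{prop: R1_Hausdorff} to the back rather than the front, with the new wrinkle that $\mathcal{B}_0$ has two asymptotic corner limits (rather than one for $\mathcal{F}_0$), requiring two pivot points on the solution curve: $s_\epsilon(a_\epsilon-z_0)$ near $m_R(1-2\theta)$ and $s_\epsilon(a_\epsilon+z_0)$ near $m_L(1-2\theta)$. All of the real analytic work has already been done in \cref{lemma: unif_aminb_a}, so this reduces to a triangle-inequality accounting.

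First I would dispatch the easy direction of the Hausdorff estimate. By \cref{lemma: unif_aminb_a} (and the subsequent tightening to ensure $d(s_\epsilon(z),x_b(z-a_\epsilon))\chi_{R_3}<\delta/2$),
$$\sup_{z\in R_3} d\bigl(s_\epsilon(z),\mathcal{B}_0\bigr)\leq \sup_{z\in R_3} d\bigl(s_\epsilon(z),x_b(z-a_\epsilon)\bigr) < \f{\delta}{2}.$$

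Next I would handle the reverse supremum by splitting the parameter space of $\mathcal{B}_0$ into three regions according to $w\in\R$ in $x_b(w)$. For $w\in[-z_0,z_0]$, the correspondence $z=w+a_\epsilon\in R_3$ gives $d(\mathcal{S}_\epsilon^3,x_b(w))\leq d(s_\epsilon(z),x_b(w))<\delta/2$ directly. For $w<-z_0$, I would use the pivot $s_\epsilon(a_\epsilon-z_0)$ together with the triangle inequality routed through $m_R(1-2\theta)$:
$$d\bigl(\mathcal{S}_\epsilon^3,x_b(w)\bigr)\leq d\bigl(s_\epsilon(a_\epsilon-z_0),x_b(-z_0)\bigr)+d\bigl(x_b(-z_0),m_R(1-2\theta)\bigr)+d\bigl(m_R(1-2\theta),x_b(w)\bigr),$$
bounded by $\delta/2+\delta/4+\delta/4=\delta$ using \cref{lemma: unif_aminb_a} and the two instances of \eqref{eq: z0_third} (the first inequality there, applied to both $w$ and $-z_0$). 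For $w>z_0$, the argument is symmetric, with pivot $s_\epsilon(a_\epsilon+z_0)$ and corner $m_L(1-2\theta)$. Combining with the easy direction yields $d_H(\mathcal{S}_\epsilon^3,\mathcal{B}_0)<\delta$.

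For the pivot claim $s_\epsilon(a_\epsilon+z_0)\in B_d(\mathcal{M}_0^L,\delta)$, a single triangle inequality suffices:
$$d\bigl(s_\epsilon(a_\epsilon+z_0),\mathcal{M}_0^L\bigr)\leq d\bigl(s_\epsilon(a_\epsilon+z_0),m_L(1-2\theta)\bigr)\leq d\bigl(s_\epsilon(a_\epsilon+z_0),x_b(z_0)\bigr)+d\bigl(x_b(z_0),m_L(1-2\theta)\bigr) < \f{\delta}{2}+\f{\delta}{4}<\delta$$
by \cref{lemma: unif_aminb_a} and \eqref{eq: z0_third}. There is no genuine obstacle at this stage: the constants $\delta/2$ and $\delta/4$ were deliberately chosen in \eqref{eq: z0_second}--\eqref{eq: z0_third} and in the $\epsilon_3$ tightening precisely so that the three contributions fit under $\delta$, exactly paralleling the structure of \cref{prop: R1_Hausdorff}. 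The only care needed is to double the casework on $\mathcal{B}_0$ to cover both asymptotic ends.
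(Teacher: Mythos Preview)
Your proposal is correct and follows essentially the same approach as the paper's own proof: both directions of the Hausdorff estimate are handled identically, with the easy direction coming straight from \cref{lemma: unif_aminb_a} and the reverse direction split by cases on the back parameter, using triangle-inequality routing through the corner points $m_R(1-2\theta)$ and $m_L(1-2\theta)$. The paper writes out only the $w>z_0$ pivot explicitly and dismisses the $w<-z_0$ case as ``similar,'' whereas you spell out both; otherwise the arguments coincide.
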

\begin{proof}
Firstly,
\begin{equation}\label{eq: Haus_R3_first}
\sup_{z\in R_3}d(s_\epsilon(z),\mathcal{B}_0)\leq \sup_{z\in R_3}d(s_\epsilon(z),x_b(z-a_\epsilon)) <\f{\delta}{2}.
\end{equation}
On the other hand, let $x_b(z) \in \mathcal{B}_0$. If $|z-a_\epsilon |\leq z_0$, then
$$
d(\mathcal{S}_\epsilon^3,x_b(z-a_\epsilon))\leq d(s_\epsilon(z),x_b(z-a_\epsilon)) <\f{\delta}{2}.
$$
If $z-a_\epsilon>z_0$, then
$
d(\mathcal{S}_\epsilon^3,x_b(z-a_\epsilon))\leq d(s_\epsilon(a_\epsilon+z_0),x_b(z-a_\epsilon))$, which is bounded by
$$
d(x_b(z-a_\epsilon),m_L(1-2\theta))+ d(m_L(1-2\theta),x_b(z_0))+d(x_b(z_0),s_\epsilon(a_\epsilon +z_0))<\delta
$$
using \eqref{eq: z0_second}. The $z-a_\epsilon<-z_0$ case is similar. It is clear that $d_H(\mathcal{S}_\epsilon^3,\mathcal{B}_0)<\delta$.

For the second claim, we observe that
$$
d(s_\epsilon(a_\epsilon +z_0),m_L(1-2\theta))\leq d(s_\epsilon(a_\epsilon+z_0),x_b(z_0))+d(x_b(z_0),m_L(1-2\theta))<\delta.
$$
It follows that $s_\epsilon(a_\epsilon+z_0)\in B_d(\mathcal{M}_0^L,\delta)$.
\end{proof}
\subsection{Outer and Mixed Region $R_4$}
We consider the slow system when $\epsilon=0$. After the inner region takes over instantaneously as the back travels to $\mathcal{M}_0^L$ at $\tau=\tau_0$, the zeroth order dynamics on $\mathcal{M}_0^L$ can be understood as solutions to the equations $U_L=-Q_L$ and $c_f\dot{Q}_L=-(1+\gamma)Q_L$ for $\tau\geq \tau_0$, subject to $Q_L(\tau_0)=1-2\theta$. The solution is given by 
$$
Q_L(\tau)=(1-2\theta)e^{-\f{(1+\gamma)(\tau-\tau_0)}{c_f}}.
$$
This solution is valid until $(U_L(+\infty),Q_L(+\infty))=(0,0)$. Hence, our strategy on $R_4$ is similar to the one on $R_2$ in that we show that $s_\epsilon(z)$ does not touch the lines $\ell_{\pm}^L=-Q \pm \sqrt{2}\delta$ as the solution travels downward under mixed time, but also, we show that during slow time, $s_\epsilon(z)$ converges to $\mathcal{M}_0^L$ along the tail as $z\to +\infty$.

With this said, let $\tau_1>0$. Then we may split $R_4$ into $R_4=R_4^1\cup R_4^2$, where $R_4^1:=\left(a_\epsilon+z_0,a_\epsilon+\f{\tau_1}{\epsilon}+z_0 \right)$ and  $R_4^2:=\left[a_\epsilon+\f{\tau_1}{\epsilon}+z_0,\infty \right)$ partition $R_4$ into mixed and outer regions. 
\subsubsection*{Mixed Region $R_4^1$}
The following propositions are similar to \cref{prop: R2_nearline,prop: R2_int_bound}.
\begin{proposition}\label{prop: R4_int_bound}
For all $z\in R_4$, the estimate $\left| \integ{x}{z-a_\epsilon}[z]<K(x)> \right|<\delta$ holds.
\end{proposition}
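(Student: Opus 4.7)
The plan is to mimic the argument used for \cref{prop: R2_int_bound}, observing that $R_4$ is placed on the opposite side of the pulse interval, so the roles of the integration endpoints simply swap. Concretely, for $z \in R_4$ we have $z > a_\epsilon + z_0$, hence $z - a_\epsilon > z_0 > 0$, which means the entire interval of integration $[z - a_\epsilon, z]$ lies inside $[z_0, \infty)$.

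With that geometric observation in hand, I would just estimate
\begin{align*}
\left| \integ{x}{z-a_\epsilon}[z]<K(x)> \right|
&\leq \integ{x}{z-a_\epsilon}[z]<|K(x)|> \\
&\leq \integ{x}{z_0}[\infty]<|K(x)|> \\
&\leq \left(\int_{-\infty}^{-z_0}+\int_{z_0}^\infty \right)|K(x)|\, \mathrm{d}x < \delta,
\end{align*}
where the last inequality is exactly the defining property \eqref{eq: z0_first} of $z_0=z_0(\delta)$, together with nonnegativity of $|K|$ which lets us drop the $(-\infty,-z_0]$ piece. Note that no dependence on $\epsilon$ or on the particular $z \in R_4$ appears in the bound, which is what we want.

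There is no real obstacle: the result is essentially a one-line consequence of the defining integrability tail condition on $K$ imposed when choosing $z_0$ at the start of the section. The only point worth flagging is that this proposition (analogous to \cref{prop: R2_int_bound}) is what will later let us repeat the barrier argument of \cref{prop: R2_nearline} on $R_4^1$: namely, it provides the right-hand side remainder that must be controlled when comparing $U_\epsilon(z) = -c_\epsilon U_\epsilon'(z) - Q_\epsilon(z) + \int_{z-a_\epsilon}^z K$ against the target line $U = -Q$, so that $s_\epsilon(z)$ cannot cross $\ell_{\pm}^L$ during downward travel. Hence this proposition should be stated and proved now, before turning to the slow-region analysis on $R_4^2$.
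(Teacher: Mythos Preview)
Your proof is correct and follows exactly the same approach as the paper: observe that $z-a_\epsilon>z_0$ for $z\in R_4$, bound the integral by $\int_{z_0}^\infty |K|$, and invoke \eqref{eq: z0_first}. The additional remarks about its role in the upcoming barrier argument on $R_4^1$ are also accurate.
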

\begin{proof}
For all $z\in R_4$, we have
$$
\left|\integ{x}{z-a_\epsilon}[z]<K(x)>\right|\leq \integ{x}{z-a_\epsilon}[z]<|K(x)|>\leq \integ{x}{z_0}[\infty]<|K(x)|><\delta
$$
by \eqref{eq: z0_first}.
\end{proof}
\begin{proposition}\label{prop: R4_nearline}
For all $z\in R_4^1$, the solution satisfies $|U_\epsilon(z)+Q_\epsilon(z))|<\sqrt{2}\,\delta$ or equivalently, $s_\epsilon(z)$ does not touch lines $\ell_{\pm}^L(Q_\epsilon(z))$ as $s_\epsilon(z)$ travels downward.
\end{proposition}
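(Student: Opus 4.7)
The plan is to repeat the contradiction argument of \cref{prop: R2_nearline}, adapted to the descending slow flow on $\mathcal{M}_0^L$. By \cref{prop: R3_Hausdorff}, $s_\epsilon(a_\epsilon+z_0)\in B_d(\mathcal{M}_0^L,\delta)$, so it lies strictly between the parallel lines $\ell_-^L$ and $\ell_+^L$ that bound the open strip $\{(U,Q):|U+Q|<\sqrt{2}\,\delta\}$. Setting $h(z):=U_\epsilon(z)+Q_\epsilon(z)$, suppose for contradiction that there is a smallest $z_c\in R_4^1$ at which $|h(z_c)|=\sqrt{2}\,\delta$. Continuity of $h$ forces $h'(z_c)\leq 0$ in the $\ell_-^L$ case ($h(z_c)=-\sqrt{2}\,\delta$) and $h'(z_c)\geq 0$ in the $\ell_+^L$ case ($h(z_c)=\sqrt{2}\,\delta$).

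I would then evaluate the traveling wave equation $c_\epsilon U_\epsilon'(z)+U_\epsilon(z)+Q_\epsilon(z)=\integ{x}{z-a_\epsilon}[z]<K(x)>$ at $z=z_c$. By \cref{prop: R4_int_bound}, the convolution term has absolute value less than $\delta$, so the $\ell_-^L$ case yields $c_\epsilon U_\epsilon'(z_c)=\sqrt{2}\,\delta+\integ{x}{z_c-a_\epsilon}[z_c]<K(x)>\geq(\sqrt{2}-1)\,\delta$, and a symmetric computation in the $\ell_+^L$ case gives $c_\epsilon U_\epsilon'(z_c)\leq -(\sqrt{2}-1)\,\delta$.

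To close the argument, I would combine these sign-definite bounds with the uniform estimate $|Q_\epsilon'(z)|=O(\epsilon)$ over $\R$, which is immediate from \eqref{eq: Qp_formal} and \eqref{eq: Dx} (and was already invoked in \cref{prop: U_Uf_unif}). Shrinking $\epsilon_4\leq\epsilon_3$ so that $|Q_\epsilon'(z)|<(\sqrt{2}-1)\,\delta/c_\epsilon$ uniformly on $\R$ for all $\epsilon<\epsilon_4$, the $\ell_-^L$ case gives $h'(z_c)=U_\epsilon'(z_c)+Q_\epsilon'(z_c)>0$, contradicting $h'(z_c)\leq 0$, while the $\ell_+^L$ case gives $h'(z_c)<0$, contradicting $h'(z_c)\geq 0$. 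Hence no first exit point $z_c$ exists and $|U_\epsilon+Q_\epsilon|<\sqrt{2}\,\delta$ throughout $R_4^1$, which is equivalent to $s_\epsilon(z)$ never touching $\ell_{\pm}^L(Q_\epsilon(z))$ during its descent.

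The main subtlety, rather than obstacle, is handling a possibly tangential first contact ($h'(z_c)=0$); the strict lower bound $|U_\epsilon'(z_c)|\geq(\sqrt{2}-1)\,\delta/c_\epsilon$ paired with the strict upper bound on $|Q_\epsilon'(z_c)|$ rules this out as well. Since $\delta$ is fixed before $\epsilon_4$ is selected, all bounds are uniform in $z\in R_4^1$, and no tools beyond those used in \cref{prop: R2_nearline} are required.
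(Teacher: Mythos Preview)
Your proposal is correct and follows essentially the same contradiction strategy as the paper: a first-exit argument using the traveling-wave identity $c_\epsilon U_\epsilon'=-\,(U_\epsilon+Q_\epsilon)+\int_{z-a_\epsilon}^{z}K$, \cref{prop: R4_int_bound}, and the uniform $O(\epsilon)$ bound on $Q_\epsilon'$. The only cosmetic difference is that the paper splits the $\ell_-^L$ case into the subcases $U_\epsilon'(z_c)\leq 0$ versus $\tfrac{Q_\epsilon'}{U_\epsilon'}(z_c)\leq -1$, whereas you streamline this by working directly with $h=U_\epsilon+Q_\epsilon$ and the sign of $h'(z_c)$; the inequality $c_\epsilon U_\epsilon'(z_c)>(\sqrt{2}-1)\delta$ (strict, by \cref{prop: R4_int_bound}) then combines with $|Q_\epsilon'|<(\sqrt{2}-1)\delta/c_\epsilon$ to give the same contradiction in a single step.
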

\begin{proof}
As $Q_\epsilon$ decreases, the only way for $s_\epsilon$ to touch $\ell_{+}^L$ requires $-Q_\epsilon(z_c)=U_\epsilon(z_c)-\sqrt{2}\,\delta$ and $U_\epsilon'(z_c)>0$, which similar to \cref{prop: R2_nearline}, leads to the contradiction
$$
U_\epsilon(z_c)<U_\epsilon(z_c)-\sqrt{2}\,\delta+\left|\integ{x}{z_c-a_\epsilon}[z_c]<K(x)>\right|<U_\epsilon(z_c)-\left(\sqrt{2}-1\right)\delta.
$$
The solution can flow into $\ell_{-}^L$ when $-Q_\epsilon(z_c)=U_\epsilon(z_c)+\sqrt{2}\,\delta$ in two ways. If $U_\epsilon'(z_c)\leq 0$, then a reversal of the previous case shows $U_\epsilon(z_c)\geq U_\epsilon(z_c)+\left(\sqrt{2}-1\right)\delta$, which is impossible. The second possibility is $\f{\mathrm{d}Q_\epsilon}{\mathrm{d}U_\epsilon}=\f{Q_\epsilon'(z_c)}{U_\epsilon'(z_c)}\leq -1$. 
Plugging in,
\begin{align*}
c_\epsilon U_\epsilon'(z_c)&=\sqrt{2}\,\delta + \integ{x}{z-a_\epsilon}[z]<K(x)> >\left(\sqrt{2}-1\right)\delta.
\end{align*}
Since $Q_\epsilon'(z_c)<0$, we find $\f{Q_\epsilon'(z_c)}{U_\epsilon'(z_c)}>-\f{c_\epsilon |Q_\epsilon'(z_c)|}{\left(\sqrt{2}-1\right)\delta}$. With $Q_\epsilon'$ uniformly $O(\epsilon)$, we can guarantee this negative value is small enough to make $\f{Q_\epsilon'(z_c)}{U_\epsilon'(z_c)}\leq -1$ lead to a contradiction. In conclusion, $s_\epsilon$ stays between $\ell_{-}^L$ and $\ell_{+}^L$ during downward travel.
\end{proof}
As a result of the above propositions, we find that $s_\epsilon(z)$ starts in $B_d(\mathcal{M}_0^L,\delta)$ and does not leave during travel in region $R_4^1$, as the flow transitions from fast to slow in this region.
\subsubsection*{Outer Region $R_4^2$}
In preparation of showing $|U_\epsilon(z)+Q_\epsilon(z)|\chi_{R_4^2}\to 0$ uniformly, set $G(x,c_\epsilon,\epsilon)=C(x,c_\epsilon,\epsilon)+D(x,c_\epsilon,\epsilon)$ and $W_\epsilon=U_\epsilon+Q_\epsilon$. We will also use $\tau_\epsilon=\epsilon a_\epsilon \to \tau_0$. Explicitly,
\begin{align*}
G(x,c_\epsilon,\epsilon)&=\f{\omega_1-\omega_2}\left[\left(\f{1-\omega_2-\epsilon}{\omega_1}\right) e^{\f{\omega_1x}{c_\epsilon}}-\left(\f{1-\omega_1-\epsilon}{\omega_2}\right) e^{\f{\omega_2 x}{c_\epsilon}}\right], \numberthis \label{eq: G}\\
G_x(x,c_\epsilon,\epsilon)&=\f{c_\epsilon(\omega_1-\omega_2)}\left[(1-\omega_2-\epsilon) e^{\f{\omega_1x}{c_\epsilon}}-(1-\omega_1-\epsilon) e^{\f{\omega_2 x}{c_\epsilon}}\right].
\end{align*}  

Write
\begin{align*}
W_\epsilon(z)&=\integ{x}{-\infty}[0]<G_x(x-z,c_\epsilon,\epsilon)\left(\integ{y}{x-a_\epsilon}[x]<K(y)>\right)> \numberthis \label{eq: UplusQ_R4}\\
&\phantom{=}+\integ{x}{0}[a_\epsilon]<G_x(x-z,c_\epsilon,\epsilon)\left[1-\left(\int_{-\infty}^{x-a_\epsilon}+\int_x^\infty \right)K(y)\,dy\right]>\\
&\phantom{=}+\integ{x}{a_\epsilon}[z]<G_x(x-z,c_\epsilon,\epsilon)\left(\integ{y}{x-a_\epsilon}[x]<K(y)>\right)>\\
&=L_1(z,\epsilon)-L_2(z,\epsilon)-L_3(z,\epsilon)+L_4(z,\epsilon)+L_5(z,\epsilon),
\end{align*}
where
\begin{align}
L_1(z,\epsilon)&:=\integ{x}{-\infty}[0]<G_x(x-z,c_\epsilon,\epsilon)\left(\integ{y}{x-a_\epsilon}[x]<K(y)>\right)> \label{eq: L1}\\
L_2(z,\epsilon)&:=\integ{x}{0}[a_\epsilon]<G_x(x-z,c_\epsilon,\epsilon)\left(\integ{y}{-\infty}[x-a_\epsilon]<K(y)>\right)>, \label{eq: L2} \\
L_3(z,\epsilon)&:=\integ{x}{0}[a_\epsilon]<G_x(x-z,c(\epsilon),\epsilon)\left(\integ{y}{x}[\infty]<K(y)>\right)>, \label{eq: L3} \\
L_4(z,\epsilon)&:=\integ{x}{a_\epsilon}[z]<G_x(x-z,c_\epsilon,\epsilon)\left(\integ{y}{x-a_\epsilon}[x]<K(y)>\right)>, \label{eq: L4} \\
L_5(z,\epsilon)&=G(a_\epsilon-z,c_\epsilon,\epsilon)-G(-z,c_\epsilon,\epsilon). \label{eq: L5}
\end{align}

\begin{lemma}\label{lemma: UplusQ_R4_errors}
The functions $L_i\chi_{R_4^2}$ converge to zero uniformly over $\R$. Hence, $W_\epsilon(z)\chi_{R_4^2} \to 0$ uniformly. Moreover, $W_\epsilon'(z)\chi_{R_4^2} \to 0$ uniformly so the solution follows the line $U=-Q$ downward as $s_\epsilon(+\infty)=(0,0)$.
\end{lemma}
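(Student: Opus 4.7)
The plan is to decompose each $L_i$ using the eigenmode structure of $G$ and $G_x$, and to exploit the location of $R_4^2$ to show each contribution vanishes uniformly. Using the identities $1-\omega_2-\epsilon=\omega_1(1-\omega_2)$ and $1-\omega_1-\epsilon=\omega_2(1-\omega_1)$, which follow from $\omega_1\omega_2=\epsilon(1+\gamma)$ and $\omega_1+\omega_2=1+\epsilon\gamma$, I first rewrite
$$G_x(x,c_\epsilon,\epsilon)=\frac{1}{c_\epsilon(\omega_1-\omega_2)}\Bigl[\omega_1(1-\omega_2)e^{\omega_1 x/c_\epsilon}-\omega_2(1-\omega_1)e^{\omega_2 x/c_\epsilon}\Bigr].$$
Two facts will drive the analysis. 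First, for $z\in R_4^2$ and $x\leq a_\epsilon$ we have $e^{\omega_1(x-z)/c_\epsilon}\leq e^{-\omega_1\tau_1/(c_\epsilon\epsilon)}=O(e^{-C/\epsilon})$, i.e., exponentially small in $1/\epsilon$. Second, the coefficient of $e^{\omega_2(x-z)/c_\epsilon}$ in $G_x$ is $O(\epsilon^2)$ since $1-\omega_1=\omega_2-\epsilon\gamma=O(\epsilon)$ and $\omega_2=O(\epsilon)$, so the $\omega_2$-mode produces $O(\epsilon)$ contributions even though the exponential $e^{\omega_2(x-z)/c_\epsilon}\leq 1$ is merely bounded. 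The analogous decomposition of $G$ yields an $\omega_1$-term that is $O(e^{-C/\epsilon})$ at arguments $-z$ or $a_\epsilon-z$, and an $\omega_2$-term with prefactor $(1-\omega_1)/(\omega_1-\omega_2)=O(\epsilon)$.

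Armed with this, I bound each $L_i$ separately. For $L_1$ ($x\leq 0$), the double integral of $|K|$ is uniformly bounded by (H3), and the $G_x$ decomposition gives $|L_1|=O(e^{-C/\epsilon})+O(\epsilon)$. For $L_2$ and $L_3$ ($x\in[0,a_\epsilon]$), (H3) ensures the inner integrals $\int_{-\infty}^{x-a_\epsilon}|K|$ and $\int_x^\infty|K|$ decay exponentially in $a_\epsilon-x$ and $x$ respectively; combining with each eigenmode and integrating explicitly produces the same bound. For $L_4$ ($x\in[a_\epsilon,z]$), I substitute $s=x-a_\epsilon$ and bound the inner $K$-integral by a constant multiple of $e^{-\rho s}+e^{-\rho(s+a_\epsilon)}$; the resulting one-dimensional integrals evaluate to $O(e^{-C/\epsilon})+O(e^{-\rho\tau_1/\epsilon})$ for the $\omega_1$-mode and $O(\epsilon)$ for the $\omega_2$-mode. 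Finally, $L_5=G(a_\epsilon-z,c_\epsilon,\epsilon)-G(-z,c_\epsilon,\epsilon)$ is $O(\epsilon)+O(e^{-C/\epsilon})$ directly from the decomposition of $G$. Combining, each $L_i\chi_{R_4^2}\to 0$ uniformly on $\R$, hence $W_\epsilon\chi_{R_4^2}\to 0$ uniformly.

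For the derivative, summing the two traveling-wave ODEs gives
$$c_\epsilon W_\epsilon'(z)=-W_\epsilon(z)+\int_{z-a_\epsilon}^z K(x)\,\mathrm{d}x+\epsilon\bigl(U_\epsilon(z)-\gamma Q_\epsilon(z)\bigr).$$
The first term is uniformly small on $R_4^2$ by what has just been shown. The second is bounded by $\int_{z-a_\epsilon}^\infty|K|+\int_z^\infty|K|\leq(2\alpha/\rho)e^{-\rho(\tau_1/\epsilon+z_0)}$ via (H3), which decays uniformly. The third is $O(\epsilon)$ using the uniform boundedness of $(U_\epsilon,Q_\epsilon)$, which follows from their integral representations together with the eigenvalue limits and (H3), along the lines of the bounds used for $A_1,A_2,B_i,\Gamma_i$ above. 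Since $c_\epsilon\to c_f>0$, this yields $W_\epsilon'\chi_{R_4^2}\to 0$ uniformly, completing the proof. The main obstacle is the careful accounting of the $\omega_2$-mode contributions: the exponential $e^{-\omega_2 z/c_\epsilon}$ does not decay to zero on $R_4^2$, so smallness must be extracted from the algebraic prefactor $1-\omega_1=O(\epsilon)$ that emerges from the simplification $1-\omega_1-\epsilon=\omega_2(1-\omega_1)$. Without this cancellation, the bound on $W_\epsilon$ would fail and the pulse would not return to $\mathcal{M}_0^L$.
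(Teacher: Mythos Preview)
Your proof is correct and follows essentially the same approach as the paper: bound each $L_i$ by separating the $\omega_1$- and $\omega_2$-modes, using the location of $R_4^2$ to make the $\omega_1$-exponential $e^{-\omega_1(z-a_\epsilon)/c_\epsilon}$ exponentially small while the $\omega_2$-mode vanishes by the algebraic smallness of its prefactor; then obtain $W_\epsilon'\chi_{R_4^2}\to 0$ by summing the two traveling-wave ODEs. Your identities $1-\omega_2-\epsilon=\omega_1(1-\omega_2)$ and $1-\omega_1-\epsilon=\omega_2(1-\omega_1)$ make the $O(\epsilon^2)$ smallness of the $\omega_2$-coefficient more transparent than the paper's direct use of $|1-\omega_1-\epsilon|\to 0$, but the underlying mechanism is identical.
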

\begin{proof}
For $L_1$, $L_2$, and $L_3$, using $z-a_\epsilon\geq \f{\tau_1}{\epsilon}+z_0$ and $e^{\f{\omega_1 x}{c_\epsilon}}\leq e^{\f{\omega_1 \tau_\epsilon}{\epsilon c_\epsilon}}$ when $x\leq a_\epsilon=\f{\tau_\epsilon}{\epsilon}$, we note that $|G_x(x-z,c_\epsilon,\epsilon)|\leq \beta(\epsilon)$, where 
$$
\beta(\epsilon):=\f{c_\epsilon(\omega_1-\omega_2)}\left[(1-\omega_2-\epsilon)e^{-\f{\omega_1 (\tau_1+\epsilon z_0)}{\epsilon c_\epsilon}}+\left|1-\omega_1-\epsilon \right|\right]\longrightarrow 0.
$$ 
Hence,
\begin{align*}
|L_1(z,\epsilon)|&\leq \beta(\epsilon)\integ{x}{-\infty}[0]<\integ{y}{x-a_\epsilon}[x]<|K(y)|>> \longrightarrow 0, \\
|L_2(z,\epsilon)|&\leq \beta(\epsilon)\integ{x}{0}[a_\epsilon]<\integ{y}{-\infty}[x-a_\epsilon]<|K(y)|>>\longrightarrow 0,\\
|L_3(z,\epsilon)|&\leq \beta(\epsilon)\integ{x}{0}[a_\epsilon]<\integ{y}{x}[\infty]<|K(y)|>>\longrightarrow 0.
\end{align*}
For $L_4 \chi_{R_4^2}$, we have
\begin{align*}
|L_4(z,\epsilon)|&\leq \f{\alpha}{c_\epsilon(\omega_1-\omega_2)}\integ{x}{a_\epsilon}[z]<\left[(1-\omega_2-\epsilon) e^{\f{\omega_1 (x-z)}{c_\epsilon}}+|1-\omega_1-\epsilon| e^{\f{\omega_2 (x-z)}{c_\epsilon}}\right]\left(\integ{y}{x-a_\epsilon}[x]<e^{-\rho y}>\right)>\\
&=\f{\alpha (e^{\rho a_\epsilon}-1)}{\rho c_\epsilon(\omega_1-\omega_2)}\integ{x}{a_\epsilon}[z]<(1-\omega_2-\epsilon) e^{-\f{\omega_1 z}{c_\epsilon}}e^{\f{(\omega_1-\rho c_\epsilon) x}{c_\epsilon}}+|1-\omega_1-\epsilon| e^{-\f{\omega_2 z}{c_\epsilon}}e^{\f{(\omega_2-\rho c_\epsilon) x}{c_\epsilon}}>\\
&=\f{\alpha (1-e^{-\rho a_\epsilon})}{\rho (\omega_1-\omega_2)}\left\{\f{1-\omega_2-\epsilon}{\omega_1-\rho c_\epsilon} \left(e^{-\rho (z-a_\epsilon)}-e^{-\f{\omega_1 (z-a_\epsilon)}{c_\epsilon}}\right)\right. \tag{i} \label{R4_L4_i}\\
&\phantom{\f{\alpha (1-e^{-\rho a_\epsilon})}{\rho (\omega_1-\omega_2)}\Bigl\lbrace}\left.+\f{|1-\omega_1-\epsilon|}{\omega_2-\rho c_\epsilon}\left(e^{-\rho (z-a_\epsilon)}-e^{-\f{\omega_2 (z-a_\epsilon)}{c_\epsilon}}\right)\right\} \tag{ii} \label{R4_L4_ii}.
\end{align*}
In \eqref{R4_L4_ii}, we may bound the term by $2 \left|\f{1-\omega_1-\epsilon}{\omega_2-\rho c_\epsilon}\right| \to 0$. In \eqref{R4_L4_i}, we see that $z-a_\epsilon\geq \f{\tau_1}{\epsilon}+z_0$ so the term is bounded by a constant multiple of $\left(e^{-\f{\rho (\tau_1+\epsilon z_0)}{\epsilon}}+e^{-\f{\omega_1 (\tau_1+\epsilon z_0)}{\epsilon c_\epsilon}} \right)\to 0$.

Finally, for $L_5$, since $e^{-\f{\omega_i z}{c_\epsilon}}<e^{-\f{\omega_i (z-a_\epsilon)}{c_\epsilon}}$, we have
\begin{align*}
|L_5(z,\epsilon)|&< \f{2}{\omega_1-\omega_2}\left[\left(\f{1-\omega_2-\epsilon}{\omega_1}\right) e^{-\f{\omega_1 (z-a_\epsilon)}{c_\epsilon}}+\left|\f{1-\omega_1-\epsilon}{\omega_2}\right| e^{-\f{\omega_2 (z-a_\epsilon)}{c_\epsilon}}\right]\\
&\leq \f{2}{\omega_1-\omega_2}\left[\left(\f{1-\omega_2-\epsilon}{\omega_1}\right) e^{-\f{\omega_1 (\tau_1+\epsilon z_0)}{\epsilon c_\epsilon}}+\left|\f{1-\omega_1-\epsilon}{\omega_2}\right|\right] \longrightarrow 0.
\end{align*}
In conclusion, $W_\epsilon(z)\chi_{R_4^2}\to 0$ uniformly. For the derivative, by adding the $U_\epsilon$ and $Q_\epsilon$ equations, we find
\begin{align*}
c_\epsilon |W_\epsilon'(z)|\leq |W_\epsilon(z)|+\epsilon |U_\epsilon(z)-\gamma Q_\epsilon(z)|+\integ{x}{\f{\tau_1}{\epsilon}}[\infty]<|K(x)|> \longrightarrow 0.
\end{align*}
This concludes the proof.
\end{proof}
After $\epsilon_4\leq \epsilon_3$ is chosen so that $d(s_\epsilon(z),\mathcal{M}_0^L)<\delta$ for all $z\in R_4^2$, we have enough information to complete the proof of \cref{thm: E}.
\begin{proposition}\label{prop: R4_Hausdorff}
The limit $d_H(\mathcal{S}_\epsilon,\mathcal{S}_0)\to 0$ holds.
\end{proposition}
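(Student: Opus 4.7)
The plan is to combine the four regional Hausdorff estimates already assembled in the preceding subsections via \cref{prop: dH_regions}. For $R_1$, $R_2$, $R_3$ the bounds $\delta$, $3\delta$, $\delta$ are already in hand from \cref{prop: R1_Hausdorff,prop: R2_Haus_proof,prop: R3_Hausdorff}. What remains is to produce the missing estimate $d_H(\mathcal{S}_\epsilon^4, \mathcal{M}_0^L) < 3\delta$, after which letting $\delta \to 0$ yields the claim.

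For the missing $R_4$ estimate I would mirror the $R_2$ argument essentially verbatim in structure. The direction $\sup_{z \in R_4} d(s_\epsilon(z), \mathcal{M}_0^L) < 3\delta$ rests on two facts already proved: on $R_4^1$, \cref{prop: R4_nearline} confines $s_\epsilon$ between the parallel lines $\ell_\pm^L$ and hence within horizontal distance $\sqrt{2}\,\delta$ of the line $U=-Q$; on $R_4^2$, \cref{lemma: UplusQ_R4_errors} shows $U_\epsilon(z) + Q_\epsilon(z) \to 0$ uniformly, giving the same line-closeness with an even smaller error. When $Q_\epsilon(z) \in [0, 1-2\theta]$ the nearest point on that line already lies in $\mathcal{M}_0^L$; when $Q_\epsilon(z)$ slips slightly above $1-2\theta$ or below $0$, I would absorb the overshoot using the pivot estimate $s_\epsilon(a_\epsilon + z_0) \in B_d(\mathcal{M}_0^L, \delta)$ inherited from \cref{prop: R3_Hausdorff} together with $Q_\epsilon(+\infty)=0$, in exactly the manner \eqref{eq: mr_estimates} was deployed for $m_R$ endpoints.

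The reverse direction $\sup_{m_L(Q) \in \mathcal{M}_0^L} d(\mathcal{S}_\epsilon^4, m_L(Q)) < 3\delta$ requires that every $Q \in [0, 1-2\theta]$ be tracked by $Q_\epsilon(z)$ for some $z \in R_4$. This I would extract from the intermediate value theorem: $Q_\epsilon$ is continuous on $R_4$, $Q_\epsilon(a_\epsilon + z_0) \approx 1-2\theta$ by the $R_3$ analysis, and $Q_\epsilon(+\infty) = 0$, so the range of $Q_\epsilon$ on $R_4$ covers a set within $\delta$ of $[0, 1-2\theta]$. The line-closeness established above then contributes at most $\sqrt{2}\,\delta$ more, and any residual endpoint slack is absorbed by the same pivot trick.

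The main obstacle I anticipate is precisely this bookkeeping at the endpoints $Q = 0$ and $Q = 1-2\theta$, where $\mathcal{M}_0^L$ is clipped but $Q_\epsilon$ only approximates the cutoff values; it is identical in character to the care required at the corresponding corners of $\mathcal{M}_0^R$ in \cref{prop: R2_Haus_proof}, and the same pivot reasoning dispatches it. Once $d_H(\mathcal{S}_\epsilon^4, \mathcal{M}_0^L) < 3\delta$ is established, \cref{prop: dH_regions} immediately yields $d_H(\mathcal{S}_\epsilon, \mathcal{S}_0) < 3\delta$ for every $\epsilon < \epsilon_4$; since $\delta > 0$ was arbitrary, $d_H(\mathcal{S}_\epsilon, \mathcal{S}_0) \to 0$ as $\epsilon \to 0$, completing the proof of \cref{thm: E}.
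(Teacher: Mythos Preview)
Your proposal is correct and follows essentially the same approach as the paper: first establish $d_H(\mathcal{S}_\epsilon^4,\mathcal{M}_0^L)<3\delta$ by mirroring the $R_2$ argument (\cref{prop: R2_Haus_proof}) using \cref{prop: R4_nearline}, \cref{lemma: UplusQ_R4_errors}, and the pivot $s_\epsilon(a_\epsilon+z_0)\in B_d(\mathcal{M}_0^L,\delta)$, then invoke \cref{prop: dH_regions} on the four regional bounds. The paper's own proof is terser---it merely cites the analogy with \cref{prop: R2_Haus_proof} rather than spelling out the two Hausdorff directions and the endpoint bookkeeping---but your more explicit sketch is faithful to what that analogy entails.
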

\begin{proof}
Since $s_\epsilon(a_\epsilon+z_0)\in B_d(\mathcal{M}_0^L,\delta)$, we may combine \cref{prop: R4_nearline,lemma: UplusQ_R4_errors} to show $d(\mathcal{S}_\epsilon^4,\mathcal{M}_0^L)<3\delta$ by a similar proof seen in \cref{prop: R2_Haus_proof} on $R_2$. It follows that for $\epsilon<\epsilon_4$, we have
$$
\max \Bigl\{d(\mathcal{S}_\epsilon^1,\mathcal{F}_0),d(\mathcal{S}_\epsilon^2,\mathcal{M}_0^R),d(\mathcal{S}_\epsilon^3,\mathcal{B}_0),d(\mathcal{S}_\epsilon^4,\mathcal{M}_0^L) \Bigr\}<3\delta.
$$
Hence, by \cref{prop: dH_regions}, it follows that $d_H(\mathcal{S}_\epsilon,\mathcal{S}_0)<3\delta$.
\end{proof}
In conclusion, we proved in this section that $U_\epsilon$ crosses the threshold only at $z=0$ and $z=a_\epsilon$. Therefore, the formal homoclinic orbit connecting $(u,q)=(0,0)$ to itself is a true solution to the original system with Heaviside nonlinearity structure. Moreover, the homoclinic orbit is as close to the singular homoclinic orbit as we desire. This completes the proof of \cref{thm: E}.
\section*{Discussion}
In the present study, we successfully completed a rigorous proof of the existence and uniqueness (modulo translation) of fast traveling pulse solutions to the standard nonlocal, singularly perturbed neural field system with Heaviside activation functions and linear feedback. In \cref{sec: calc}, we changed the time scale in the two speed index functions due to the $O\left(\f{\epsilon}\right)$ second threshold crossing point. This allowed us to proceed with first-order approximations of the differences between pulse and front speed, not seen in previous literature. We also showed that, provided they exist, traveling fronts with single threshold crossing points have speed index functions with unique positive roots. Therefore, the corresponding Evans function cannot have positive zeros and $\lambda=0$ is always simple.

In \cref{sec: thres}, we carefully broke down the solution behavior near the four pieces of the singular homoclinic orbit and proved that the solution converges to it in the sense of Hausdorff distance between sets. In general, this is a difficult problem, especially for nonlocal problems, due to differences in time scale and a lack of standard phase space. However, because the solutions have closed form, the proofs were possible with direct analysis of the formal solutions during travel in inner, outer, and mixed regions. Our result is robust since the exponentially decaying kernel can take on any shape, assuming synaptic couplings are homogeneous and overall excitatory (i.e. $\int_\R K=1>0$).

There are obvious new directions worthy of follow-up. Consider the same problem, but with sigmoidal activation. Closed form is not available so we cannot track a formal solution near the singular homoclinic orbit. However, the solution and wave speed are qualitatively similar to those in the Heaviside case. Therefore, we may be able to track a nonlinear operator $\mathcal{N}[u,q,c]$ for inputs near Heaviside solutions at various thresholds and find a fixed point, which is typically the main strategy for pulses \cite{Faye2015} and bumps \cite{AmariKishimoto} without closed form.

We also did not study stability rigorously. As a result of Sandstede \cite{Sandstede-EvansFunctions} proving an equivalence between exponential and spectral stability, Zhang \cite{Zhang-OnStability} deriving the Evans function (for the front and pulse) to classify the spectrum, and the Evans function analysis from Coombes and Owen \cite{CoombesOwen_Evans} and Pinto et al. \cite{Pinto2005}, the task of rigorously locating the real $O(\epsilon)$ eigenvalue should be manageable. In turn, the pulses are stable.

Finally, there are other avenues to pursue. How do our findings connect to stochastic neural fields? Can we study more complicated kernel structures such as those representing heterogenous neural fields, where propagation failure can occur \cite{kilpatrick2008traveling}? How do (nonconstant) external currents influence our findings? Can we apply similar methods modeling synaptic depression?  We hope that future studies can thoroughly answer some of these questions.
\appendixpage
\appendix
\section{The Continuous Extensions of $Df$ and $Dg$} \label{append_A}
We first calculate the following second partial derivatives:
\begin{align*}
C_{xx}(x,c,\epsilon)&=\f{c^2(\omega_1-\omega_2)}\left[\omega_1(1-\omega_2)e^{\f{\omega_1 x}{c}}-\omega_2(1-\omega_1)e^{\f{\omega_2 x}{c}}\right],\numberthis \label{eq: Cxx} \\
C_{xc}(x,c,\epsilon)&=-\f{c}C_x(x,c,\epsilon)-\f{x}{c^3(\omega_1-\omega_2)}\left[\omega_1(1-\omega_2)e^{\f{\omega_1 x}{c}}-\omega_2(1-\omega_1)e^{\f{\omega_2 x}{c}}\right], \numberthis \label{eq: Cxc} \\
&=-\f{c^2(\omega_1-\omega_2)}\left[\left(1+\f{x\omega_1}{c}\right)(1-\omega_2)e^{\f{\omega_1 x}{c}}-\left(1+\f{x\omega_2}{c}\right)(1-\omega_1)e^{\f{\omega_2 x}{c}}\right], \\
C_{x\epsilon}(x,c,\epsilon)&= -\f{\omega_1'-\omega_2'}{\omega_1-\omega_2}C_x(x,c,\epsilon)\numberthis \label{eq: Cxepsilon}\\
&+\f{c(\omega_1-\omega_2)}\left[\left(\f{\omega_1' x}{c}(1-\omega_2)-\omega_2'\right)e^{\f{\omega_1 x}{c}}-\left(\f{\omega_2' x}{c}(1-\omega_1)-\omega_1'\right)e^{\f{\omega_2 x}{c}}\right]\\
&=\f{c(\omega_1-\omega_2)}\left\lbrace \left[\left(\f{\omega_1' x}{c}-\f{\omega_1'-\omega_2'}{\omega_1-\omega_2}\right)(1-\omega_2)-\omega_2'\right]e^{\f{\omega_1 x}{c}} \right.\\
&\phantom{=\f{c(\omega_1-\omega_2)}}-\left. \left[\left(\f{\omega_2' x}{c}-\f{\omega_1'-\omega_2'}{\omega_1-\omega_2}\right)(1-\omega_1)-\omega_1'\right]e^{\f{\omega_2 x}{c}}\right\rbrace .
\end{align*}
In preparation of applying the dominated convergence theorem, we prove the following limits.
\begin{proposition} \label{prop: second_partials_lims}
For fixed $x\in \R$, $\tau>0$, $c>0$, the following limits hold:
\begin{align}
&\lim_{\epsilon\to 0}\f{\epsilon}C_{xx}\left(x-\f{\tau}{\epsilon},c,\epsilon\right)=0, \label{eq: Cxx_overeps_lim} \\
&\lim_{\epsilon\to 0}\f{\tau}{\epsilon^2}C_{xx}\left(x-\f{\tau}{\epsilon},c,\epsilon\right) = -\f{(1+\gamma)\tau}{c^2}e^{-\f{(1+\gamma)\tau}{c}},\label{eq: Cxx_overepsq_lim}\\
&\lim_{\epsilon\to 0}C_{xc}(x,c,\epsilon)=-\f{c^2}\left(1+\f{x}{c}\right)e^{\f{x}{c}}, \label{eq: Cxc_lim} \\
&\lim_{\epsilon\to 0}C_{xc}\left(x-\f{\tau}{\epsilon},c,\epsilon\right)=0, \label{eq: Cxc_taueps_lim} \\
&\lim_{\epsilon\to 0}C_{x\epsilon}(x,c,\epsilon) =\f{c}\left[\left(1-\f{x}{c}\right)e^{\f{x}{c}}-1\right],\label{Cxeps_lim}\\
&\lim_{\epsilon\to 0}C_{x\epsilon}\left(x-\f{\tau}{\epsilon},c,\epsilon\right)=\f{c}\left(\f{(1+\gamma)\tau}{c}-1\right)e^{-\f{(1+\gamma)\tau}{c}}.\label{eq: Cxeps_taueps_lim}
\end{align}
\end{proposition}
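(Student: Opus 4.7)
The plan is to substitute directly into the closed forms \eqref{eq: Cxx}--\eqref{eq: Cxepsilon} and exploit the Taylor expansions of $\omega_1,\omega_2$ at $\epsilon=0$ recorded in \eqref{eq: limits_eig}--\eqref{eq: omega_secder}. The facts I will repeatedly use are $\omega_1(\epsilon) = 1 - \epsilon + O(\epsilon^2)$ and $\omega_2(\epsilon) = (1+\gamma)\epsilon + O(\epsilon^2)$, so $(1-\omega_1)/\epsilon \to 1$, $\omega_2/\epsilon \to 1+\gamma$, and $\omega_1-\omega_2 \to 1$. Moreover, when the spatial argument is $x - \tau/\epsilon$, the exponential $e^{\omega_1 (x-\tau/\epsilon)/c}$ decays faster than any polynomial in $1/\epsilon$ (because $\omega_1 \to 1 > 0$), while $\omega_2(x-\tau/\epsilon)/c \to -(1+\gamma)\tau/c$ and hence $e^{\omega_2(x-\tau/\epsilon)/c} \to e^{-(1+\gamma)\tau/c}$.

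For the fixed-$x$ limits \eqref{eq: Cxc_lim} and \eqref{Cxeps_lim}, I just pass to the limit termwise in the explicit formulas. The only subtle point is in \eqref{Cxeps_lim}, where the coefficient of $e^{\omega_2 x/c}$ contains $(1-\omega_1)\to 0$ multiplied against bounded quantities, while the additive $-\omega_1' \to 1$ survives and produces the $-1/c$ inside the square bracket of the stated answer. For \eqref{eq: Cxx_overeps_lim} and \eqref{eq: Cxx_overepsq_lim}, the $\omega_1$-term in $C_{xx}(x-\tau/\epsilon,c,\epsilon)$ is killed super-exponentially regardless of the polynomial multipliers $1/\epsilon$ or $\tau/\epsilon^2$, while the $\omega_2$-term carries the prefactor $\omega_2(1-\omega_1) \sim (1+\gamma)\epsilon^2$. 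Thus $C_{xx}(x-\tau/\epsilon,c,\epsilon) \sim -\frac{(1+\gamma)\epsilon^2}{c^2} e^{-(1+\gamma)\tau/c}$; division by $\epsilon$ gives an $O(\epsilon)$ quantity, proving \eqref{eq: Cxx_overeps_lim}, and multiplication by $\tau/\epsilon^2$ gives exactly the stated limit in \eqref{eq: Cxx_overepsq_lim}.

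For \eqref{eq: Cxc_taueps_lim} and \eqref{eq: Cxeps_taueps_lim}, the $\omega_1$-pieces again vanish super-exponentially and can be discarded at the outset. In the $\omega_2$-pieces, the key is to balance the size-$(1/\epsilon)$ factor $(x-\tau/\epsilon)$ against the size-$\epsilon$ factor $(1-\omega_1)$. For \eqref{eq: Cxc_taueps_lim}, the entire $\omega_2$-piece is $O(\epsilon)$ and hence vanishes. For \eqref{eq: Cxeps_taueps_lim}, the leading contribution comes from the combination $-\omega_2'(x-\tau/\epsilon)/c \cdot (1-\omega_1)$, whose $-\tau/\epsilon$ part combines with $(1-\omega_1)/\epsilon \to 1$ and $\omega_2' \to 1+\gamma$ to produce $+(1+\gamma)\tau/c$; the piece involving $(\omega_1'-\omega_2')/(\omega_1-\omega_2)$ vanishes because its $(1-\omega_1)$ factor kills it; and the loose $+\omega_1' \to -1$ contributes $-1$. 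Assembled with the outer prefactor $1/(c(\omega_1-\omega_2)) \to 1/c$ and multiplied by $e^{-(1+\gamma)\tau/c}$, these give $\frac{1}{c}((1+\gamma)\tau/c - 1) e^{-(1+\gamma)\tau/c}$, matching the stated formula.

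The only real obstacle is the bookkeeping in \eqref{eq: Cxeps_taueps_lim}, where three distinct terms inside the bracket contribute at comparable orders and the cancellations among $\omega_1'$, $\omega_2'$, and the ratio $(\omega_1'-\omega_2')/(\omega_1-\omega_2)$ must be tracked carefully; everything else reduces to direct substitution into the expansions above.
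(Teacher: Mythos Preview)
Your proposal is correct and follows essentially the same route as the paper: direct substitution into the formulas \eqref{eq: Cxx}--\eqref{eq: Cxepsilon}, using the expansions $\omega_1=1-\epsilon+O(\epsilon^2)$, $\omega_2=(1+\gamma)\epsilon+O(\epsilon^2)$, the super-exponential decay of $e^{\omega_1(x-\tau/\epsilon)/c}$ against any polynomial blow-up in $1/\epsilon$, and the finite limit $e^{\omega_2(x-\tau/\epsilon)/c}\to e^{-(1+\gamma)\tau/c}$. Your narrative sign-tracking in \eqref{eq: Cxeps_taueps_lim} is slightly informal (the minus you attach to $\omega_2'(x-\tau/\epsilon)/c\cdot(1-\omega_1)$ really comes from the overall $-[\cdots]$ in front of the $\omega_2$-bracket in \eqref{eq: Cxepsilon}), but the arithmetic lands on the right answer and matches the paper's computation.
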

\begin{proof}
For \eqref{eq: Cxx_overeps_lim}, since $\omega_2$ and $1-\omega_1$ are $O(\epsilon)$, it follows that
$$
\lim_{\epsilon\to 0}\f{\epsilon}C_{xx}\left(x-\f{\tau}{\epsilon},c,\epsilon\right)=\lim_{\epsilon\to 0}O\left(\f{\epsilon}\right)\left[O\left(e^{-\f{\omega_1 \tau}{c\epsilon}}\right)-O(\epsilon^2)\right]=0.
$$
For \eqref{eq: Cxx_overepsq_lim}, it is clear that $\f{\tau}{\epsilon^2}C_{xx}\left(x-\f{\tau}{\epsilon},c,\epsilon\right)=O\left(\f{\epsilon^2}\right)\left[O\left(e^{-\f{\omega_1 \tau}{c\epsilon}}\right)-O(\epsilon^2)\right]$. The first term vanishes, leaving
\begin{align*}
\lim_{\epsilon\to 0}\f{\tau}{\epsilon^2}C_{xx}\left(x-\f{\tau}{\epsilon},c,\epsilon\right) &=\lim_{\epsilon\to 0}\left[-\f{\tau \omega_2 (1-\omega_1)}{c^2\epsilon^2(\omega_1-\omega_2)}e^{\f{\omega_2 x}{c}}e^{-\f{\omega_2 \tau}{c\epsilon}}\right]=\f{\tau}{c^2}\omega_2'(0)\omega_1'(0)e^{-\f{\omega_2'(0)\tau}{c}}\\
&=-\f{(1+\gamma)\tau}{c^2}e^{-\f{(1+\gamma)\tau}{c}}.
\end{align*}
For \eqref{eq: Cxc_lim}, the second term of $C_{xc}(x,c,\epsilon)$ vanishes since $1-\omega_1\to 0$, leaving
\begin{align*}
\lim_{\epsilon\to 0} C_{xc}(x,c,\epsilon)&=\lim_{\epsilon\to 0}\left[-\f{c^2(\omega_1-\omega_2)}\left(1+\f{x\omega_1}{c}\right)(1-\omega_2)e^{\f{\omega_1 x}{c}}\right] \\
&=-\f{c^2}\left(1+\f{x}{c}\right)e^{\f{x}{c}}.
\end{align*}
For \eqref{eq: Cxc_taueps_lim}, the result follows from the proof of \eqref{eq: Cxc_lim}, with the observation that $\f{\tau \omega_1}{\epsilon}e^{-\f{\omega_1 \tau}{c\epsilon}}\to 0$.

For \eqref{Cxeps_lim}, the result follows from the definition of $C_{x\epsilon}(x,c,\epsilon)$ from \eqref{eq: Cxepsilon} and simply plugging in $\omega_1(0)$, $\omega_1'(0)$, $\omega_2(0)$, $\omega_2'(0)$, and simplifying.

Finally, for \eqref{eq: Cxeps_taueps_lim}, recalling the definition of $C_{x\epsilon}\left(x-\f{\tau}{\epsilon},c,\epsilon\right)$ from \eqref{eq: Cxepsilon}, the first term vanishes since $\f{\omega_1' \tau}{c\epsilon}e^{-\f{\omega_1 \tau}{c\epsilon}}\to 0$, leaving
\begin{align*}
\lim_{\epsilon\to 0}C_{x\epsilon}\left(x-\f{\tau}{\epsilon},c,\epsilon\right)&=-\lim_{\epsilon\to 0}\f{c(\omega_1-\omega_2)}\left[\left(\f{\omega_2' \left(x-\f{\tau}{\epsilon}\right)}{c}-\f{\omega_1'-\omega_2'}{\omega_1-\omega_2}\right)(1-\omega_1)-\omega_1'\right]e^{\f{\omega_2 \left(x-\f{\tau}{\epsilon}\right)}{c}}\\
&=\f{c}\left(\f{(1+\gamma)\tau}{c}-1\right)e^{-\f{(1+\gamma)\tau}{c}}.
\end{align*}
\end{proof}
\subsection{The Continuous Extension of $Df$}
Due to the partial derivatives of $C_x$ being bounded and $\integ{y}{-\infty}[x]<|K(y)|>$ being integrable over $(-\infty,0]$, we may differentiate and take limits inside the integral. In particular, standard calculations show
\begin{align*}
f_\tau(\tau,c,\epsilon)&=\f{\epsilon}\integ{x}{-\infty}[0]<C_x(x,c,\epsilon)K\left(x-\f{\tau}{\epsilon}\right)>,\numberthis \label{eq: ftau} \\
f_c(\tau,c,\epsilon)&=\integ{x}{-\infty}[0]<C_{xc}(x,c,\epsilon)\left(\integ{y}{x-\f{\tau}{\epsilon}}[x]<K(y)>\right)>,\numberthis \label{eq: fc}\\
f_\epsilon(\tau,c,\epsilon)&=\integ{x}{-\infty}[0]<C_{x\epsilon}(x,c,\epsilon)\left(\integ{y}{x-\f{\tau}{\epsilon}}[x]<K(y)>\right)> \numberthis \label{eq: feps} \\
&-\f{\tau}{\epsilon^2}\integ{x}{-\infty}[0]<C_x(x,c,\epsilon)K\left(x-\f{\tau}{\epsilon}\right)>.
\end{align*}
\begin{proof}[Proof of \cref{lemma: Fpartial_cont} (i)]
For $f_\tau$, we note that $\left|K\left(x-\f{\tau}{\epsilon}\right)\right|\leq \alpha e^{-\f{\rho\tau}{\epsilon}}e^{\rho x}$. It follows that $f_\tau(\tau,c,\epsilon)=O\left(\f{\epsilon}e^{-\f{\rho\tau}{\epsilon}}\right) \to 0$.

For $f_c$, we apply dominated convergence theorem and see that
\begin{align*}
\lim_{\epsilon\to 0} f_c(\tau,c,\epsilon)&=\integ{x}{-\infty}[0]<\lim_{\epsilon\to 0}\left[C_{xc}(x,c,\epsilon)\left(\integ{y}{x-\f{\tau}{\epsilon}}[x]<K(y)>\right)\right]> \\
&=\underbrace{-\f{c^2}\integ{x}{-\infty}[0]<\left(1+\f{x}{c}\right)e^{\f{x}{c}}\left(\integ{y}{-\infty}[x]<K(y)>\right)>}_{=\phi_f'(c)}
\end{align*}
by \eqref{eq: phi_def}, \eqref{eq: phip_def}, and \eqref{eq: Cxc_lim}. Finally, the second term of $f_\epsilon$ is $O\left(\f{\epsilon^2}e^{-\f{\rho\tau}{\epsilon}}\right)\to 0$. Therefore, by dominated convergence theorem again,
\begin{align*}
\lim_{\epsilon\to 0} f_\epsilon(\tau,c,\epsilon)&=\integ{x}{-\infty}[0]<\lim_{\epsilon\to 0} \left[C_{x\epsilon}(x,c,\epsilon)\left(\integ{y}{x-\f{\tau}{\epsilon}}[x]<K(y)>\right)\right]> \numberthis \label{eq: feps_lim} \\
&=\f{c}\integ{x}{-\infty}[0]<\left[\left(1-\f{x}{c}\right)e^{\f{x}{c}}-1\right]\left(\integ{y}{-\infty}[x]<K(y)>\right)> \\
&=\phi_f(c)+\left(c\phi_f'(c)+\phi_f(c)\right)-\f{c}\integ{x}{-\infty}[0]<\integ{y}{-\infty}[x]<K(y)>> \\
&=2\phi_f(c)-\f{c}\integ{x}{-\infty}[0]<|x|\left(e^{\f{x}{c}}+1\right)K(x)>
\end{align*}
after using the second form of $\phi_f'(c)$ from \eqref{eq: phip_def} and applying integration by parts to the double integral involving $K$.
\end{proof}
\subsection{The Continuous Extension of $Dg$}
Recalling $g=\f{\gamma}{1+\gamma}-\overline{C}-\phi_f+E_1-E_2-E_3$, we calculate the extension of each component. Then the proof of \cref{lemma: Fpartial_cont} (ii) is trivial.
\subsubsection*{The Continuous Extension of $D\overline{C}$} 
The partial derivatives of $C$ and $\overline{C}$ are as follows:
\begin{align*}
C_\epsilon(x,c,\epsilon)&=-\f{\omega_1'-\omega_2'}{\omega_1-\omega_2}C(x,c,\epsilon)+\f{e^{\f{\omega_1 x}{c}}}{\omega_1-\omega_2}\left[\left(\f{1-\omega_2}{\omega_1}\right)'+\f{\omega_1'x}{c}\left(\f{1-\omega_2}{\omega_1}\right)\right]\numberthis \label{C_eps}\\
&\phantom{=}-\f{e^{\f{\omega_2 x}{c}}}{\omega_1-\omega_2}\left[\left(\f{1-\omega_1}{\omega_2}\right)'+\f{\omega_2'x}{c}\left(\f{1-\omega_1}{\omega_2}\right)\right], \\
\overline{C}_\tau(\tau,c,\epsilon) &=-\f{\epsilon}C_x\left(-\f{\tau}{\epsilon},c,\epsilon\right)= -\f{c\epsilon(\omega_1-\omega_2)}\left[(1-\omega_2)e^{-\f{\omega_1\tau}{c\epsilon}}-(1-\omega_1)e^{-\f{\omega_2\tau}{c\epsilon}}\right],\numberthis \label{eq: C_tau} \\
\overline{C}_c(\tau,c,\epsilon)&=C_c\left(-\f{\tau}{\epsilon},c,\epsilon\right)=\f{\tau}{c^2\epsilon(\omega_1-\omega_2)}\left[(1-\omega_2)e^{-\f{\omega_1\tau}{c\epsilon}}-(1-\omega_1)e^{-\f{\omega_2\tau}{c\epsilon}}\right], \numberthis \label{eq: C_c} \\
\overline{C}_\epsilon(\tau,c,\epsilon)&=\f{\tau}{\epsilon^2}C_x\left(-\f{\tau}{\epsilon},c,\epsilon\right)+ C_\epsilon \left(-\f{\tau}{\epsilon},c,\epsilon\right) \numberthis \label{eq: Cbar_eps} \\
&=\f{\tau}{c\epsilon^2(\omega_1-\omega_2)}\left[(1-\omega_2)e^{-\f{\omega_1\tau}{c\epsilon}}-(1-\omega_1)e^{-\f{\omega_2\tau}{c\epsilon}}\right]-\f{\omega_1'-\omega_2'}{\omega_1-\omega_2}C\left(-\f{\tau}{\epsilon},c,\epsilon\right)\\
&\phantom{=}+\f{e^{-\f{\omega_1 \tau}{c\epsilon}}}{\omega_1-\omega_2}\left[\left(\f{1-\omega_2}{\omega_1}\right)'-\f{\omega_1'\tau}{c\epsilon}\left(\f{1-\omega_2}{\omega_1}\right)\right] \\
&\phantom{=}-\f{e^{-\f{\omega_2 \tau}{c\epsilon}}}{\omega_1-\omega_2}\left[\left(\f{1-\omega_1}{\omega_2}\right)'-\f{\omega_2'\tau}{c\epsilon}\left(\f{1-\omega_1}{\omega_2}\right)\right].
\end{align*}

\begin{proposition}\label{prop:  C_extend}
For $\epsilon=0$, define
\begin{align}
\overline{C}_\tau(\tau,c,0)&:=\f{c}e^{-\f{(1+\gamma)\tau}{c}}, \label{eq: C_tau_ext}\\
\overline{C}_c(\tau,c,0)&:=-\f{\tau}{c^2}e^{-\f{(1+\gamma)\tau}{c}}, \label{eq: C_c_ext}\\
\overline{C}_\epsilon(\tau,c,0)&:=\left(\f{\tau}{c}-2\right)e^{-\f{(1+\gamma)\tau}{c}}. \label{eq: C_eps_ext}
\end{align}
Then $\overline{C}_\tau$, $\overline{C}_c$, and $\overline{C}_\epsilon$ are continuous at $\epsilon=0$.
\end{proposition}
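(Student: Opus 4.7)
My plan is to handle the three extensions in parallel, starting from the explicit formulas \eqref{eq: C_tau}, \eqref{eq: C_c}, \eqref{eq: Cbar_eps}. In each case, I split the expression into a part multiplied by $e^{-\omega_1\tau/(c\epsilon)}$ and a part multiplied by $e^{-\omega_2\tau/(c\epsilon)}$. Since $\omega_1(\epsilon)\to 1$, the first exponential decays super-polynomially fast and annihilates any algebraic $\epsilon^{-k}$ prefactor; the corresponding terms therefore vanish in the limit. Since $\omega_2(\epsilon)/\epsilon\to\omega_2'(0)=1+\gamma$, the second exponential converges to $e^{-(1+\gamma)\tau/c}$, and the whole problem reduces to computing the limits of the algebraic coefficients multiplying $e^{-\omega_2\tau/(c\epsilon)}$.

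For $\overline{C}_\tau$ and $\overline{C}_c$, the reduction is immediate. From $\omega_1+\omega_2=1+\epsilon\gamma$ I get $1-\omega_1=\omega_2-\epsilon\gamma$, so $(1-\omega_1)/\epsilon\to(1+\gamma)-\gamma=1$, and combined with $\omega_1-\omega_2\to 1$ the prefactor $-\frac{1-\omega_1}{c\epsilon(\omega_1-\omega_2)}$ tends to $-1/c$. This yields $\overline{C}_\tau\to\frac{1}{c}e^{-(1+\gamma)\tau/c}$ after the overall sign, and $\overline{C}_c$ follows from the identity $\overline{C}_c=-\frac{\tau}{c}\,\overline{C}_\tau$ read off from \eqref{eq: C_tau}--\eqref{eq: C_c}.

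The main obstacle is $\overline{C}_\epsilon$, since the three groups of terms in \eqref{eq: Cbar_eps} individually carry coefficients that are singular of order $\epsilon^{-2}$ or $\epsilon^{-1}$ in front of $e^{-\omega_2\tau/(c\epsilon)}$, and these must cancel to leave a finite limit. My plan is to collect all $e^{-\omega_2\tau/(c\epsilon)}$-coefficients and combine the two most singular pieces,
$$
-\frac{\tau(1-\omega_1)}{c\epsilon^2(\omega_1-\omega_2)} \quad\text{and}\quad \frac{\omega_2'\,\tau(1-\omega_1)}{c\epsilon\,\omega_2(\omega_1-\omega_2)},
$$
into the single expression $\frac{\tau(1-\omega_1)}{c\epsilon\,\omega_2(\omega_1-\omega_2)}\bigl(\omega_2'-\omega_2/\epsilon\bigr)$. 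The delicate step is to show the apparent $\epsilon^{-1}$ divergence cancels: using the Taylor expansion $\omega_2(\epsilon)=(1+\gamma)\epsilon+(1+\gamma)\epsilon^2+O(\epsilon^3)$ with $\omega_2''(0)=2(1+\gamma)$ from \eqref{eq: omega_secder}, I obtain $\omega_2'-\omega_2/\epsilon=(1+\gamma)\epsilon+O(\epsilon^2)$, so the combination converges to $\tau/c$.

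The remaining finite contributions are then easy to harvest. The term $-\frac{1}{\omega_1-\omega_2}\bigl(\frac{1-\omega_1}{\omega_2}\bigr)'$ yields $-\gamma/(1+\gamma)$, using the expansion
$$
\frac{1-\omega_1}{\omega_2}=\frac{1}{1+\gamma}+\frac{\gamma}{1+\gamma}\,\epsilon+O(\epsilon^2),
$$
again derivable from \eqref{eq: limits_eig} and \eqref{eq: omega_secder}. The remaining piece $-\frac{\omega_1'-\omega_2'}{\omega_1-\omega_2}\,C(-\tau/\epsilon,c,\epsilon)$, whose $e^{-\omega_2\tau/(c\epsilon)}$-component carries coefficient $\frac{\omega_1'-\omega_2'}{(\omega_1-\omega_2)^2}\cdot\frac{1-\omega_1}{\omega_2}$, tends to $-(2+\gamma)/(1+\gamma)$ since $\omega_1'(0)-\omega_2'(0)=-2-\gamma$. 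Summing these three contributions,
$$
\frac{\tau}{c}-\frac{\gamma}{1+\gamma}-\frac{2+\gamma}{1+\gamma}=\frac{\tau}{c}-\frac{2(1+\gamma)}{1+\gamma}=\frac{\tau}{c}-2,
$$
and multiplying by the limiting exponential gives the stated formula $\bigl(\tau/c-2\bigr)e^{-(1+\gamma)\tau/c}$, completing the proof.
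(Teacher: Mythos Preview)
Your proof is correct and follows essentially the same approach as the paper: both discard the $e^{-\omega_1\tau/(c\epsilon)}$ contributions as exponentially small, reduce to the algebraic coefficients of $e^{-\omega_2\tau/(c\epsilon)}$, and for $\overline{C}_\epsilon$ combine the two singular pieces into $\frac{(1-\omega_1)\tau}{c\epsilon(\omega_1-\omega_2)}\bigl(\frac{\omega_2'}{\omega_2}-\frac{1}{\epsilon}\bigr)$ before taking limits. The only cosmetic differences are that you invoke the shortcut $\overline{C}_c=-\frac{\tau}{c}\overline{C}_\tau$ and use Taylor expansions of $\omega_2$ (via $\omega_2''(0)=2(1+\gamma)$) where the paper instead applies L'Hospital twice to $\frac{\epsilon\omega_2'-\omega_2}{\epsilon\omega_2}$ and uses the closed form \eqref{eq: OneMinusOmega1_Omega2} for $\frac{1-\omega_1}{\omega_2}$.
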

\begin{proof}
We first observe that $e^{-\f{\omega_1\tau}{c\epsilon}}$ decays to zero exponentially fast, while the terms multiplied by this function are algebraic. Hence, we may assume that all terms that are a constant multiple (dependent on $\epsilon$) of $e^{-\f{\omega_1\tau}{c\epsilon}}$ decay to zero.

For the second exponential function, we find that $e^{-\f{\omega_2\tau}{c\epsilon}}\to e^{-\f{\omega_2'(0)\tau}{c}}=e^{-\f{(1+\gamma)\tau}{c}}$. Therefore, the constant multiples of this function (involving $\omega_1$ and $\omega_2$ and their derivatives) need to be handled carefully in the limit.

For $\overline{C}_\tau$,
\begin{align*}
\lim_{\epsilon\to 0}\overline{C}_\tau(\tau,c,\epsilon)&=\lim_{\epsilon\to 0}\f{1-\omega_1}{c\epsilon(\omega_1-\omega_2)}e^{-\f{\omega_2\tau}{c\epsilon}}=\f{c}e^{-\f{(1+\gamma)\tau}{c}}. \numberthis \label{eq: Cbar_tau_lim}
\end{align*}
For $\overline{C}_c$,
\begin{align*}
\lim_{\epsilon\to 0}\overline{C}_c(\tau,c,\epsilon)&=-\lim_{\epsilon\to 0}\f{(1-\omega_1)\tau}{c^2\epsilon(\omega_1-\omega_2)}e^{-\f{\omega_2\tau}{c\epsilon}}=-\f{\tau}{c^2}e^{-\f{(1+\gamma)\tau}{c}}.\numberthis \label{eq: Cbar_c_lim}
\end{align*}
For $\overline{C}_\epsilon$, we group all constants that are multiplied by $e^{-\f{\omega_2\tau}{c\epsilon}}$ and find
\begin{align*}
\lim_{\epsilon\to 0}\overline{C}_\epsilon(\tau,c,\epsilon)&=\left[\lim_{\epsilon\to 0} J(\tau,c,\epsilon)\right]e^{-\f{(1+\gamma)\tau}{c}},
\end{align*}
where
\begin{align}
J(\tau,c,\epsilon)&= \f{\omega_1-\omega_2}\left\lbrace-\f{(1-\omega_1)\tau}{c\epsilon^2}\right. \label{eq: J_term1}\\
&\phantom{=}+\f{(\omega_1'-\omega_2')(1-\omega_1)}{\omega_2(\omega_1-\omega_2)}\phantom{\rbrace}\label{eq: J_term2} \\
&\phantom{=}\left. -\left[\left(\f{1-\omega_1}{\omega_2}\right)'-\f{\omega_2'\tau}{c\epsilon}\left(\f{1-\omega_1}{\omega_2}\right)\right]\right\rbrace \label{eq: J_term3}
\end{align}
Since $\f{\omega_1-\omega_2}\to 1$, we focus on the terms inside the braces. For \eqref{eq: J_term2}, there are no difficulties deducing from \eqref{eq: limits_eig} that
$$
\f{(\omega_1'-\omega_2')(1-\omega_1)}{\omega_2(\omega_1-\omega_2)}\longrightarrow -\f{2+\gamma}{1+\gamma}=-\left(1+\f{1+\gamma}\right).
$$
Also, using the form of $\f{1-\omega_1}{\omega_2}$ from \eqref{eq: OneMinusOmega1_Omega2}, it follows that
$
\left(\f{1-\omega_1}{\omega_2}\right)'=-\f{\omega_1'\gamma}{1+\gamma}\longrightarrow \f{\gamma}{1+\gamma}.
$

Finally, combining the term in \eqref{eq: J_term1} with the second term in \eqref{eq: J_term3}, we have
\begin{equation}
\left(\f{(1-\omega_1)\tau}{c\epsilon}\right)\left(\f{\omega_2'}{\omega_2}-\f{\epsilon}\right).
\end{equation}
The first term converges to $-\f{\omega_1'(0)\tau}{c}=\f{\tau}{c}$. Writing $\f{\omega_2'}{\omega_2}-\f{\epsilon}=\f{\epsilon\omega_2'-\omega_2}{\epsilon\omega_2}$, after two applications of L'Hospital's rule,
\begin{align*}
\lim_{\epsilon\to 0}\f{\epsilon\omega_2'-\omega_2}{\epsilon\omega_2}=\lim_{\epsilon\to 0}\f{\epsilon\omega_2''}{\epsilon\omega_2'+\omega_2}=\lim_{\epsilon\to 0}\f{\omega_2''+\epsilon\omega_2'''}{\epsilon\omega_2'' + 2\omega_2'}=1.
\end{align*}
Therefore, $J(\tau,c,\epsilon)\to \left\lbrace -\left(1+\f{1+\gamma}\right)-\f{\gamma}{1+\gamma}+\f{\tau}{c} \right\rbrace=\f{\tau}{c}-2$ so
\begin{align*}
\lim_{\epsilon\to 0}\overline{C}_\epsilon(\tau,c,\epsilon)&=\left[\lim_{\epsilon\to 0} J(\tau,c,\epsilon)\right]e^{-\f{(1+\gamma)\tau}{c}}=\left(\f{\tau}{c}-2\right)e^{-\f{(1+\gamma)\tau}{c}}.
\end{align*}
\end{proof}
\subsubsection*{The Continuous Extension of $DE_1$}
\begin{align*}
\f{\partial E_1}{\partial \tau}(\tau,c,\epsilon)&=\integ{x}{-\infty}[0]<\f{\partial}{\partial \tau}\left[C_x\left(x-\f{\tau}{\epsilon},c,\epsilon\right)\left(\integ{y}{x-\f{\tau}{\epsilon}}[x]<K(y)>\right)\right]> \numberthis \label{eq: E1_tau} \\
&=\integ{x}{-\infty}[0]<-\f{\epsilon} C_{xx}\left(x-\f{\tau}{\epsilon},c,\epsilon\right)\left(\integ{y}{x-\f{\tau}{\epsilon}}[x]<K(y)>\right)>\\
&\phantom{=}+\integ{x}{-\infty}[0]<C_x\left(x-\f{\tau}{\epsilon},c,\epsilon\right)\f{\epsilon}K\left(x-\f{\tau}{\epsilon}\right)>, \\
\f{\partial E_1}{\partial c}(\tau,c,\epsilon)&=\integ{x}{-\infty}[0]<C_{xc}\left(x-\f{\tau}{\epsilon},c,\epsilon\right)\left(\integ{y}{x-\f{\tau}{\epsilon}}[x]<K(y)>\right)>, \numberthis \label{eq: E1_c}\\
\f{\partial E_1}{\partial \epsilon}(\tau,c,\epsilon)&=\integ{x}{-\infty}[0]<\f{\partial}{\partial \epsilon}\left[C_x\left(x-\f{\tau}{\epsilon},c,\epsilon\right)\left(\integ{y}{x-\f{\tau}{\epsilon}}[x]<K(y)>\right)\right]> \numberthis \label{eq: E1_eps} \\
&=\integ{x}{-\infty}[0]<\f{\tau}{\epsilon^2}C_{xx}\left(x-\f{\tau}{\epsilon},c,\epsilon\right)\left(\integ{y}{x-\f{\tau}{\epsilon}}[x]<K(y)>\right)>\\
&\phantom{=}+\integ{x}{-\infty}[0]<C_{x\epsilon}\left(x-\f{\tau}{\epsilon},c,\epsilon\right)\left(\integ{y}{x-\f{\tau}{\epsilon}}[x]<K(y)>\right)>\\
&\phantom{=}-\integ{x}{-\infty}[0]<C_x\left(x-\f{\tau}{\epsilon},c,\epsilon\right)\f{\tau}{\epsilon^2}K\left(x-\f{\tau}{\epsilon}\right)>.
\end{align*}
The integration bounds over $E_1$ being intentionally cut off to $x\leq 0$ allow us to bring limits inside the integrals. Therefore, we can easily establish that $DE_1$ is continuous.
\begin{proposition}\label{prop: E1_C1}
For $\epsilon=0$, define
\begin{align}
\f{\partial E_1}{\partial \tau}(\tau,c,0)&:=0, \\
\f{\partial E_1}{\partial c}(\tau,c,0)&:=0, \\
\f{\partial E_1}{\partial \epsilon}(\tau,c,0)&:=-\f{c}e^{-\f{(1+\gamma)\tau}{c}}\integ{x}{-\infty}[0]<|x|K(x)>. 
\end{align}
Then $\f{\partial E_1}{\partial \tau}$, $\f{\partial E_1}{\partial c}$, and $\f{\partial E_1}{\partial \epsilon}$ are continuous at $\epsilon=0$.
\end{proposition}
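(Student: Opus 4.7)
\medskip

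\noindent\textbf{Proof proposal.} The plan is to apply the dominated convergence theorem to each of the three partial derivatives in \eqref{eq: E1_tau}--\eqref{eq: E1_eps}, using the pointwise limits already established in \cref{prop: second_partials_lims} together with the exponential decay of $K$ from (H3). The outer $x$--integration is over the fixed domain $(-\infty,0]$, which is a crucial simplification: all integrands may be dominated by $\epsilon$--independent, $L^1$ majorants constructed from $|x|e^{\rho x}$ and $\int_{-\infty}^x |K(y)|\,dy$. So the main work is (i) producing these majorants and (ii) computing the resulting limit integrands.

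For $\partial E_1/\partial \tau$, both terms vanish. In the first term, \eqref{eq: Cxx_overeps_lim} gives $\epsilon^{-1}C_{xx}(x-\tau/\epsilon,c,\epsilon)\to 0$ pointwise, and an easy bound using $\omega_2=O(\epsilon)$, $1-\omega_1=O(\epsilon)$, together with $e^{\omega_i(x-\tau/\epsilon)/c}\leq e^{-\omega_i\tau/(c\epsilon)}$ for $x\leq 0$, shows $|\epsilon^{-1}C_{xx}|$ is uniformly bounded; the $K$--factor is dominated by $\int_{-\infty}^x|K(y)|\,dy$, which is integrable. In the second term, $|K(x-\tau/\epsilon)|\leq \alpha e^{-\rho\tau/\epsilon}e^{\rho x}$ for $x\leq 0$, so the whole integrand is $O(\epsilon^{-1}e^{-\rho\tau/\epsilon})e^{\rho x}$, which is uniformly dominated by a constant times $e^{\rho x}$ and tends to zero pointwise. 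For $\partial E_1/\partial c$, the same type of argument using \eqref{eq: Cxc_taueps_lim} (pointwise limit $0$) and uniform boundedness of $C_{xc}$ forces $\partial E_1/\partial c\to 0$.

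The substantive computation is for $\partial E_1/\partial \epsilon$, where three limits must combine to produce the stated formula. First, by \eqref{eq: Cxx_overepsq_lim}, the first summand in \eqref{eq: E1_eps} converges pointwise to
\[
-\f{(1+\gamma)\tau}{c^2}e^{-\f{(1+\gamma)\tau}{c}}\integ{y}{-\infty}[x]<K(y)>,
\]
since $\int_{x-\tau/\epsilon}^x K(y)\,dy\to \int_{-\infty}^x K(y)\,dy$. Second, by \eqref{eq: Cxeps_taueps_lim}, the $C_{x\epsilon}$ summand converges pointwise to $\frac{1}{c}\bigl(\frac{(1+\gamma)\tau}{c}-1\bigr)e^{-(1+\gamma)\tau/c}\int_{-\infty}^x K(y)\,dy$. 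Third, the $K(x-\tau/\epsilon)$ summand has the same exponentially small majorant $O(\epsilon^{-2}e^{-\rho\tau/\epsilon})e^{\rho x}$ as above and contributes $0$. Applying integration by parts to the $x$--integral,
\[
\integ{x}{-\infty}[0]<\integ{y}{-\infty}[x]<K(y)>>=\integ{x}{-\infty}[0]<|x|K(x)>,
\]
the surviving two limits combine to
\[
\left[-\f{(1+\gamma)\tau}{c^2}+\f{c}\left(\f{(1+\gamma)\tau}{c}-1\right)\right]e^{-\f{(1+\gamma)\tau}{c}}\integ{x}{-\infty}[0]<|x|K(x)>=-\f{c}e^{-\f{(1+\gamma)\tau}{c}}\integ{x}{-\infty}[0]<|x|K(x)>,
\]
matching the claimed value of $\partial E_1/\partial \epsilon(\tau,c,0)$.

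The main obstacle is verifying that the prefactor $(\tau/\epsilon^2)C_{xx}(x-\tau/\epsilon,c,\epsilon)$ in the first summand of \eqref{eq: E1_eps} has a genuine uniform ($x$--independent) $L^1$ majorant so that DCT applies: although it has a finite pointwise limit, the factor $\epsilon^{-2}$ could in principle blow up. The fix is direct: for $x\leq 0$ one has $e^{\omega_1(x-\tau/\epsilon)/c}\leq e^{-\omega_1\tau/(c\epsilon)}$, and the $\omega_1$--term in $C_{xx}$ is thus uniformly exponentially small (dominating any polynomial in $1/\epsilon$); the $\omega_2$--term carries the combination $\omega_2(1-\omega_1)/\epsilon^2=O(1)$ multiplied by $e^{-\omega_2\tau/(c\epsilon)}$, which is bounded since $\omega_2\geq 0$. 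This yields a uniform bound $|(\tau/\epsilon^2)C_{xx}(x-\tau/\epsilon,c,\epsilon)|\leq M(\tau,c)$, and the full integrand is then dominated by $M(\tau,c)\int_{-\infty}^x|K(y)|\,dy$, an integrable function on $(-\infty,0]$ by (H3). With this domination secured, DCT completes the proof, and continuity of the three partials at $\epsilon=0$ follows from continuity of their extended formulas in $(\tau,c)$.
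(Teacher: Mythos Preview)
Your proposal is correct and follows essentially the same approach as the paper: dominated convergence on the fixed domain $(-\infty,0]$, using the pointwise limits from \cref{prop: second_partials_lims} together with the exponential decay of $K$, and then the same integration-by-parts identity to simplify the $\partial E_1/\partial\epsilon$ limit. Your treatment of the $L^1$ majorant for $(\tau/\epsilon^2)C_{xx}$ is more explicit than the paper's, which simply asserts at the outset that the partials of $C_x$ are bounded and proceeds; otherwise the two arguments are step-for-step identical.
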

\begin{proof}
In \eqref{eq: E1_tau}, by \eqref{eq: Cxx_overeps_lim}, the integrand of the first integral converges to zero, and in the second integral, $\f{\epsilon}K\left(x-\f{\tau}{\epsilon}\right) \to 0$ by the exponential decay of $K$. Therefore,
$$
\lim_{\epsilon\to 0}\f{\partial E_1}{\partial \tau}(\tau,c,\epsilon) =0.
$$
In \eqref{eq: E1_c}, by \eqref{eq: Cxc_taueps_lim},
\begin{align*}
\lim_{\epsilon\to 0} \f{\partial E_1}{\partial c}(\tau,c,\epsilon) &=\integ{x}{-\infty}[0]<\lim_{\epsilon\to 0}C_{xc}\left(x-\f{\tau}{\epsilon},c,\epsilon\right)\left(\integ{y}{x-\f{\tau}{\epsilon}}[x]<K(y)>\right)>=0.
\end{align*}
In \eqref{eq: E1_eps}, we observe that $\f{\partial E_1}{\partial \epsilon}$ splits into three integrals. The last integral is $O\left(\f{\tau}{\epsilon^2}e^{-\f{\rho\tau}{\epsilon}}\right) \to 0$ due to the exponential decay of $K$. For the first two integrals, we use the limits obtained in \eqref{eq: Cxx_overepsq_lim} and \eqref{eq: Cxeps_taueps_lim}. Namely,
\begin{align*}
&\lim_{\epsilon\to 0}\integ{x}{-\infty}[0]<\f{\tau}{\epsilon^2}C_{xx}\left(x-\f{\tau}{\epsilon},c,\epsilon\right)\left(\integ{y}{x-\f{\tau}{\epsilon}}[x]<K(y)>\right)> \numberthis \label{eq: E1_eps_term1}\\
&=\integ{x}{-\infty}[0]<\lim_{\epsilon \to 0}\f{\tau}{\epsilon^2}C_{xx}\left(x-\f{\tau}{\epsilon},c,\epsilon\right)\left(\integ{y}{x-\f{\tau}{\epsilon}}[x]<K(y)>\right)>\\
&=-\f{(1+\gamma)\tau}{c^2}e^{-\f{(1+\gamma)\tau}{c}}\integ{x}{-\infty}[0]<\integ{y}{-\infty}[x]<K(y)>>
\end{align*}
and
\begin{align*}
&\lim_{\epsilon\to 0}\integ{x}{-\infty}[0]<C_{x\epsilon}\left(x-\f{\tau}{\epsilon},c,\epsilon\right)\left(\integ{y}{x-\f{\tau}{\epsilon}}[x]<K(y)>\right)> \numberthis \label{eq: E1_eps_term2}\\
&=\integ{x}{-\infty}[0]<\lim_{\epsilon \to 0}C_{x\epsilon}\left(x-\f{\tau}{\epsilon},c,\epsilon\right)\left(\integ{y}{x-\f{\tau}{\epsilon}}[x]<K(y)>\right)>\\
&=-\f{c}\left(1-\f{(1+\gamma)\tau}{c}\right)e^{-\f{(1+\gamma)\tau}{c}}\integ{x}{-\infty}[0]<\integ{y}{-\infty}[x]<K(y)>>.
\end{align*}
Adding the resulting terms from \eqref{eq: E1_eps_term1} and \eqref{eq: E1_eps_term2}, the limit
\begin{align*}
\lim_{\epsilon\to 0}\f{\partial E_1}{\partial \epsilon}(\tau,c,\epsilon)&=-\f{c}e^{-\f{(1+\gamma)\tau}{c}}\integ{x}{-\infty}[0]<\integ{y}{-\infty}[x]<K(y)>> \numberthis \\
&=-\f{c}e^{-\f{(1+\gamma)\tau}{c}}\integ{x}{-\infty}[0]<|x|K(x)>
\end{align*}
holds. Hence, the partial derivatives of $E_1$ may be continuously extended at $\epsilon=0$.
\end{proof}
\subsubsection*{The Continuous Extension of $DE_2$}
After a change of variable, the $x$ integral of $E_2$ has an integrand dominated by a constant multiple of $\integ{y}{-\infty}[x]<|K(y)|>$, which is integrable over $(-\infty,0]$. Hence, we may apply Leibniz rule and compute the following partial derivatives:
\begin{align*}
\f{\partial E_2}{\partial \tau}(\tau,c,\epsilon)&= \f{\partial}{\partial \tau}\left[\integ{x}{-\f{\tau}{\epsilon}}[0]<C_x(x,c,\epsilon)\left(\integ{y}{-\infty}[x]<K(y)>\right)>-\phi_f(c)\right] \numberthis \label{eq: E2_tau} \\
&=\f{\epsilon} C_x\left(-\f{\tau}{\epsilon},c,\epsilon\right)\integ{y}{-\infty}[-\f{\tau}{\epsilon}]<K(y)>, \\
\f{\partial E_2}{\partial c}(\tau,c,\epsilon)&=\integ{x}{-\f{\tau}{\epsilon}}[0]<C_{xc}(x,c,\epsilon)\left(\integ{y}{-\infty}[x]<K(y)>\right)>-\phi_f'(c), \numberthis \label{eq: E2_c} \\
\f{\partial E_2}{\partial \epsilon}(\tau,c,\epsilon)&=-\f{\tau}{\epsilon^2} C_x\left(-\f{\tau}{\epsilon},c,\epsilon\right)\integ{y}{-\infty}[-\f{\tau}{\epsilon}]<K(y)> \numberthis \label{eq: E2_eps} \\
&\phantom{=}+\integ{x}{-\f{\tau}{\epsilon}}[0]<C_{x\epsilon}(x,c,\epsilon)\left(\integ{y}{-\infty}[x]<K(y)>\right)>.
\end{align*}
\begin{proposition}\label{prop: E2_C1}
For $\epsilon=0$, define
\begin{align}
\f{\partial E_2}{\partial \tau}(\tau,c,0)&:=0, \\
\f{\partial E_2}{\partial c}(\tau,c,0)&:=0, \\
\f{\partial E_2}{\partial \epsilon}(\tau,c,0)&:=2\phi_f(c)-\f{c}\integ{x}{-\infty}[0]<|x|\left(e^{\f{x}{c}}+1\right)K(x)>. 
\end{align}
Then $\f{\partial E_2}{\partial \tau}$, $\f{\partial E_2}{\partial c}$, and $\f{\partial E_2}{\partial \epsilon}$ are continuous at $\epsilon=0$.
\end{proposition}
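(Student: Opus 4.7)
The plan is to argue, in parallel with \cref{prop: E1_C1}, that each of the three partial derivatives in \eqref{eq: E2_tau}--\eqref{eq: E2_eps} extends continuously to $\epsilon=0$ with the stated values. The two basic tools will be the exponential decay hypothesis (H3), which kills every term that carries an evaluation of $K$ at $-\f{\tau}{\epsilon}$, and the dominated convergence theorem applied after rewriting $\integ{x}{-\f{\tau}{\epsilon}}[0]{\cdot}=\integ{x}{-\infty}[0]{\cdot \,\chi_{(-\f{\tau}{\epsilon},0]}(x)}$, which lets us pass to the limit under the integral using the pointwise limits of $C_{xc}$ and $C_{x\epsilon}$ already established in \cref{prop: second_partials_lims}.

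For $\f{\partial E_2}{\partial \tau}$ I would note that $C_x(\cdot,c,\epsilon)$ is uniformly bounded on $(-\infty,0]$ (both $e^{\f{\omega_1 x}{c}}$ and $e^{\f{\omega_2 x}{c}}$ are $\leq 1$ there), while (H3) gives $\left|\integ{y}{-\infty}[-\f{\tau}{\epsilon}]<K(y)>\right|\leq \f{\alpha}{\rho}e^{-\f{\rho\tau}{\epsilon}}$, so the full expression is $O\!\left(\f{\epsilon}e^{-\f{\rho\tau}{\epsilon}}\right)\to 0$. This justifies the continuous extension by $0$.

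For $\f{\partial E_2}{\partial c}$, the integrand in \eqref{eq: E2_c} is dominated by a constant multiple of $\integ{y}{-\infty}[x]<|K(y)|>$, which is integrable over $(-\infty,0]$; combined with the pointwise limit $C_{xc}(x,c,\epsilon)\to -\f{c^2}\left(1+\f{x}{c}\right)e^{\f{x}{c}}$ from \eqref{eq: Cxc_lim} and with $\chi_{(-\f{\tau}{\epsilon},0]}(x)\to 1$, the dominated convergence theorem yields
$$
\lim_{\epsilon\to 0}\integ{x}{-\f{\tau}{\epsilon}}[0]<C_{xc}(x,c,\epsilon)\left(\integ{y}{-\infty}[x]<K(y)>\right)>=-\f{c^2}\integ{x}{-\infty}[0]<\left(1+\f{x}{c}\right)e^{\f{x}{c}}\left(\integ{y}{-\infty}[x]<K(y)>\right)>.
$$
Splitting the right-hand side into its two pieces and comparing with the two forms of $\phi_f'(c)$ in \eqref{eq: phip_def} shows this limit equals $\phi_f'(c)$ (this is the identical manipulation used for $f_c$ in the proof of \cref{lemma: Fpartial_cont}(i)), and subtracting $\phi_f'(c)$ gives the claimed value $0$.

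For $\f{\partial E_2}{\partial \epsilon}$, the boundary term is $O\!\left(\f{\tau}{\epsilon^2}e^{-\f{\rho\tau}{\epsilon}}\right)\to 0$ by the same decay argument used for $\f{\partial E_2}{\partial \tau}$. The remaining integral is again handled by dominated convergence: the integrand is dominated by a constant multiple of $\integ{y}{-\infty}[x]<|K(y)|>$, and the pointwise limit \eqref{Cxeps_lim} gives $C_{x\epsilon}(x,c,\epsilon)\to \f{c}\left[\left(1-\f{x}{c}\right)e^{\f{x}{c}}-1\right]$, so
$$
\lim_{\epsilon\to 0}\f{\partial E_2}{\partial \epsilon}(\tau,c,\epsilon)=\f{c}\integ{x}{-\infty}[0]<\left[\left(1-\f{x}{c}\right)e^{\f{x}{c}}-1\right]\left(\integ{y}{-\infty}[x]<K(y)>\right)>.
$$
This is precisely the expression appearing in \eqref{eq: feps_lim}, and the same integration-by-parts argument carried out there (using the two equivalent forms of $\phi_f'(c)$ to recognize $c\phi_f'(c)+\phi_f(c)$ after multiplying out) reduces it to $2\phi_f(c)-\f{c}\integ{x}{-\infty}[0]<|x|\left(e^{\f{x}{c}}+1\right)K(x)>$, matching the stated extension. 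The main (minor) obstacle is the bookkeeping in this last reduction, but since the identical manipulation has already been carried out explicitly for $f_\epsilon$, the proof reduces essentially to a citation of that calculation.
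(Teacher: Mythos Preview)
Your proposal is correct and follows essentially the same route as the paper: exponential decay of $K$ kills the boundary terms carrying $\int_{-\infty}^{-\tau/\epsilon}K$, and dominated convergence on $(-\infty,0]$ (via $\chi_{(-\f{\tau}{\epsilon},0]}$) together with the pointwise limits \eqref{eq: Cxc_lim} and \eqref{Cxeps_lim} handles the remaining integrals, with the final reduction to $2\phi_f(c)-\f{c}\int_{-\infty}^0|x|(e^{x/c}+1)K$ borrowed verbatim from the $f_\epsilon$ computation. The only cosmetic difference is that the paper bounds $\f{\epsilon}C_x\!\left(-\f{\tau}{\epsilon},c,\epsilon\right)=O(1)$ directly (so the boundary terms are $O(e^{-\rho\tau/\epsilon})$ and $O(\f{1}{\epsilon}e^{-\rho\tau/\epsilon})$), whereas you use the cruder $C_x=O(1)$ to get $O(\f{1}{\epsilon}e^{-\rho\tau/\epsilon})$ and $O(\f{\tau}{\epsilon^2}e^{-\rho\tau/\epsilon})$; both vanish, so this changes nothing.
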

\begin{proof}
By definition,
\begin{align*}
\f{\epsilon}C_x\left(-\f{\tau}{\epsilon},c,\epsilon)\right)&=\f{c\epsilon(\omega_1-\omega_2)}\left[(1-\omega_2)e^{-\f{\omega_1\tau}{c\epsilon}}-(1-\omega_1)e^{-\f{\omega_2\tau}{c\epsilon}}\right]\numberthis \label{eq: Cx_overeps_order} \\
&=O\left(\f{\epsilon}\right)\left[O(1)O\left(e^{-\f{\tau}{c\epsilon}}\right)-O(\epsilon)O(1)\right],
\end{align*}
which is $O(1)$. Due to the exponential bound on $K$, we have 
\begin{equation}
\integ{y}{-\infty}[-\f{\tau}{\epsilon}]<K(y)>=O\left(e^{-\f{\rho\tau}{\epsilon}}\right). \label{eq: K_int_order}
\end{equation}
It easily follows that 
\begin{equation}
\lim_{\epsilon\to 0}\f{\partial E_2}{\partial \tau}(\tau,c,\epsilon)=\lim_{\epsilon\to 0} O\left(e^{-\f{\rho\tau}{\epsilon}}\right)=0.
\end{equation} 
For the second limit,
\begin{align*}
\lim_{\epsilon\to 0} \f{\partial E_2}{\partial c}(\tau,c,\epsilon)&=\lim_{\epsilon\to 0}\integ{x}{-\f{\tau}{\epsilon}}[0]<C_{xc}(x,c,\epsilon)\left(\integ{y}{-\infty}[x]<K(y)>\right)>-\phi_f'(c) \\
&=-\f{c^2}\integ{x}{-\infty}[0]<\left(1+\f{x}{c}\right)e^{\f{x}{c}}\left(\integ{y}{-\infty}[x]<K(y)>\right)>-\phi_f'(c)=0.
\end{align*}
For the final limit, we first observe that as an immediate consequence of \eqref{eq: Cx_overeps_order} and \eqref{eq: K_int_order}, we have 
$$
-\f{\tau}{\epsilon^2} C_x\left(-\f{\tau}{\epsilon},c,\epsilon\right)\integ{y}{-\infty}[-\f{\tau}{\epsilon}]<K(y)>=O\left(\f{\epsilon}e^{-\f{\rho\tau}{\epsilon}}\right) \longrightarrow 0.
$$
Hence, the first term in \eqref{eq: E2_eps} vanishes as $\epsilon\to 0$, while the second term in \eqref{eq: E2_eps} converges to
\begin{align*}
&\phantom{=}\integ{x}{-\infty}[0]<\lim_{\epsilon\to 0}C_{x\epsilon}(x,c,\epsilon)\left(\integ{y}{-\infty}[x]<K(y)>\right)> \numberthis \\
&=\f{c}\integ{x}{-\infty}[0]<\left[\left(1-\f{x}{c}\right)e^{\f{x}{c}}-1\right]\left(\integ{y}{-\infty}[x]<K(y)>\right)> \\
&=2\phi_f(c)-\f{c}\integ{x}{-\infty}[0]<|x|\left(e^{\f{x}{c}}+1\right)K(x)>.
\end{align*}
\end{proof}
\subsubsection*{The Continuous Extension of $DE_3$}
Using Leibniz rule again, we may calculate the partial derivatives of $E_3$.
\begin{align*}
\f{\partial E_3}{\partial \tau}(\tau,c,\epsilon)&=\f{\epsilon}C_x(0,c,\epsilon)\integ{y}{\f{\tau}{\epsilon}}[\infty]<K(y)> \numberthis \label{eq: E3_tau} \\
&\phantom{=}-\f{\epsilon}\integ{x}{0}[\f{\tau}{\epsilon}]<C_{xx}\left(x-\f{\tau}{\epsilon},c,\epsilon\right)\left(\integ{y}{x}[\infty]<K(y)>\right)>,\\
\f{\partial E_3}{\partial c}(\tau,c,\epsilon)&=\integ{x}{0}[\f{\tau}{\epsilon}]<C_{xc}\left(x-\f{\tau}{\epsilon},c,\epsilon\right)\left(\integ{y}{x}[\infty]<K(y)>\right)>, \numberthis \label{eq: E3_c} \\
\f{\partial E_3}{\partial \epsilon}(\tau,c,\epsilon)&=-\f{\tau}{\epsilon^2}C_x(0,c,\epsilon)\integ{y}{\f{\tau}{\epsilon}}[\infty]<K(y)> \numberthis \label{eq: E3_eps} \\
&\phantom{=}+\f{\tau}{\epsilon^2}\integ{x}{0}[\f{\tau}{\epsilon}]<C_{xx}\left(x-\f{\tau}{\epsilon},c,\epsilon\right)\left(\integ{y}{x}[\infty]<K(y)>\right)> \\
&\phantom{=}+\integ{x}{0}[\f{\tau}{\epsilon}]<C_{x\epsilon}\left(x-\f{\tau}{\epsilon},c,\epsilon\right)\left(\integ{y}{x}[\infty]<K(y)>\right)>.
\end{align*}
\begin{proposition}\label{prop: E3_C1}
For $\epsilon=0$, define
\begin{align}
\f{\partial E_3}{\partial \tau}(\tau,c,0)&:=0, \\
\f{\partial E_3}{\partial c}(\tau,c,0)&:=0, \\
\f{\partial E_3}{\partial \epsilon}(\tau,c,0)&:=-\f{c}e^{-\f{(1+\gamma)\tau}{c}}\integ{x}{0}[\infty]<|x|K(x)>. 
\end{align}
Then $\f{\partial E_3}{\partial \tau}$, $\f{\partial E_3}{\partial c}$, and $\f{\partial E_3}{\partial \epsilon}$ are continuous at $\epsilon=0$.
\end{proposition}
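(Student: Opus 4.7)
The plan mirrors the strategy used for \cref{prop: E1_C1,prop: E2_C1} (the $E_1$ and $E_2$ analogues), adapted to the two structural features of $E_3$: the outer domain $[0,\tau/\epsilon]$ drifts with $\epsilon$, and the inner factor is the tail $\int_x^\infty K(y)\,dy$ rather than a shift integral. The unifying tool is (H3), which yields $\bigl|\int_z^\infty K(y)\,dy\bigr|\le(\alpha/\rho)e^{-\rho z}$ for $z\ge 0$.

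For $\partial E_3/\partial\tau$ I would dismiss the Leibniz boundary term $(c\epsilon)^{-1}\int_{\tau/\epsilon}^\infty K=O(\epsilon^{-1}e^{-\rho\tau/\epsilon})$ trivially (noting $C_x(0,c,\epsilon)=1/c$). For the interior term, after the change of variable $u=x-\tau/\epsilon$, I would split the domain at $u=-\tau/(2\epsilon)$: on $[-\tau/\epsilon,-\tau/(2\epsilon)]$ the tail factor $\int_{u+\tau/\epsilon}^\infty K$ is $O(e^{-\rho\tau/(2\epsilon)})$ while $\epsilon^{-1}|C_{xx}|=O(\epsilon^{-1})$; on $[-\tau/(2\epsilon),0]$ the bound $e^{\omega_1 u/c}\le e^{-\omega_1\tau/(2c\epsilon)}$ controls the first piece of $\epsilon^{-1}C_{xx}$, while the second piece carries an $O(\epsilon)$ coefficient since $\omega_2(1-\omega_1)=O(\epsilon^2)$. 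Both contributions are super-exponentially small.

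For $\partial E_3/\partial c$ I would make the same substitution and then set $v=u+\tau/\epsilon$, yielding $\int_0^{\tau/\epsilon}C_{xc}(v-\tau/\epsilon,c,\epsilon)\int_v^\infty K(y)\,dy\,dv$. The elementary bound $\sup_{s\le 0}|(1+s)e^{s}|=1$ together with the $O(\epsilon)$ size of the $(1-\omega_1)$-piece of $C_{xc}$ gives a uniform $|C_{xc}(v-\tau/\epsilon,c,\epsilon)|\le M$ on $v\in[0,\tau/\epsilon]$ for small $\epsilon$, so (after extending by zero) the integrand is dominated in $L^1([0,\infty))$ by $M\cdot(\alpha/\rho)e^{-\rho v}$. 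Pointwise convergence to $0$ follows from \eqref{eq: Cxc_taueps_lim}, and dominated convergence closes this case.

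The centerpiece is $\partial E_3/\partial\epsilon$. The boundary term $-(\tau/c\epsilon^2)\int_{\tau/\epsilon}^\infty K=O(\epsilon^{-2}e^{-\rho\tau/\epsilon})$ is negligible. For the two interior integrals I would combine the integrands and invoke \eqref{eq: Cxx_overepsq_lim} and \eqref{eq: Cxeps_taueps_lim}: for fixed $v$,
\begin{equation*}
\frac{\tau}{\epsilon^2}C_{xx}\!\left(v-\frac{\tau}{\epsilon},c,\epsilon\right)+C_{x\epsilon}\!\left(v-\frac{\tau}{\epsilon},c,\epsilon\right)\longrightarrow\left[-\frac{(1+\gamma)\tau}{c^2}+\frac{1}{c}\!\left(\frac{(1+\gamma)\tau}{c}-1\right)\right]e^{-\frac{(1+\gamma)\tau}{c}}=-\frac{1}{c}e^{-\frac{(1+\gamma)\tau}{c}},
\end{equation*}
an exact cancellation of the $(1+\gamma)\tau/c^2$ contributions. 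Dominated convergence --- with the $L^1$ bound supplied by the same split-domain estimate used for $\partial_\tau$ --- passes the limit inside, and Fubini via $\int_0^\infty\!\int_v^\infty K(y)\,dy\,dv=\int_0^\infty yK(y)\,dy=\int_0^\infty|x|K(x)\,dx$ produces the stated answer. The principal obstacle is precisely this uniform domination: neither $(\tau/\epsilon^2)C_{xx}(v-\tau/\epsilon,c,\epsilon)$ nor $C_{x\epsilon}(v-\tau/\epsilon,c,\epsilon)$ is bounded uniformly on $v\in[0,\tau/\epsilon]$, since near $v=\tau/\epsilon$ the exponentials $e^{\omega_i(v-\tau/\epsilon)/c}$ fail to cancel the $\omega_1/\epsilon^2$ or $\omega_1'/\epsilon$ prefactors; the split at $v=\tau/(2\epsilon)$ resolves this by trading the tail decay of $K$ on the right half against the exponential suppression of $C_{xx},C_{x\epsilon}$ on the left half.
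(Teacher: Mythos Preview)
Your strategy coincides with the paper's: treat the Leibniz boundary term as $O(\epsilon^{-k}e^{-\rho\tau/\epsilon})$, pass the limit inside the interior integrals using \eqref{eq: Cxx_overeps_lim}, \eqref{eq: Cxx_overepsq_lim}, \eqref{eq: Cxc_taueps_lim}, \eqref{eq: Cxeps_taueps_lim}, and combine via $\int_0^\infty\int_v^\infty K=\int_0^\infty yK(y)\,dy$. In fact you are more careful than the paper, which simply writes the limit inside the integral without addressing the lack of a uniform majorant; your observation that the $\omega_1$--pieces of $(\tau/\epsilon^2)C_{xx}$ and $C_{x\epsilon}$ blow up near the right endpoint is exactly the gap that a split--domain argument fills.

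However, your split is written with the two halves interchanged. After $u=x-\tau/\epsilon$, on $u\in[-\tau/\epsilon,-\tau/(2\epsilon)]$ one has $u\le -\tau/(2\epsilon)$, so it is here that $e^{\omega_1 u/c}\le e^{-\omega_1\tau/(2c\epsilon)}$ suppresses the first piece of $C_{xx}$ (and the second piece is $O(\epsilon)$ via $\omega_2(1-\omega_1)=O(\epsilon^2)$); the tail factor $\int_{u+\tau/\epsilon}^\infty K$ is \emph{not} small on this half, since $u+\tau/\epsilon\in[0,\tau/(2\epsilon)]$. Conversely, on $u\in[-\tau/(2\epsilon),0]$ one has $u+\tau/\epsilon\ge\tau/(2\epsilon)$, so the tail factor is $O(e^{-\rho\tau/(2\epsilon)})$ while $\epsilon^{-1}|C_{xx}|=O(\epsilon^{-1})$ and the half has length $\tau/(2\epsilon)$, giving a contribution $O(\epsilon^{-2}e^{-\rho\tau/(2\epsilon)})\to 0$. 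The same swap applies to the domination step you invoke for $\partial E_3/\partial\epsilon$. With the halves relabelled, your argument is complete and the cancellation and Fubini computations you give are correct.
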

\begin{proof}
In \eqref{eq: E3_tau}, for the first term, we have $\f{\epsilon}C_x(0,c,\epsilon)=O\left(\f{\epsilon}\right)$ and
$
\integ{y}{\f{\tau}{\epsilon}}[\infty]<K(y)>=O\left(e^{-\f{\rho\tau}{\epsilon}}\right)
$
so their product converges to zero. For the second term, by \eqref{eq: Cxx_overeps_lim}, in the limit,
$$
-\integ{x}{0}[\infty]<\underbrace{\lim_{\epsilon\to 0}\f{\epsilon}C_{xx}\left(x-\f{\tau}{\epsilon},c,\epsilon\right)}_{=0}\left(\integ{y}{x}[\infty]<K(y)>\right)>=0
$$
so combined, the result follows.

In \eqref{eq: E3_c}, by \eqref{eq: Cxc_taueps_lim},
\begin{align*}
\lim_{\epsilon\to 0} \f{\partial E_3}{\partial c}(\tau,c,\epsilon) &=\integ{x}{0}[\infty]<\underbrace{\lim_{\epsilon\to 0}C_{xc}\left(x-\f{\tau}{\epsilon},c,\epsilon\right)}_{=0}\left(\integ{y}{x}[\infty]<K(y)>\right)>=0.
\end{align*}
In \eqref{eq: E3_eps}, we observe that $\f{\partial E_3}{\partial \epsilon}$ splits into three terms. The first term is $O\left(\f{\epsilon^2}e^{-\f{\rho\tau}{\epsilon}}\right) \to 0$ due to the exponential decay of $\integ{y}{\f{\tau}{\epsilon}}[\infty]<K(y)>$. For the last two terms, we use the limits obtained in \eqref{eq: Cxx_overepsq_lim} and \eqref{eq: Cxeps_taueps_lim}. Namely,
\begin{align*}
&\lim_{\epsilon\to 0}\integ{x}{0}[\f{\tau}{\epsilon}]<\f{\tau}{\epsilon^2}C_{xx}\left(x-\f{\tau}{\epsilon},c,\epsilon\right)\left(\integ{y}{x}[\infty]<K(y)>\right)> \numberthis \label{eq: E3_eps_term1}\\
&=\integ{x}{0}[\infty]<\lim_{\epsilon \to 0}\f{\tau}{\epsilon^2}C_{xx}\left(x-\f{\tau}{\epsilon},c,\epsilon\right)\left(\integ{y}{x}[\infty]<K(y)>\right)>\\
&=-\f{(1+\gamma)\tau}{c^2}e^{-\f{(1+\gamma)\tau}{c}}\integ{x}{0}[\infty]<\integ{y}{x}[\infty]<K(y)>>
\end{align*}
and
\begin{align*}
&\lim_{\epsilon\to 0}\integ{x}{0}[\f{\tau}{\epsilon}]<C_{x\epsilon}\left(x-\f{\tau}{\epsilon},c,\epsilon\right)\left(\integ{y}{x}[\infty]<K(y)>\right)> \numberthis \label{eq: E3_eps_term2}\\
&=\integ{x}{0}[\infty]<\lim_{\epsilon \to 0}C_{x\epsilon}\left(x-\f{\tau}{\epsilon},c,\epsilon\right)\left(\integ{y}{x}[\infty]<K(y)>\right)>\\
&=-\f{c}\left(1-\f{(1+\gamma)\tau}{c}\right)e^{-\f{(1+\gamma)\tau}{c}}\integ{x}{0}[\infty]<\integ{y}{x}[\infty]<K(y)>>.
\end{align*}
Adding the resulting terms from \eqref{eq: E3_eps_term1} and \eqref{eq: E3_eps_term2}, the limit
\begin{align*}
\lim_{\epsilon\to 0}\f{\partial E_3}{\partial \epsilon}(\tau,c,\epsilon)&=-\f{c}e^{-\f{(1+\gamma)\tau}{c}}\integ{x}{0}[\infty]<\integ{y}{x}[\infty]<K(y)>> \numberthis \\
&=-\f{c}e^{-\f{(1+\gamma)\tau}{c}}\integ{x}{0}[\infty]<|x|K(x)>
\end{align*}
holds. Hence, the partial derivatives of $E_3$ may be continuously extended at $\epsilon=0$.
\end{proof}
The proof of \cref{lemma: Fpartial_cont} (ii) follows by combining the components of $Dg$.
\subsubsection*{Competing Interests}
The author declares that they have no competing interests.

\bibliographystyle{siam}
\begingroup
\setlength{\bibsep}{0pt}
%\setstretch{1}
%\bibliography{bibliography_cumulative}
\bibliography{Bibliography_arxiv_V2}
\endgroup
\end{document}